\newcommand{\ba}{\begin{array}}
\newcommand{\ea}{\end{array}}
\newcommand{\bea}{\begin{eqnarray}}
\newcommand{\eea}{\end{eqnarray}}
\newcommand{\bead}{\begin{eqnarray*}}
\newcommand{\eead}{\end{eqnarray*}}
\newcommand{\be}{\begin{equation}}
\newcommand{\ee}{\end{equation}}
\newcommand{\bed}{\begin{displaymath}}
\newcommand{\eed}{\end{displaymath}}
\newcommand{\bl}{\begin{lemma}}
\newcommand{\el}{\end{lemma}}
\newcommand{\bp}{\begin{proposition}}
\newcommand{\ep}{\end{propostion}}
\newcommand{\bt}{\begin{theorem}}
\newcommand{\et}{\end{theorem}}
\newcommand{\bc}{\begin{corollary}}
\newcommand{\ec}{\end{corollary}}
\newcommand{\br}{\begin{remark}}
\newcommand{\er}{\end{remark}}
\newcommand{\bd}{\begin{definition}}
\newcommand{\ed}{\end{definition}}
\newcommand{\bex}{\begin{example}}
\newcommand{\eex}{\end{example}}
\newcommand\R{{\mathbb{R}}}
\newcommand\C{{\mathbb{C}}}
\newcommand\N{{\mathbb{N}}}
\def\st{{\mathfrak t}}
      \def\dC{{\mathbb C}}
      \def\dR{{\mathbb R}}
   \def\cB{{\mathcal B}}   \def\cC{{\mathcal C}}
\def\cD{{\mathcal D}}      \def\cF{{\mathcal F}}
   \def\cH{{\mathcal H}}   
      \def\cL{{\mathcal L}}
\def\cS{{\mathcal S}}
\def\wt#1{{{\widetilde #1} }}
\def\wh#1{{{\widehat #1} }}
\def\bm\chi{\mbox{\boldmath$\chi$}}
\def\RE{{\rm Re\,}}
\def\IM{{\rm Im\,}}
\def\ran{{\rm ran\,}}
\def\dom{{\rm dom\,}}
\def\mul{{\rm mul\,}}
\def\cdom{{\rm \overline{dom}\,}}
\def\clos{{\rm clos\,}}
\def\dim{{\rm dim\,}}
\let\xker=\ker \def\ker{{\xker\,}}
\def\span{{\rm span\,}}
\def\cmr{{\dC \setminus \dR}}
\def\uphar{{\upharpoonright\,}}
\DeclareMathOperator{\hplus}{\, \widehat + \,}
\newtheorem{theorem}{Theorem}[section]
\newtheorem{proposition}[theorem]{Proposition}
\newtheorem{corollary}[theorem]{Corollary}
\newtheorem{lemma}[theorem]{Lemma}
\theoremstyle{definition}
\newtheorem{example}[theorem]{Example}
\newtheorem{remark}[theorem]{Remark}
\newtheorem{definition}[theorem]{Definition}
\numberwithin{equation}{section}
\begin{document}

\title[Nevanlinna families]
{Invariance theorems for Nevanlinna families}
\author{Vladimir Derkach}
\author{Seppo Hassi}
\author{Mark Malamud}


 \address{Department of Mathematics, National Pedagogical University,
Kiev, Pirogova 9, 01601, Ukraine}
\email{derkach.v@gmail.com}
\address{Department of Mathematics and Statistics \\
University of Vaasa \\
P.O. Box 700, 65101 Vaasa \\
Finland}
\email{sha@uwasa.fi}
\email{malamud3m@yahoo.com }


\subjclass[2000]{Primary 30E20, 47A07, 47A10, 47A56; Secondary
30C40, 30C80, 47B10, 47B44}

\keywords{Holomorhic operator-valued function, Herglotz-Nevanlinna
function, harmonic function, Harnack's inequality, maximum
principle, nonnegative kernel, sesquilinear form, spectrum,
resolvent, Schatten-von Neumann classes.}

\begin{abstract} A complex function $f(z)$ is called a Herglotz-Nevanlinna function if
it is holomorphic in the upper half-plane ${\mathbb C}_+$ and maps
${\mathbb C}_+$ into itself. By a maximum principle a
Herglotz-Nevanlinna function which takes a real value $a$ in a
single point $z_0\in {\mathbb C}_+$ should be identically equal to
$a$. In the present note we prove similar invariance results both
for the point and the continuous spectra of an operator-valued
Herglotz-Nevanlinna function with values in the set of bounded or
unbounded linear operators (or relations) in a Hilbert space. The
proof of this invariance result for continuous spectrum is based on
Harnack's inequality. This inequality is systematically used to
characterize operator-valued Herglotz-Nevanlinna functions with
form-domain invariance property for their imaginary parts or
Herglotz-Nevanlinna functions with values in the Schatten-von
Neumann classes.
\end{abstract}

\maketitle


\section{Introduction}
The class of Herglotz-Nevanlinna functions plays an important role
in function theory, probability theory, mathematical physics, etc.
In particular, the m-function of a Sturm-Liouville operator on a
half-line belongs to this class; \cite{Titch62}, \cite{Co}.
Similarly, the M-function of an elliptic operators is an
operator-valued Herglotz-Nevanlinna function; see \cite{AmrPear04}.
Also the Kre\u{\i}n's formula for (generalized) resolvents involves
an another source for various important applications of this class.
In particular, Kre\u{\i}n's formula allows to parametrize sets of
solutions of various classical interpolation and moment problems
with a parameter ranging over the class of Herglotz-Nevanlinna
families; cf. \cite{Kr46}, \cite{KL71}. For basic properties of
Herglotz-Nevanlinna functions see e.g. the surviews in~\cite{KacK},
\cite{Br}, \cite{Don74}.

The class $R[\cH]$ of Herglotz-Nevanlinna functions with values in the set $\cB(\cH)$
of bounded linear operators in a separable Hilbert space $\cH$ is defined as follows.
\begin{definition}
\label{def:Rfunc0} An operator-valued function  $F({z})$ holomorphic
on $ \cmr$, with values in $\cB(\cH)$ is said to belong to the class
$R[\cH]$, if:
\begin{enumerate}
\def\labelenumi{\rm (\roman{enumi})}
\item for every ${z} \in \dC_+ (\dC_-)$
      the operator $F({z})$ is  dissipative
      (resp. accumulative);
\item $F({z})^*=F(\bar {z})$, ${z} \in \cmr$;
\end{enumerate}
\end{definition}
In what follows an operator $T\in\cB(H)$ is called dissipative
      (resp. accumulative), if its  imaginary part
\[
\IM (T)=\frac{1}{2i}(T-T^*)
\]
is a nonnegative (resp. nonpositive) operator in $\cH$; cf.
\cite{Kato}, \cite{Ph0}.

Each operator-valued function $F\in R[\cH]$ admits the following integral
representation
\begin{equation}
\label{INTrep}
 F({z})
  = B_0 + B_1{z}+\int_{\dR}\left(\frac{1}{t-{z}}-\frac{t}{t^2+1}\right)\,
               d\Sigma(t),
\end{equation}
where $B_0= B_0^*\in\cB(\cH)$, $0\le B_1=B_1^*\in\cB(\cH)$, and
$\Sigma(\cdot)$ is a $\cB(\cH)$-valued operator  measure, such that
  \begin{equation}
\label{INT_cond} K_\Sigma :=
\int_{\dR}\,\frac{d\Sigma(t)}{t^2+1}\in\cB(\cH).
 \end{equation}
Here  integral in~\eqref{INT_cond} is uniformly convergent in
$\cB(\cH)$; cf. \cite{Br}, \cite{KacK},  \cite{GesTs00}.

The next result summarizes some invariance results on the spectra
properties of operator-valued functions $F\in R[\cH]$
(cf.~\cite[Proposition~1.2]{DM97}).

\begin{theorem}\label{prop:Inv}
Let $F\in R[\cH]$, $z_0\in\dC_+$ and $a=\bar a$. Then the following equivalences hold:
\begin{enumerate}
  \item [(1)] $0\in\sigma_p(\IM(F(z_0)))\Longleftrightarrow 0\in\sigma_p(\IM(F(z)))$ for all $z\in\dC_+$;
  \item [(2)] $0\in\sigma_c(\IM(F(z_0)))\Longleftrightarrow 0\in\sigma_c(\IM(F(z)))$ for all $z\in\dC_+$;
    \item [(3)] $0\in\rho(\IM(F(z_0)))\Longleftrightarrow 0\in\rho(\IM(F(z)))$ for all $z\in\dC_+$;
  \item [(4)] $a\in\sigma_p(F(z_0))\Longleftrightarrow a\in\sigma_p(F(z))$ for all $z\in\dC_+$;
  \item [(5)] $a\in\sigma_c(F(z_0))\Longleftrightarrow a\in\sigma_c(F(z))$ for all $z\in\dC_+$;
  \item [(6)]  $a\in\rho(F(z_0))\Longleftrightarrow a\in\rho(F(z))$ for all $z\in\dC_+$.
\end{enumerate}
\end{theorem}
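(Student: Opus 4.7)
The plan is to base everything on an operator-valued Harnack inequality. Comparing the Poisson-type kernels $\IM z/|t-z|^2$ and $\IM z_0/|t-z_0|^2$ pointwise in $t$ in the integral representation~\eqref{INTrep} yields, for every $z,z_0\in\dC_+$, positive scalar constants $c_1(z,z_0)\leq c_2(z,z_0)$ with
\begin{equation*}
 c_1(z,z_0)\,\IM F(z_0) \leq \IM F(z) \leq c_2(z,z_0)\,\IM F(z_0).
\end{equation*}
Condition (ii) of Definition~\ref{def:Rfunc0} makes $\IM F(z)$ bounded self-adjoint (and (i) makes it nonnegative on $\dC_+$), so this two-sided bound shows that both $\ker\IM F(z)$ and the bounded invertibility of $\IM F(z)$ are independent of $z$, giving (1) and (3). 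Since $\IM F(z)$ is self-adjoint its residual spectrum is empty, and (2) then follows from (1) and (3) (as $0\in\sigma_c$ precisely when $0\notin\sigma_p$ and $0\notin\rho$).

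For (4), if $F(z_0)x=ax$ with $a\in\dR$ and $x\ne 0$, then $(\IM F(z_0)x,x)=0$ forces $\IM F(z_0)x=0$, and by (1), $\IM F(z)x=0$ for every $z\in\dC_+$. Pairing~\eqref{INTrep} with $x$ and reading off the imaginary part gives $(B_1 x,x)=0$ and $\int d(\Sigma(t)x,x)/|t-z|^2=0$, so $B_1 x=0$ and the scalar measure $d(\Sigma(\cdot)x,x)$ vanishes identically. The Cauchy--Schwarz inequality
\begin{equation*}
 |(\Sigma(\Delta)x,y)|^2\leq (\Sigma(\Delta)x,x)\,(\Sigma(\Delta)y,y)
\end{equation*}
for the nonnegative operator measure $\Sigma$ then forces $\Sigma(\cdot)x\equiv 0$, so~\eqref{INTrep} collapses to $F(z)x = B_0 x$ on all of $\dC\setminus\dR$; combined with $F(z_0)x=ax$ this gives $F(z)x=ax$ everywhere.

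For (5) and (6) I would upgrade this argument to approximate eigenvalues. If $\|x_n\|=1$ and $(F(z_0)-a)x_n\to 0$, then $(\IM F(z_0)x_n,x_n)\to 0$, so by Harnack $(\IM F(z)x_n,x_n)\to 0$ for every $z\in\dC_+$. The identity
\begin{equation*}
 F(z)-F(z_0) = (z-z_0)\Bigl[B_1 + \int_\dR \frac{d\Sigma(t)}{(t-z)(t-z_0)}\Bigr],
\end{equation*}
combined with a Cauchy--Schwarz bound uniform in unit vectors $y$,
\begin{equation*}
 \Bigl|\int_\dR \frac{d(\Sigma(t)x_n,y)}{(t-z)(t-z_0)}\Bigr|^2 \leq \int\frac{d(\Sigma(t)y,y)}{|t-z|^2}\cdot \int\frac{d(\Sigma(t)x_n,x_n)}{|t-z_0|^2},
\end{equation*}
forces $(F(z)-F(z_0))x_n\to 0$ in norm and hence $(F(z)-a)x_n\to 0$. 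Thus the real approximate point spectrum of $F(z)$ is $z$-independent on $\dC_+$.

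Finally, $a\in\sigma(F(z_0))$ iff $a$ lies in the approximate point spectrum or in the residual spectrum of $F(z_0)$; the approximate part is $z$-invariant by the previous paragraph. For $a\in\sigma_r(F(z_0))$ (with $a\in\dR$), one has $a\in\sigma_p(F(z_0)^*)=\sigma_p(F(\bar z_0))$, and the proof of (4) applied at $\bar z_0\in\dC_-$ (the argument works verbatim in either half-plane, using~\eqref{INTrep} on $\dC\setminus\dR$) produces $y\ne 0$ with $F(w)y=ay$ for every $w\in\dC\setminus\dR$; hence $y\in\ker(F(z)^*-a)$ for every $z\in\dC_+$, and combined with $\ker(F(z)-a)=\{0\}$ from (4) this yields $a\in\sigma_r(F(z))$. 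Thus $\sigma$ is invariant, giving (6), and (5) follows from $\sigma_c=\sigma\setminus(\sigma_p\cup\sigma_r)$ together with the invariance of $\sigma$, $\sigma_p$, and $\sigma_r$. The most delicate step I foresee is the Cauchy--Schwarz estimate for vector integrals against the operator-valued measure $\Sigma$, which requires more care than its scalar-measure analogue.
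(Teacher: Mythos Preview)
Your argument is correct, but it follows a different path from the paper's primary proof. For (1)--(4) the paper exploits the nonnegativity of the Nevanlinna kernel ${\mathbf N}_F(z,w)=(F(z)-F(w)^*)/(z-\bar w)$: the $2\times 2$ Gram matrix argument kills the off-diagonal entry whenever the diagonal one vanishes, which immediately propagates $\ker\IM F$ and $\ker(F-a)$ from $z_0$ to all $z$. For (5)--(6) the paper passes to the Cayley transform $\cC(z)=(F(z)-i)(F(z)+i)^{-1}$ in the Schur class and proves the analogous invariance there (Harnack is invoked only at that stage, for $\RE[e^{-i\arg\alpha}(\alpha-\cC(z))]$). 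By contrast you work entirely inside the integral representation~\eqref{INTrep}: Harnack for the Poisson kernel gives the two-sided operator bound on $\IM F(z)$, and the Cauchy--Schwarz estimate for the operator measure $\Sigma$ handles the approximate eigenvectors. Your route for (5)--(6) is in fact the argument the paper develops later, in Section~4 (see the estimate \eqref{6.36} and Proposition~\ref{prop_continuous_spec}), for the broader class $R(\cH)$. The trade-off: the kernel/Cayley approach needs no integral representation and therefore extends verbatim to Nevanlinna families $\wt R(\cH)$ (Theorem~\ref{invar}), while your integral-representation method is more hands-on but tied to $R[\cH]$. Your caution about the Cauchy--Schwarz step for the $\cB(\cH)$-valued measure is warranted; the paper handles it exactly as you outline (cf.\ the derivation preceding \eqref{6.36}, and \cite{MalMal03}).
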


The following two subclasses of the class $R[\cH]$ appear in the
theory of $Q$-functions of symmetric operators, \cite{KL73}, and in
the boundary triplet approach to the extension theory of symmetric
operators, \cite{DM91,DM95}.
\begin{equation}\label{eq:R_su}
    \begin{split}
 R^s[\cH]&=\left\{F(\cdot) \in R[\cH]:\,\ker \IM F({z})=\{0\}\mbox{  for all
         }{z}\in\cmr\,\right\};\\
 R^u[\cH]&=\left\{F(\cdot) \in R^s[\cH]:\,0 \in \rho(\IM F({z}))\mbox{  for all
         }{z}\in\cmr\,\right\}.
\end{split}
\end{equation}

It follows from Theorem~\ref{prop:Inv} that each of the subclasses $R^s[\cH]$ and
$R^u[\cH]$ can be single out  by a single condition:
\begin{equation}\label{eq:R_su2}
    \begin{split}
 R^s[\cH]&=\left\{F \in R[\cH]:\,\ker \IM F(i)=\{0\}\,\right\};\\
 R^u[\cH]&=\left\{F \in R^s[\cH]:\,0 \in \rho(\IM F(i))\,\right\}.
\end{split}
\end{equation}
The classes  $R^u[\cH]$, $R^s[\cH]$, and $R[\cH]$ are ordered by
inclusion
\begin{equation}
R^u[\cH] \subset R^s[\cH] \subset R[\cH].
\end{equation}

It follows from~\eqref{eq:R_su2} and Theorem~\ref{prop:Inv} that the
operator function $F(z)$ with the integral
representation~\eqref{INTrep} belongs to the class $R^s[\cH]$ (or
$R^u[\cH]$), if $0\not\in\sigma_p(\Sigma_0)$ ($0\in\rho(\Sigma_0)$,
respectively).

Recall that every Nevanlinna family can be realized (uniquely up to unitary equivalence)
as a Weyl function (or Weyl family) of a (minimal unitary) boundary relation; see
\cite{DHMS06}, \cite[Theorem~3.9]{DHMS12}. Various other subclasses of the class $\wt
R(\cH)$ appearing above and below can  be characterized in boundary triplet and boundary
relation context. 
Each property of $R$-function in Theorem~1.2 when treated as a Weyl function of a
(generalized) boundary triplet has its geometrical counterpart. For
instance,  the class of $R^u[\cH]$-functions  is known to characterize  the class of Weyl functions
corresponding to ordinary boundary triplets of $A^*$, where $A$ is a
not necessarily densely defined symmetric operator in $\mathfrak H$
(see \cite{KL73}, \cite{DM91,DM95}).
 The class $ R^s[\cH]$  gives a  precise characterization of
 Weyl functions of 
symmetric operators, corresponding to the so-called $B$-generalized boundary triplets  (see \cite{DM95}, \cite{DHMS06}).

The paper is organized as follows. For later purposes a proof of
items (1) -- (4) in Theorem~\ref{prop:Inv} will be presented in
Section~2, while items (5) and (6) will be treated in Section~3,
where all of these invariance results are extended to the class
$R(\cH)$ of Herglotz-Nevanlinna functions with values in the set
$\cC(\cH)$ of closed linear operators and to the class $\wt R(\cH)$
of Nevanlinna families. The proof of the first half of these
invariance results is based on the maximum principle for
Herglotz-Nevanlinna functions or alternatively for contractive
holomorphic operator functions. The rest is proven then with the
help of the Harnack's inequality for harmonic functions. It is
emphasized that no realization results via operator or functional
models, or boundary triplets methods, for functions from these
classes of operator functions are involved in the given arguments.

Harnack's inequality is systematically used in Section~4 to
characterize invariance properties of operator-valued harmonic
functions as well as for Herglotz-Nevanlinna functions whose
imaginary parts have a so-called form-domain invariance property and
for Herglotz-Nevanlinna functions with values in the Schatten-von
Neumann classes. For instance, by applying such analytic arguments
it is shown (see Proposition \ref{prop6.2}) that under certain
additional assumptions a Herglotz-Nevanlinna function $F(\cdot)$
whose imaginary part is bounded at one point admits a representation
     \begin{equation}\label{6.9B}
F(z) =  G(z) + T, \quad z\in \C_+, 
     \end{equation}
where $G(\cdot)\in R[\cH]$ and   $T=T^*\in \cC(\cH)$ if and only if $F_I(z_0)\in
\cB(\cH)$ for some $z_0\in\mathbb C_+$. Functions of the form  \eqref{6.9B} with
$G(\cdot)\in R^s[\cH]$ characterize Weyl functions of generalized boundary triplets with
a selfadjoint operator $A_0= A^*\lceil \ker\Gamma_0$; see \cite[Section~4]{DHMS06} ,
\cite[Theorem 7.39]{DHMS12}. Similar functions appear also in a  connection with the
so-called quasi-boundary triplets introduced and investigated  in \cite{BeLa07}. In
fact, an arbitrary function of the form,
\[
 F(z)=T+F_0(z), \quad z\in \cmr,
\]
where $T$ is a symmetric densely defined operator (not necessarily self-adjoint or even
closed) on $\cH$ and $F_0(\cdot)$ belongs to the class $R[\cH]$. Such $F(\cdot)$ appears
as a Weyl function of a so-called almost $B$-generalized boundary triplet (possibly
multi-valued); a concept that is originated  in a forthcoming paper \cite{DHM15} by the
authors. Such functions play a central role in the study of form-domain invariant
Nevanlinna families (see Definition \ref{Nevforminv} below). Characteristic properties
of such functions are investigated in \cite{DHM15} within boundary triplet (and boundary
relation) setting: in particular, it is shown therein that a Herglotz-Nevanlinna
function having a form-domain invariant imaginary part can be realized as the Weyl
function of a unitary boundary triplet (boundary relation) $\{\cH,\Gamma_0,\Gamma_1\}$
with an essentially selfadjoint kernel $A_0=\ker \Gamma_0$. Such functions appear in
applications, e.g. in the study of local point interactions, \cite{KosMMM,MN2012}, in
PDE setting as M-functions, Dirichlet-to-Neumann maps, and their analogs; see e.g.
\cite[Section 7.7]{Post12} for a treatment of the Zaremba problem.

Finally, in Section~5 we present several examples which reflect various invariance
properties of associated Herglotz-Nevanlinna functions and stability properties of
quadratic forms generated by the imaginary parts of such functions.

\section{Preliminaries}

\subsection{Linear relations in Hilbert spaces}
Let $\cH$ be a separable Hilbert space.
The set of bounded (closed) linear operators in $\cH$ is denoted by $\cB(\cH)$,
($\cC(\cH)$, respectively).

Recall that a linear relation $T$ in $\cH$ is a linear subspace of
$\cH \times \cH$. Systematically a linear operator $T$ will be
identified with its graph. The set of closed linear relations in
$\cH$ is denoted by $\wt \cC(\cH)$. It is convenient to  interpret
the linear relation $T$ as a multi-valued linear mapping from $\cH$
into $\cH$. For a linear relation $T\in\wt \cC(\cH)$ the symbols
$\dom T$, $\ker T$, $\ran T$, and $\mul T$ stand for the domain,
kernel, range, and the multi-valued part, respectively. The inverse
$T^{-1}$ is a linear relation in $\cH$ defined by
$\{\,\{f',f\}:\,\{f,f'\}\in T\,\}$. The adjoint $T^*$ is the closed
linear relation from $\cH$ defined by (see~\cite{AI}, \cite{Ben},
\cite{Co})
\[
 T^*=\{\,\{h,k\} \in \cH \oplus \cH :\,
     (k,f)_{\cH}=(h,g)_{\cH}, \, \{f,g\}\in T \,\}.
\]
The sum $T_1+T_2$ and the componentwise sum
$T_1 \wh + T_2$ of two linear relations
$T_1$ and $T_2$ are defined by
\[
\begin{split}
  &T_1+T_2=\{\,\{f, g+h\} :\, \{f,g\} \in T_1, \{f,h\} \in T_2\,\}, \\
  &T_1 \hplus T_2=\{\, \{f+h,g+k\} :\, \{f,g\} \in T_1, \{h,k\}\in
T_2\,\}.
\end{split}
\]
If the componentwise sum is orthogonal it will be denoted by
$T_1 \oplus T_2$.
Moreover, $\rho(T)$ ($\hat\rho(T)$) stands for the set of regular
(regular type) points of $T$. The closure of a linear relation $T$
will be denoted by $\clos T$.

 Recall that a linear relation $T$ in $\cH$ is called
\textit{symmetric} (\textit{dissipative} or \textit{accumulative})
if $\mbox{Im }(h',h)=0$ ($\ge 0$) or $\le 0$, respectively) for all
$\{h,h'\}\in T$. These properties remain invariant under closures.
By polarization it follows that a linear relation $T$ in $\cH$ is
symmetric if and only if $T \subset T^*$. A linear relation $T$ in
$\cH$ is called \textit{selfadjoint} if $T=T^*$, and it is called
\textit{essentially selfadjoint} if $\clos T=T^*$. A dissipative
(accumulative) linear relation $T$ in $\cH$ is called maximal
dissipative (maximal accumulative) if it has no proper dissipative
(accumulative) extensions.

Assume that $T$ is closed. If $T$ is dissipative or accumulative,
then $\mul T \subset \mul T^*$.
In this case the orthogonal decomposition
$\cH=(\mul T)^\perp \oplus \mul T$
induces an orthogonal decomposition of $T$ as
\[
 T=T_s \oplus T_\infty, \quad T_\infty=\{0\} \times \mul T, \quad
 T_s=\{\, \{f,g\} \in T:\, g \perp \mul T\,\},
\]
where $T_\infty$ is a selfadjoint relation in $\mul T$
and $T_s$ is an operator in $\cH \ominus \mul T$
with $\cdom T_s=\cdom T=(\mul T^*)^\perp$,
which is dissipative or accumulative.
Moreover, if the relation $T$ is maximal
dissipative or accumulative, then $\mul T=\mul T^*$.
In this case the orthogonal decomposition
$(\dom T)^\perp=\mul T^*$ shows that $T_s$
is a densely defined dissipative or accumulative
operator in $(\mul T)^\perp$, which is maximal
(as an operator); see e.g. \cite[Sec.~3,~Cor.~4.16 ]{HdSSz2009}.
In particular, if $T$ is a selfadjoint relation,
then there is such a decomposition where $T_s$
is a selfadjoint operator
(densely defined in $(\mul T)^\perp$).

%
%

\subsection{Nevanlinna families}

\begin{definition}
A family of linear relations $\cF({z})$, ${z} \in \cmr$, in
a Hilbert space $\cH$ is called a \textit{Nevanlinna family} if:
\begin{enumerate}
\item[(NF1)] for every ${z} \in \dC_+ (\dC_-)$
      the relation $\cF({z})$ is maximal dissipative
      (resp. accumulative);
\item[(NF2)] $\cF({z})^*=\cF(\bar {z})$, ${z} \in \cmr$;
\item[(NF3)] for some (and hence for all) ${w} \in \dC_+ (\dC_-)$ the
      operator family $(\cF({z})+{w})^{-1} (\in [\cH])$ is
      holomorphic in ${z} \in \dC_+ (\dC_-)$.
\end{enumerate}
\end{definition}


The condition $(\cF({z})+{w})^{-1} (\in [\cH])$ implies that $\cF({z})$ is maximal dissipative
      or accumulative relation $\cF({z})$, ${z}
\in \cmr$, and thus, in particular, closed. The \textit{class of all
Nevanlinna families} in a Hilbert space is denoted by $\wt
R(\cH)$. If the multi-valued part $\mul \cF({z})$ of $\cF(\cdot)
\in \wt R(\cH)$ is nontrivial, then it is independent of ${z}
\in \cmr$, so that
\begin{equation} \label{ml}
 \cF({z})=\cF_s({z}) \oplus \cF_\infty, \quad \cF_\infty=\{0\}
 \times \mul \cF({z}), \quad {z}\in\dC\setminus\dR,
\end{equation}
where $\cF_s({z})$ is a Nevanlinna family of single-valued linear relations in $\cH \ominus \mul F({z})$, \cite{KL71}.

Clearly, if $\cF(\cdot) \in \wt R(\cH)$, then $\cF_\infty \subset
\cF({z}) \cap \cF({z})^*$ for all ${z}\in\dC\setminus\dR$.

\subsection{Class $R(\cH)$}
Class $R(\cH)$ is defined below as a class of all single-valued Nevanlinna families.
\begin{definition}
\label{def:Rfunc}
An operator-valued function  $F({z})$, ${z} \in \cmr$, with values in $\cC(H)$ is said to belong to the class $R(\cH)$, if:
\begin{enumerate}
\item[(NF1)] for every ${z} \in \dC_+ (\dC_-)$
      the operator $F({z})$ is maximal dissipative (resp. accumulative);
\item[(NF2)] $F({z})^*=F(\bar {z})$, ${z} \in \cmr$;
\item[(NF3)] for some (and hence for all) ${w} \in \dC_+ (\dC_-)$ the
      operator function $(F({z})+{w})^{-1} (\in [\cH])$ is
      holomorphic in ${z} \in \dC_+ (\dC_-)$.
\end{enumerate}
\end{definition}

The following subclass
\[
R[\cH]=\left\{F(\cdot) \in  R(\cH):\,\dom F({z})=\cH\mbox{  for all }{z}\in\cmr\,\right\}
\]
of the class $R(\cH)$ consist of operator-valued functions $F({z})$ with values in $\cB(\cH)$.

Analogous to~\eqref{eq:R_su}, the following subclasses of the class $R(\cH)$ can be determined
\[
\begin{split}
 R^s(\cH)&=\left\{F(\cdot) \in  R(\cH):\ker(\IM F({z})) =\{0\}\mbox{ for all
         }{z}\in\cmr\,\right\};\\
 R^u(\cH)&=\left\{F(\cdot) \in R(\cH):\,\cF({z}) \hplus \cF({z})^*=\cH^2\mbox{  for all
         }{z}\in\cmr\,\right\},\\
\end{split}
\]
where $\cF(z)$ is the graph of the linear operator $F(z)$.
As will be shown in Proposition~\ref{Runif+} (see also~\cite[Proposition~2.18]{DHMS06}) the classes  $R^u(\cH)$ and $R^u[\cH]$ coincide.
 The
Nevanlinna functions in $R^s(\cH)$ and $R^u[\cH]$ will be called
\textit{strict} and \textit{uniformly strict}, respectively.

These definitions
give rise to the following chain of inclusions:
\begin{equation}
\label{scheme}
R^u(\cH) \subset R^s(\cH) \subset R(\cH)  \subset \wt R(\cH).
\end{equation}
In the infinite-dimensional situation each of the inclusions in~\eqref{scheme}
is strict.

It follows from the integral representation~\eqref{INTrep} for every $F\in R[\cH]$ the kernel
\begin{equation}\label{eq:Kern_F}
    {\mathbf N}_F(z,w):=\left\{\begin{array}{cc}
                          {\displaystyle\frac{F(z)-F( w)^*}{z-\bar w}}, & \mbox{if }\, z\ne\bar w;  \\
                          F'(z) &  \mbox{if }\, z=\bar w.
                        \end{array}\right.
\end{equation}
is nonnegative in $\dC_+\cup\dC_-$. This observation leads to a couple of the invariance results in
the class $R[\cH]$, which were stated in Theorem~\ref{prop:Inv} and can be considered to be well known.
For the convenience of the reader, a short proof for items (1)-(4) is given;
the rest will follow from a more general Theorem~\ref{invar} given below.

\begin{proof}[Proof of Theorem~\ref{prop:Inv} (1)-(4)]
(1) Assume that $0\in\sigma_p(\IM(F(z_0)))$ and $\IM(F(z_0))h_0=0$ for some $h_0\in\cH$, $h_0\ne 0$.
The matrix
\begin{equation}\label{eq:Im_F0}
 \left(
      \begin{array}{cc}
        \left({\mathbf N}_F(z_0,z_0)h_0,h_0\right) & \left({\mathbf N}_F(z_0,z)h_0,h_0\right) \\
        \left({\mathbf N}_F(z_0,z_0)h_0,h\right) & \left({\mathbf N}_F(z,z)h_0,h_0\right) \\
      \end{array}
    \right)
 \end{equation}
is nonnegative for all $z\in\dC_+\cup\dC_-$, $z\ne \bar z_0$. Since
the left-upper corner of this matrix vanishes, then also
$\left({\mathbf N}_F(z,z_0)h_0,h_0\right)=0$ and hence
\begin{equation}\label{eq:Im_F2}
(F(z)h_0,h_0)=(F(z_0)^*h_0,h_0)=(h_0,F(z_0)h_0) \quad\mbox{for
all}\quad z\in\dC_+\cup\dC_-\,\,(z\ne \bar z_0).
 \end{equation}
Hence, $(\IM(F(z))h_0,h_0)=0$ and since $\IM(F(z))\geq 0$ (or $\le
0)$ this gives $\IM(F(z))h_0=0$.

(2) Assume that $0\in\sigma(\IM(F(z_0)))$ and $\IM(F(z_0))h_n\to 0$
as $n\to\infty$ for some sequence $h_n\in\cH$, such that
$\|h_n\|=1$. Then for all $z\in\dC_+\cup\dC_-$, $z\ne \bar z_0$,
\[
  \left|({\mathbf N}_F(z,z_0)h_n,h_n)\right|^2\le  \left({\mathbf N}_F(z_0,z_0)h_n,h_n\right)
  \left({\mathbf N}_F(z,z)h_n,h_n\right)\to 0 \quad (n\to\infty).
\]
This implies that $(\IM(F(z))h_n,h_n)\to 0$. Hence,
$\|(\IM(F(z))^{1/2}h_n\|^2\to 0$ (if e.g. $\IM z>0$) and therefore
also $\IM(F(z))h_n\to 0$ as $n\to\infty$.

(3) This statement is implied by (1) and (2).

(4) Assume that $F(z_0)h_0=ah_0$ for some $h_0\in\cH$, $h_0\ne 0$.
Then the left--upper corner of the matrix in~\eqref{eq:Im_F0} equals
to 0 and therefore~\eqref{eq:Im_F2} holds. Hence one obtains
$F(z)h_0=ah_0$ for all $z\in\cmr$.

The proof of (5) and (6) is postponed until Theorem~\ref{invar} (v), (vi).
\end{proof}

\section{Nevanlinna pairs}
In abstract eigenvalue depending boundary value problems Nevanlinna
family is often represented via its counterpart -- a Nevanlinna pair,
see e.g. \cite{DM95}, \cite{CDR}, \cite{DHMS1}. In this section
connections between Nevanlinna families and Nevanlinna pairs are
investigated in the general Hilbert space setting.

Every closed linear relation $T$ in a separable Hilbert space $\cH$
can be represented as
\begin{equation}
\label{tau00}
 T=\{\,\{\Phi h,\Psi h\}:\, h\in\cL \,\},
\end{equation}
where $\cL$ is a parameter Hilbert space and the operators $\Phi$,
$\Psi$ belong to $[\cL,\cH]$. To show this it is enough to take $T$
as $\cL$ and the projections $\pi_1$, $\pi_2$ onto the first and the
second components of $T\subset\cH\times\cH$ as $\Phi$ and $\Psi$.
Clearly, each pair $\{\Phi,\Psi\}$ of operators in $[\cL,\cH]$ gives
rise to a linear relation $T$ in $\cH$ via~\eqref{tau00}. In the
infinite-dimensional case ($\dim\cH=\infty$) the parameter Hilbert
space $\cL$ can be taken to be equal to $\cH$. Note that when
$\rho(T)$ is not empty and ${z}_0 \in \rho(T)$ then
\[
 T=\{\,\{(T-{z}_0)^{-1}h, (I+{z}_0(T-{z}_0)^{-1})h\} :\,
     h \in \cH\,\},
\]
so that $\cL=\cH$ and there is a natural choice for the pair
$\{\Phi, \Psi\}$ in $\cB(\cH)$.

For linear relations given by the equation~\eqref{tau00} its
properties 
can be characterized
in terms of the pair $\{\Phi, \Psi\}$.

\begin{proposition}[\cite{DHMS1}]\label{CRIT}
Let $T$ be a linear relation $T$ in $\cH$, defined by~\eqref{tau00}.
Then:
 \begin{enumerate}
\def\labelenumi{\rm (\roman{enumi})}
\item
the adjoint $T^*$ is a linear relation given by
\begin{equation}
\label{tau01}
 T^*=\{\,\{h,h'\}\in\cH^2:\, \Psi^*h-\Phi^*h'=0 \,\}.
\end{equation}
\item
$T$ is a dissipative  (accumulative) relation if and only if
\begin{equation}
\label{Acor1}
 -i(\Phi^*\Psi- \Psi^*\Phi)\ge 0, \quad ( \le 0);
\end{equation}
\item
$T$ is symmetric if and only if
\begin{equation}
\label{cor2}
 \Phi^*\Psi- \Psi^*\Phi=0;
\end{equation}
\end{enumerate}
If, additionally, $\ker \Phi\cap\ker\Psi=\{0\}$, then
 \begin{enumerate}
\def\labelenumi{\rm (\roman{enumi})}
\item[(iv)]
${z}\in\rho(T)$ if and only if the operator $\Psi-{z}\Phi$
has a bounded inverse;
\item[(v)]
$T$ is  maximal dissipative (accumulative) if and only
if~\eqref{Acor1} holds and the operator $\Psi+i \Phi$ ($\Psi-i
\Phi$) has a bounded inverse;
\item[(vi)]
$T$ is selfadjoint  if and only if~\eqref{cor2} holds and the
operators $\Psi\pm i \Phi$  have bounded inverses.
\end{enumerate}
\end{proposition}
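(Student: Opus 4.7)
My plan is to translate each item of the Proposition into a direct computation involving the operators $\Phi$ and $\Psi$.

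For (i), starting from the definition of the adjoint, the pair $\{h,h'\}$ lies in $T^*$ iff $(h',\Phi g)_\cH = (h,\Psi g)_\cH$ for every $g\in\cL$, and moving the adjoints across immediately gives $\Psi^*h - \Phi^*h' = 0$. For (ii) and (iii), I would take an arbitrary element $\{f,f'\} = \{\Phi h,\Psi h\}\in T$ and compute
\begin{equation*}
\IM(f',f) = \frac{1}{2i}\bigl[(\Psi h,\Phi h) - (\Phi h,\Psi h)\bigr] = \frac{1}{2}\bigl(-i(\Phi^*\Psi - \Psi^*\Phi)h,h\bigr)_\cL.
\end{equation*}
Since $\Phi^*\Psi - \Psi^*\Phi$ is skew-adjoint, the operator $-i(\Phi^*\Psi - \Psi^*\Phi)$ is selfadjoint; the sign of $\IM(f',f)$ over all $\{f,f'\}\in T$ then translates to the semidefiniteness statement in (ii), and its vanishing, via polarization, to the identity in (iii).

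For (iv), I would write
\begin{equation*}
T - z = \{\{\Phi h,(\Psi - z\Phi)h\}: h\in\cL\}, \qquad (T - z)^{-1} = \{\{(\Psi - z\Phi)h,\Phi h\}: h\in\cL\}.
\end{equation*}
The assumption $\ker\Phi\cap\ker\Psi = \{0\}$ immediately yields $\ker\Phi\cap\ker(\Psi - z\Phi) = \{0\}$ for every $z\in\dC$, because $\Phi h = 0$ and $(\Psi - z\Phi)h = 0$ jointly force $\Psi h = 0$. Hence $(T - z)^{-1}$ is single-valued iff $\ker(\Psi - z\Phi) = \{0\}$, and $\ran(T - z) = \ran(\Psi - z\Phi)$. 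Thus $z\in\rho(T)$ amounts to $\Psi - z\Phi$ being a bijection $\cL \to \cH$, with boundedness of the inverse following from the open mapping theorem; the resolvent then is the bounded operator $\Phi(\Psi - z\Phi)^{-1}$.

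For (v), I would combine (ii) with the standard fact that a dissipative linear relation $T$ is maximal dissipative iff $-i\in\rho(T)$ (equivalently $\ran(T + i) = \cH$) and apply (iv) at $z = -i$, where $\Psi - (-i)\Phi = \Psi + i\Phi$; the accumulative case is entirely symmetric. For (vi), I would invoke the criterion that a symmetric relation is selfadjoint iff $\pm i\in\rho(T)$, and combine (iii) with (iv) at $z = \pm i$.

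The main care is required in item (iv), where the hypothesis $\ker\Phi\cap\ker\Psi = \{0\}$ is precisely what is needed to transfer invertibility of $\Psi - z\Phi$ back to invertibility of $T - z$; without it, $\Phi$ could collapse nonzero vectors in $\ker(\Psi - z\Phi)$ and destroy the natural bijection between $\ker(T - z)$ and $\ker(\Psi - z\Phi)$. Once (iv) is in place, items (v) and (vi) reduce directly to the standard resolvent-based characterizations of maximal dissipativity and selfadjointness, so no further subtleties arise.
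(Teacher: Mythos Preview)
Your proposal is correct and follows essentially the same approach as the paper's own proof: direct verification of (i)--(iii) from the definitions, the representation $T-z=\{\{\Phi h,(\Psi-z\Phi)h\}:h\in\cL\}$ together with $\ker\Phi\cap\ker\Psi=\{0\}$ for (iv), and then (v), (vi) as immediate consequences of (ii), (iii), (iv) via the resolvent characterizations of maximal dissipativity and selfadjointness. The paper's write-up is slightly terser in (iv) (it does not spell out the open mapping step), but the substance is identical.
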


\begin{proof}
(i) For $\{g,g'\}\in T^*$ and arbitrary $h\in \cH$ one has the
equality
\[
0=(g',\Phi h)-(g,\Psi h)=(\Phi^*g'-\Psi^*g,h),
\]
which implies $\Psi^*g-\Phi^*g'=0$.

(ii), (iii) If $T$ is symmetric then for $\{\Phi h,\Psi h\}\in T$
one obtains
\[
0=-i[(\Psi h,\Phi h)-(\Phi h,\Psi h)] =-i( (\Phi^*\Psi-
\Psi^*\Phi)h,h), \quad h\in\cH,
\]
and conversely. Similarly, one obtains the conditions~\eqref{Acor1}
for dissipative and accumulative linear relations.

(iv) It follows from\eqref{tau00} that
\begin{equation}
\label{Tinv} T-{z}=\{\,\{\Phi h,(\Psi-{z}\Phi) h\}:\,
h\in\cL \,\},
\end{equation}
Assume that ${z}\in\rho(T)$ and $(\Psi-{z}\Phi) h=0$. Then
$\Psi h=\Phi h=0$ and, hence, $h=0$ (by the assumption $\ker
\Phi\cap\ker\Psi=\{0\}$). Since $\ran
(\Psi-{z}\Phi)=\ran(T-{z})=\cH$, it follows
$0\in\rho(\Psi-{z}\Phi)$. Similarly, if
$0\in\rho(\Psi-{z}\Phi)$ one obtains from~\eqref{Tinv} that
${z}\in\rho(T)$ and
$(T-{z})^{-1}=\Phi(\Psi-{z}\Phi)^{-1}$.

(v), (vi) are immediate from (ii), (iii) and (iv).
\end{proof}

Now let a family of linear relations $\cF(\cdot)$ be represented in
the form
\begin{equation}
\label{tau1} \cF({z})=\{\Phi({z}),\Psi({z})\}:=
 \{\,\{\Phi({z})h,\Psi({z})h\}:\, h\in\cH\,\},
\end{equation}
where $\Phi(\cdot)$, $\Psi(\cdot)$ is a pair of holomorphic operator
functions on $\dC_+\cup\dC_-$. In the case when $\cF(\cdot)$ is a
Nevanlinna family the corresponding pair
$\{\Phi(\cdot),\Psi(\cdot)\}$ in the representation~\eqref{tau1} is
called the Nevanlinna pair.

\begin{definition}
\label{npair} A pair $\{\Phi,\Psi\}$ of $\cB({\cH})$-valued functions
$\Phi(\cdot)$, $\Psi(\cdot)$ holomorphic on $\cmr$ is said to be a
Nevanlinna pair if:
\begin{enumerate}
\item[(NP1)]
$\IM \Phi({z})^*\Psi({z})/\IM {z} \geq 0$, ${z}\in
\dC_+\cup\dC_-$;
\item[(NP2)]
$\Psi(\bar {z})^*\Phi({z})-\Phi(\bar
{z})^*\Psi({z})=0$, ${z} \in \cmr$;
\item[(NP3)]
$0\in\rho(\Psi({z})\pm i\Phi({z}))$, ${z} \in\dC_\pm$.
\end{enumerate}
\end{definition}


Two Nevanlinna pairs $\{\Phi(\cdot),\Psi(\cdot)\}$ and
$\{\Phi_1(\cdot),\Psi_1(\cdot)\}$ are said to be
\textit{equivalent}, if they generate the same graph $\cF(z)$ in
\eqref{tau1} for every $z\in\cmr$. In fact, the formula~\eqref{tau1}
establishes a one-to-one correspondence $\{\Phi,\Psi\}\mapsto \cF$
between the equivalence classes of Nevanlinna pairs and Nevanlinna
families $\cF(\cdot)\in \wt R(\cH)$; cf.
\cite[Proposition~2.4]{DHMS1}.

\begin{proposition}[\cite{DHMS1}]\label{propnp}
Let $\{\Phi,\Psi\}$ be a Nevanlinna pair of $\cB(\cH)$-valued functions
on $\dC_+\cup\dC_-$, and let $\cF(\cdot)$ be defined by
\eqref{tau1}. Then $\cF(\cdot)$ is a Nevanlinna family.

Conversely, if $\cF(\cdot)\in \wt R(\cH)$ then there exists a
Nevanlinna pair $\{\Phi,\Psi\}$ of $\cB(\cH)$-valued functions on
$\dC_+\cup\dC_-$ such that \eqref{tau1} holds.
\end{proposition}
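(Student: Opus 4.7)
The plan is to prove the two directions using Proposition~\ref{CRIT} as the main engine. For the forward direction, fix a Nevanlinna pair $\{\Phi,\Psi\}$ and study $\cF(z)$ defined by \eqref{tau1}. First I would verify the standing hypothesis $\ker\Phi(z)\cap\ker\Psi(z)=\{0\}$ of Proposition~\ref{CRIT}(iv)--(vi): if $h$ lies in both kernels, then $(\Psi(z)\pm i\Phi(z))h=0$, which by (NP3) forces $h=0$. Then each property of a Nevanlinna family can be unpacked:

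\textbf{(NF1):} For $z\in\dC_+$, condition (NP1) is exactly $-i(\Phi(z)^*\Psi(z)-\Psi(z)^*\Phi(z))\ge 0$, so Proposition~\ref{CRIT}(ii) gives that $\cF(z)$ is dissipative; combined with (NP3) and Proposition~\ref{CRIT}(v) it is maximal dissipative. The symmetric reasoning on $\dC_-$ gives maximal accumulativity. \textbf{(NF2):} By Proposition~\ref{CRIT}(i), $\cF(z)^*=\{\{h,h'\}:\Psi(z)^*h=\Phi(z)^*h'\}$. Taking $\{h,h'\}=\{\Phi(\bar z)g,\Psi(\bar z)g\}$ and using (NP2) one sees $\cF(\bar z)\subset\cF(z)^*$; since both sides are maximal accumulative (for $z\in\dC_+$) the inclusion is in fact equality. \textbf{(NF3):} For $w\in\dC_+$ and $z\in\dC_+$, Proposition~\ref{CRIT}(iv) gives $(\cF(z)+w)^{-1}=\Phi(z)(\Psi(z)+w\Phi(z))^{-1}$; invertibility of $\Psi(z)+w\Phi(z)$ is obtained by writing it as a small perturbation of the boundedly invertible $\Psi(z)+i\Phi(z)$ (via (NP3)), and holomorphy then follows from holomorphy of $\Phi,\Psi$ and continuity of operator inversion.

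For the converse, given $\cF\in\wt R(\cH)$ I would build a representing pair from the resolvent of $\cF$. Since $\cF(z)$ is maximal dissipative (resp. accumulative) on $\dC_\pm$, the points $\mp i$ lie in $\rho(\cF(z))$, so I set
\[
\Phi(z)=(\cF(z)+i)^{-1},\ \ \Psi(z)=I-i\Phi(z)\ \ (z\in\dC_+),\qquad
\Phi(z)=(\cF(z)-i)^{-1},\ \ \Psi(z)=I+i\Phi(z)\ \ (z\in\dC_-).
\]
Holomorphy on each half-plane is immediate from (NF3). For every $h\in\cH$ the vector $f:=\Phi(z)h$ satisfies $(f,h-if)\in\cF(z)$ on $\dC_+$ (resp. $(f,h+if)\in\cF(z)$ on $\dC_-$), and conversely every $\{f,g\}\in\cF(z)$ is of this form with $h=g+if$ (resp. $g-if$), so \eqref{tau1} holds. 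Then (NP3) is automatic since $\Psi(z)+i\Phi(z)=I$ on $\dC_+$ and $\Psi(z)-i\Phi(z)=I$ on $\dC_-$. Condition (NP1) is obtained by feeding $(f,h\mp if)\in\cF(z)$ into the dissipative/accumulative inequality and rewriting the resulting scalar inequality as $\langle\IM(\Phi(z)^*\Psi(z))h,h\rangle\gtrless 0$.

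The step I expect to require the most care is (NP2) in the converse direction. The natural way is to use (NF2): for $z\in\dC_+$ the identity $(\cF(\bar z)-i)^*=\cF(z)+i$ (standard behaviour of adjoints of linear relations under translation by scalars) gives $\Phi(\bar z)^*=\Phi(z)$ and hence $\Psi(\bar z)^*=\Psi(z)$ in my parametrisation. Then (NP2) reduces to the commutation $\Psi(z)\Phi(z)=\Phi(z)\Psi(z)$, which is trivial since $\Psi(z)=I-i\Phi(z)$ is a polynomial in $\Phi(z)$. Once this is in place, all three conditions (NP1)--(NP3) are verified, and the proposition is proved. Throughout, no model/realisation theory is invoked; the argument is purely by Proposition~\ref{CRIT} and the relational identities $(\cdot-\lambda)^*=(\cdot)^*-\bar\lambda$.
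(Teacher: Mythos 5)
Your proof is correct and follows essentially the same route as the paper: the direct part via Proposition~\ref{CRIT} (with the kernel condition $\ker\Phi(z)\cap\ker\Psi(z)=\{0\}$ checked from (NP3), a detail the paper leaves implicit), and the converse via the same resolvent pair $\Phi(z)=(\cF(z)\pm i)^{-1}$, $\Psi(z)=I\mp i\Phi(z)$, with (NP2) reduced to $\Phi(\bar z)=\Phi(z)^*$ and the trivial commutation of $\Psi(z)$ with $\Phi(z)$. The only differences are cosmetic: for (NF2) the paper deduces equality from $\cF(\bar z)\subset\cF(z)^*$ by computing $\ker(\cF(z)^*\pm i)$ and $\ran(\cF(z)^*\pm i)$ directly from (NP3), whereas you invoke the standard fact that the adjoint of a maximal dissipative relation is maximal accumulative — both are fine. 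One small caveat: your justification of (NF3) for an arbitrary $w\in\dC_+$ by treating $\Psi(z)+w\Phi(z)$ as a ``small perturbation'' of $\Psi(z)+i\Phi(z)$ only works for $w$ close to $i$; this is harmless, since (NF3) asks for \emph{some} $w$, and $w=i$ works with no perturbation at all (invertibility is literally (NP3), as in the paper), while bounded invertibility for every $w\in\dC_+$ follows instead from the already established maximal dissipativity of $\cF(z)$ (so $-w\in\rho(\cF(z))$) together with Proposition~\ref{CRIT}(iv).
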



\begin{proof}
Let $\{\Phi,\Psi\}$ be a Nevanlinna pair. Then it follows from
(NP1), (NP3) and Proposition~\ref{CRIT} that the linear relation
$\cF({z})$ is maximal dissipative (maximal accumulative) for all
${z}\in\dC_+$ (${z}\in\dC_-$). The assumption (NP2) concerning
$\{\Phi,\Psi\}$ means that $\cF(\bar{z})\subset \cF({z})^*$.
According to \eqref{tau01}
\begin{equation}
\label{Mstar} \cF({z})^* =\{\,\{h,h'\}\in\cH^2:\,
\Psi({z})^*h-\Phi({z})^*h'=0 \,\},
\end{equation}
and, hence
\[
 \cF({z})^*\pm i=\left\{ \left\{h,g \right\}:
 (\Psi({z})^*\pm i\Phi({z})^*)h=\Phi({z})^*g \right\}.
\]
Using (NP3) one obtains $\ker(\cF({z})^*\pm i)=\{0\}$ and
$\ran(\cF({z})^*\pm i)=\cH$ for ${z}\in\dC_\mp$. Similarly
$\ker(\cF(\bar{z})\pm i)=\{0\}$ and $\ran(\cF(\bar{z})\pm i)=\cH$,
${z}\in\dC_\mp$. Hence, $(\cF({z})^*\pm i)^{-1}$ and $(\cF({\bar z})
\pm i)^{-1}$, ${z}\in\dC_\mp$, both are everywhere defined operators
and, thus, the inclusion $\cF(\bar{z})\subset \cF({z})^*$ must hold
as an equality $\cF(\bar{z})= \cF({z})^*$, ${z}\in\cmr$. This proves
(NF2) and (NF3) with $w=\pm i$.

Conversely, assume that $\cF(\cdot)\in \wt R(\cH)$. Define
$\Phi(\cdot)$ and $\Psi(\cdot)$ by
\[
 \Phi({z})=(\cF({z})\pm i)^{-1},
\quad
 \Psi({z})=I\mp i(\cF({z})\pm i)^{-1},
\quad {z}\in \dC_\pm.
\]
Then $\cF(\cdot)$ has the representation \eqref{tau1}. The property
(NF3) implies that $\Phi(\cdot)$, $\Psi(\cdot)$ are holomorphic on
$\dC_+\cup\dC_-$ with the values in $\cB(\cH)$. Clearly,
$\Psi({z})\pm i\Phi({z})=I$ and hence (NP3) holds. Moreover,
the symmetry condition (NP2) is obvious and the positivity condition
(NP1) follows from (NF1) in view of
\[
  \frac{((\Phi({z})^*\Psi({z})
           -\Psi({z})^*\Phi({z}))h,h)}{\IM {z}}
  =\frac{\IM (\Psi({z})h,\Phi({z})h)}{\IM {z}}
  \geq 0.
\]
This completes the proof.
\end{proof}

Let $\{\Phi,\Psi\}$ be a Nevanlinna pair and let $\cF(\cdot)$ be a
family of linear relations associated with the Nevanlinna pair
$\{\Phi,\Psi\}$. The Cayley transform $\cC({z})$ of $\cF({z})$ is
given by
\begin{equation}
\label{Cayley}
  \cC({z})
  =(\Psi({z})-i\Phi({z}))(\Psi({z})+i\Phi({z}))^{-1}\quad({z}\in\dC_+).
\end{equation}
The operator-valued function $\cC({z})$ belongs to the Schur class
$\cS({\mathcal H})$, i.e. $\cC({z})$ is holomorphic on $\dC_+$ and
takes values in the set of contractive operators on ${\mathcal H}$
for all ${z}\in\dC_+$. For every operator-valued function
$\cC\in\cS({\mathcal H})$ the kernel
\begin{equation}\label{eq:Ker_K}
    {\sf K}({z},\,{w})=\frac{I-\cC({w})^*\cC({z})}{-i({z}-\bar{{w}})},\quad z,w\in\dC_+.
\end{equation}
is nonnegative on $\dC_+$ in a sense that for every choice of $z_j\in\dC_+$ and $h_j\in\cH$ $(j=1,\dots,n)$ the quadratic form
\[
\sum_{j=1}^n\left({\sf K}({z_i},\,{z_j})h_i,h_j\right)\xi_i\bar\xi_j
\]
is nonnegative.
\begin{proposition}
\label{Nkernel} Let $\{\Phi,\Psi\}$ be a Nevanlinna pair. Then the
following kernel is nonnegative on $\dC_+$:
\[
 {\sf N}_{\Phi,\Psi} ({z},{w})
  =\frac{\Phi({w})^* \Psi({z})-
   \Psi({w})^* \Phi({z})}{{z}-\bar {w}},
\quad
 {z},{w} \in \dC_+.
\]
\end{proposition}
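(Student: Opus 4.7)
The plan is to reduce the nonnegativity of the kernel ${\sf N}_{\Phi,\Psi}$ to the already known nonnegativity of the Schur-class kernel ${\sf K}(z,w)$ attached to the Cayley transform $\cC(z)$ defined in \eqref{Cayley}. By (NP3) the operators $A(z):=\Psi(z)+i\Phi(z)$ and $B(z):=\Psi(z)-i\Phi(z)$ satisfy $0\in\rho(A(z))$ for $z\in\dC_+$, so that $\cC(z)=B(z)A(z)^{-1}$ is holomorphic on $\dC_+$, and (NP1) implies $\cC(z)$ is contractive; hence $\cC\in\cS(\cH)$ and the kernel ${\sf K}(z,w)$ from \eqref{eq:Ker_K} is nonnegative on $\dC_+$.

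The core step is an algebraic identity. First I would compute
\[
 A(w)^*A(z)-B(w)^*B(z)
 =(\Psi(w)+i\Phi(w))^*(\Psi(z)+i\Phi(z))-(\Psi(w)-i\Phi(w))^*(\Psi(z)-i\Phi(z)),
\]
and, after expansion, the diagonal terms $\Psi(w)^*\Psi(z)$ and $\Phi(w)^*\Phi(z)$ cancel, leaving
\[
 A(w)^*A(z)-B(w)^*B(z)=2i\bigl(\Psi(w)^*\Phi(z)-\Phi(w)^*\Psi(z)\bigr).
\]
Dividing both sides by $-i(z-\bar w)$ and recalling the definition of ${\sf K}(z,w)$, this gives the factorization
\[
 A(w)^*{\sf K}(z,w)A(z)
 =\frac{A(w)^*A(z)-B(w)^*B(z)}{-i(z-\bar w)}
 =2\,{\sf N}_{\Phi,\Psi}(z,w),\qquad z,w\in\dC_+.
\]

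With this identity in hand, the nonnegativity follows immediately: for any finite choice of points $z_1,\dots,z_n\in\dC_+$ and vectors $h_1,\dots,h_n\in\cH$, set $g_j:=A(z_j)h_j\in\cH$; then
\[
 2\sum_{i,j=1}^{n}\bigl({\sf N}_{\Phi,\Psi}(z_i,z_j)h_i,h_j\bigr)
 =\sum_{i,j=1}^{n}\bigl({\sf K}(z_i,z_j)A(z_i)h_i,A(z_j)h_j\bigr)
 =\sum_{i,j=1}^{n}\bigl({\sf K}(z_i,z_j)g_i,g_j\bigr)\ge 0,
\]
where the last inequality is the nonnegativity of the Schur kernel ${\sf K}$.

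I do not expect genuine obstacles here: (NP1) together with (NP3) are exactly what is needed to push $\cC$ into $\cS(\cH)$, and the rest is bookkeeping with the Cayley transform. The only mildly delicate point is keeping track of the conjugate-linearity in the second argument so that the factor $A(w)^*$ (and not $A(w)$) appears on the left; but this is forced by the form of ${\sf K}(z,w)$ and is confirmed by the identity above.
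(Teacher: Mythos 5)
Your proof is correct and follows essentially the same route as the paper: both pass to the Cayley transform $\cC(z)=(\Psi(z)-i\Phi(z))(\Psi(z)+i\Phi(z))^{-1}\in\cS(\cH)$ and use the congruence identity $A(w)^*{\sf K}(z,w)A(z)=2\,{\sf N}_{\Phi,\Psi}(z,w)$ with $A(z)=\Psi(z)+i\Phi(z)$, which is exactly the paper's relation \eqref{eq:KerK_N} rewritten. The only difference is that you verify the algebraic identity explicitly, whereas the paper simply states it; your factor of $2$ and the placement of $A(w)^*$ on the left are both consistent with the paper.
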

\begin{proof}
Let $\cF(\cdot)$ be a family of linear relations associated with the
Nevanlinna pair $\{\Phi,\Psi\}$ and let $\cC({z})$ be the Cayley
transform of $\cF({z})$ given by~\eqref{Cayley}. It follows from the
equality
\begin{equation}\label{eq:KerK_N}
   {\sf K}({z},\,{w})
=2(\Psi({w})+i\Phi({w}))^{-*}{\sf N}_{\Phi,\Psi}({z},{w})
      (\Psi({z})+i\Phi({z}))^{-1}
\end{equation}
that the kernel ${\sf N}_{\Phi,\Psi}({z},{w})$ is nonnegative on
$\dC_+$; cf. \cite{SNF}.
\end{proof}


Nonnegativity of the kernel $ {\sf N}_{\Phi,\Psi}$ implies the
following properties for Nevanlinna families and reflect maximum
principle in the class $\wt R(\cH)$.

\begin{proposition}\label{NFmaxmod}
Let $\{\Phi(\cdot), \Psi(\cdot)\}$ be a Nevanlinna pair and let
$\cF(\cdot)\in\wt R(\cH)$ be the corresponding Nevanlinna family.
Let $z_0\in\cmr$ be fixed and let $S$ be a symmetric relation in
$\cH$. Then
\begin{equation}\label{Sinvar}
 S\subset \cF(z_0) \quad \Rightarrow \quad S\subset \cF(z) \quad
 \text{for every } z\in\cmr.
\end{equation}
In particular,
\begin{equation}\label{eq:F_capF*0}
    \cF({z})\cap \cF(\bar{z})\equiv \cF({z_0})\cap \cF(\bar{z_0}) \quad ({z}\in \cmr)
\end{equation}
is a maximal symmetric subspace $S$ satisfying the inclusion
$S\subset \cF(z)$ for some (equivalently for every) $z\in\cmr$.
Moreover,
\begin{equation}\label{eq:F_cap_F*}
    \cF({z})\cap \cF(\bar{z})=\{\,\{\Phi(z)u,\Psi(z)u\}:\,
    u\in \ker({\sf N}_{\Phi,\Psi}({z},{z}))\,\} \quad (z\in\dC_+).
\end{equation}
\end{proposition}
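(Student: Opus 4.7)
The plan is to establish the three assertions in reverse order, with the nonnegative kernel ${\sf N}_{\Phi,\Psi}$ from Proposition~\ref{Nkernel} and the Schur function $\cC(\cdot)$ from \eqref{Cayley} as the two main tools, connected by the intertwining identity \eqref{eq:KerK_N}.

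First I would establish the parametrization \eqref{eq:F_cap_F*} at a given $z\in\dC_+$. Every $\{h,h'\}\in\cF(z)$ has the form $\{\Phi(z)u,\Psi(z)u\}$ with $u$ uniquely determined, since $\ker\Phi(z)\cap\ker\Psi(z)=\{0\}$ by (NP3). Using the adjoint formula \eqref{Mstar}, the condition $\{h,h'\}\in\cF(z)^{*}=\cF(\bar z)$ becomes $(\Psi(z)^{*}\Phi(z)-\Phi(z)^{*}\Psi(z))u=0$, and the identity
\[
{\sf N}_{\Phi,\Psi}(z,z)=\frac{\Phi(z)^{*}\Psi(z)-\Psi(z)^{*}\Phi(z)}{2i\,\IM z},\qquad z\in\dC_+,
\]
shows that this is equivalent to $u\in\ker{\sf N}_{\Phi,\Psi}(z,z)$.

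Next, I would prove the $z$-invariance of $\cF(z)\cap\cF(\bar z)$ via a maximum modulus argument applied to the Schur function $\cC(\cdot)$. Given $u\in\ker{\sf N}_{\Phi,\Psi}(z_0,z_0)$ at some $z_0\in\dC_+$, set $v=(\Psi(z_0)+i\Phi(z_0))u$. The intertwining identity \eqref{eq:KerK_N} at $z=w=z_0$ forces $({\sf K}(z_0,z_0)v,v)=0$, equivalently $\|\cC(z_0)v\|=\|v\|$. Applying the scalar maximum modulus principle to the holomorphic function $z\mapsto(\cC(z)v,\cC(z_0)v)$ on $\dC_+$, which is bounded by $\|v\|^{2}$ and attains this bound at $z_0$, one concludes via the Cauchy--Schwarz equality case that $\cC(z)v=\cC(z_0)v$ for every $z\in\dC_+$. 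Setting $u(z):=(\Psi(z)+i\Phi(z))^{-1}v$, the pair of identities $(\Psi(z)\pm i\Phi(z))u(z)=(\Psi(z_0)\pm i\Phi(z_0))u$ yields $\Phi(z)u(z)=\Phi(z_0)u$ and $\Psi(z)u(z)=\Psi(z_0)u$, while $u(z)\in\ker{\sf N}_{\Phi,\Psi}(z,z)$ by a second appeal to \eqref{eq:KerK_N}. Hence $\{\Phi(z_0)u,\Psi(z_0)u\}\in\cF(z)\cap\cF(\bar z)$ for every $z\in\dC_+$; the case $z\in\dC_-$ is covered by (NF2), and swapping $z_0\leftrightarrow z$ delivers \eqref{eq:F_capF*0}.

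For the implication \eqref{Sinvar}, let $S$ be symmetric with $S\subset\cF(z_0)$ and parametrize each $\{h,h'\}\in S$ as $\{\Phi(z_0)u,\Psi(z_0)u\}$. Symmetry of $S$ forces
\[
(h',k)-(h,k')=2i(\IM z_0)({\sf N}_{\Phi,\Psi}(z_0,z_0)u_1,u_2)=0
\]
for the parameters $u_1,u_2$ of any two elements of $S$; taking $u_1=u_2=u$ and using ${\sf N}_{\Phi,\Psi}(z_0,z_0)\ge 0$ yields $u\in\ker{\sf N}_{\Phi,\Psi}(z_0,z_0)$, so $S\subset\cF(z_0)\cap\cF(\bar z_0)$ and \eqref{Sinvar} follows from the previous step. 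Finally, $\cF(z)\cap\cF(z)^{*}$ is itself symmetric, since any $\{h,h'\},\{k,k'\}$ in it satisfy $(h',k)=(h,k')$ by the very definition of the adjoint, so it is the maximal subspace with the property \eqref{Sinvar}. The main obstacle is the operator-valued maximum modulus step; reducing it to the scalar case through the inner product $(\cC(z)v,\cC(z_0)v)$ is the key device and avoids any appeal to realization theory or functional-model machinery.
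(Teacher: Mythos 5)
Your proof is correct, but it takes a genuinely different route to the invariance part than the paper does. The paper proves \eqref{Sinvar} directly by a two-point positivity argument: for $\{\Phi(z_0)h_0,\Psi(z_0)h_0\}\in S$ it forms the $2\times 2$ matrix built from ${\sf N}_{\Phi,\Psi}$ at the points $z_0$ and $z$; symmetry of $S$ annihilates the upper-left entry, nonnegativity of the kernel then forces $({\sf N}_{\Phi,\Psi}(z_0,z)h_0,h)=0$ for all $h$, which by \eqref{Mstar} places the element in $\cF(z)^*=\cF(\bar z)$ -- and this works for every $z\in\cmr$ in one stroke, after which \eqref{eq:F_capF*0}, the maximality, and \eqref{eq:F_cap_F*} are read off essentially as in your first and last steps. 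You reverse the logical order: you first prove the one-point characterization \eqref{eq:F_cap_F*} (the same computation as the paper's final step), then obtain the $z$-invariance of $\cF(z)\cap\cF(\bar z)$ by passing to the Cayley transform and applying a scalar maximum-modulus plus Cauchy--Schwarz rigidity argument to $z\mapsto(\cC(z)v,\cC(z_0)v)$, and finally deduce \eqref{Sinvar} by showing that symmetry of $S$ pushes its parameters into $\ker{\sf N}_{\Phi,\Psi}(z_0,z_0)$. Your middle step is closer in spirit to the paper's Lemma~\ref{lem:MaxP}(1) (which the paper uses for other items of Theorem~\ref{invar}) than to its proof of Proposition~\ref{NFmaxmod}; what it buys is an explicit rigidity statement -- $\cC(z)v$ is constant in $z$ and the fixed graph element is reproduced at every $z$ by the holomorphically varying parameter $u(z)=(\Psi(z)+i\Phi(z))^{-1}v$ -- at the price of invoking (NP3) and \eqref{eq:KerK_N} and of a separate reduction of $\dC_-$ via (NF2), whereas the paper's kernel argument treats all $z\in\cmr$ uniformly and is shorter. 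Two points you leave implicit are easily supplied: passing from $({\sf K}(z,z)v,v)=0$ to ${\sf K}(z,z)v=0$ uses ${\sf K}(z,z)\ge 0$, and the maximality assertion rests not only on the symmetry of $\cF(z)\cap\cF(z)^*$ but also on the containment $S\subset\cF(z_0)\cap\cF(\bar z_0)$ established in your last step.
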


\begin{proof}
Assume that $S\subset\cF(z_0)$ and let
$\{\Phi(z_0)h_0,\Psi(z_0)h_0\}\in S$ and $\{\Phi(z)h,\Psi(z)h\}\in
\cF(z)$ with $z\neq \bar{z_0}$. By Proposition \ref{Nkernel} the
matrix
\[
\begin{pmatrix}
({\sf N}_{\Phi,\Psi}({z}_0,{z}_0)h_0,h_0) &
({\sf N}_{\Phi,\Psi}({z}_0,{z})h_0,h) \\
({\sf N}_{\Phi,\Psi}({z},{z}_0)h,h_0) & ({\sf
N}_{\Phi,\Psi}({z},{z})h,h)
\end{pmatrix} \quad(h\in \cH,\,\, h_0\in\cH_0)
\]
is nonnegative, and since $S$ is symmetric the left-upper corner
equals to 0. This implies that $({\sf
N}_{\Phi,\Psi}({z}_0,{z})h_0,h)=0$ for all $h\in \cH$ or,
equivalently,
\[
(\Psi(z_0)h_0,\Phi(z)h)_\cH=(\Phi(z_0)h_0,\Psi(z)h)_\cH
\]
for all $h\in \cH$. Therefore, $\{\Phi(z_0)h_0,\Psi(z_0)h_0\}\in
\cF(z)^*=\cF(\bar{z})$ for all $z\neq \bar{z_0}$. This proves
\eqref{Sinvar}.

Since $\cF({z})\cap \cF(\bar{z})=\cF({z})\cap\cF(z)^*$, this
subspace is symmetric and hence its maximality as a symmetric subset
of $\cF(z)$ follows from \eqref{Sinvar}. Moreover, the invariance
property \eqref{eq:F_capF*0} is also immediate from \eqref{Sinvar}.

Finally, the equality \eqref{eq:F_cap_F*} follows from the general
formula $F\cap F^*=F\uphar\ker(\IM F)$ for a linear relation $F$
with $\mul F=\mul F^*$; see \cite[Section 5.1]{HdSSz2009}. In fact,
here in view of \eqref{tau01} $\{\Phi(z)u,\Psi(z)u\}\in \cF(z)^*$
precisely, when
\[
 \Psi(z)^*\Phi(z)u -\Phi(z)^*\Psi(z)u=0,
\]
or, equivalently, ${\sf N}_{\Phi,\Psi}({z},{z})u=0$.
\end{proof}


\begin{corollary}\label{NFCor}
Let $\{\Phi(\cdot), \Psi(\cdot)\}$ and $\cF(\cdot)\in\wt R(\cH)$ be
as in Proposition \ref{NFmaxmod} and let $z_0\in\cmr$ and $a\in\dR$.
Then the following statements hold for all $z\in\cmr$:
\begin{enumerate}
\def\labelenumi{\rm (\roman{enumi})}
 \item $\ker (\cF(z)-a)=\ker (\cF(z_0)-a)$;
 \item $\ker (\cF(z)-\cF(\bar{z}))=\{f\in \cH:\, \{f,g\}\in \cF({z_0})\cap \cF(\bar{z_0})\}$;
 \item $\ker (\cF(z)^{-1}-\cF(\bar{z})^{-1})=\{g\in \cH:\, \{f,g\}\in \cF({z_0})\cap
 \cF(\bar{z_0})\}$;
 \item $\mul \cF(z)=\mul \cF(z_0)$.
\end{enumerate}
\end{corollary}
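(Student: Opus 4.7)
The plan is to deduce all four items directly from Proposition \ref{NFmaxmod}. The key point used throughout is that every symmetric subrelation of $\cF(z_0)$ remains a subrelation of $\cF(z)$ for every $z\in\cmr$, together with the invariance \eqref{eq:F_capF*0} of the intersection $\cF(z)\cap\cF(\bar z)$.

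For (i) I would introduce the relation
\[
 S_a=\{\,\{h,ah\}:\,h\in\ker(\cF(z_0)-a)\,\}\subset \cF(z_0).
\]
Since $a\in\dR$ one has $(ah,h)-(h,ah)=0$ for every $h$, so $S_a$ is symmetric. Proposition \ref{NFmaxmod} then forces $S_a\subset\cF(z)$, i.e.\ $\ker(\cF(z_0)-a)\subset\ker(\cF(z)-a)$, and interchanging $z_0$ with $z$ yields equality.

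For (ii) and (iii) I would first record the elementary identities, valid for arbitrary linear relations $T_1,T_2$ in $\cH$:
\[
 \ker(T_1-T_2)=\dom(T_1\cap T_2),\qquad \ker(T_1^{-1}-T_2^{-1})=\ran(T_1\cap T_2),
\]
which follow by unpacking the definition of the pointwise difference of relations. Setting $T_1=\cF(z)$ and $T_2=\cF(\bar z)$, statement (ii) reduces to the $z$-independence of $\dom(\cF(z)\cap\cF(\bar z))$ and statement (iii) to the $z$-independence of $\ran(\cF(z)\cap\cF(\bar z))$; both are immediate from \eqref{eq:F_capF*0} once one recognizes that the right-hand sides in (ii), (iii) are precisely $\dom(\cF(z_0)\cap\cF(\bar{z_0}))$ and $\ran(\cF(z_0)\cap\cF(\bar{z_0}))$, respectively.

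Finally, for (iv) I would apply Proposition \ref{NFmaxmod} to the symmetric subrelation $\{0\}\times\mul\cF(z_0)\subset\cF(z_0)$ (symmetry being trivial here), obtaining $\mul\cF(z_0)\subset\mul\cF(z)$ for every $z\in\cmr$; swapping the roles of $z_0$ and $z$ gives equality. Alternatively, (iv) can be read off from (iii) by noting that maximality of $\cF(z)$ as a dissipative/accumulative relation gives $\mul\cF(z)=\mul\cF(z)^*=\mul\cF(\bar z)$, so $\{0\}\times\mul\cF(z)\subset\cF(z)\cap\cF(\bar z)$. The only mildly subtle step anywhere is the identity relating $\ker(T_1-T_2)$ to $\dom(T_1\cap T_2)$, but this is just a direct unfolding of what $\{f,0\}\in T_1-T_2$ means, so no serious obstacle is expected.
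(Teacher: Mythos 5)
Your proposal is correct and follows essentially the same route as the paper: items (i) and (iv) are obtained by applying Proposition \ref{NFmaxmod} (in both directions) to the symmetric subrelations $\{\,\{h,ah\}\,\}$ and $\{0\}\times\mul\cF(z_0)$ of $\cF(z_0)$, and items (ii), (iii) reduce, via the identities $\ker(T_1-T_2)=\dom(T_1\cap T_2)$ and $\ker(T_1^{-1}-T_2^{-1})=\ran(T_1\cap T_2)$, to the invariance \eqref{eq:F_capF*0} of $\cF(z)\cap\cF(\bar z)$. Your explicit verification of these relation identities merely spells out what the paper summarizes as ``the formulas clearly hold when $z=z_0$.''
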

\begin{proof}
The statements (i) and (iv) follow from Proposition \ref{NFmaxmod},
since $\ker (\cF(z)-a)$ with $a\in\dR$ and $\mul \cF(z)$ are
symmetric subspaces of $\cF(z)$.

On the other hand, the formulas (ii) and (iii) clearly hold when
$z=z_0$. The independence from $z\in\cmr$ follows from
\eqref{eq:F_capF*0}.
\end{proof}

The statements of the following lemma are based on the maximum principle for the class $\cS({\mathcal H})$ and, apparently, are well-known.
\begin{lemma}\label{lem:MaxP}
Let $\cC(\cdot)\in\cS({\mathcal H})$, ${z}_0\in\dC_+$, $\alpha\in\dC$ and $|\alpha|=1$.
Then the following statements hold:
\begin{enumerate}
    \item[(1)] If $0\in\sigma_p({{\sf K}({z}_0,\,{z}_0)})$, then $0\in\sigma_p({{\sf K}({z},\,{z})})$
for all ${z}\in\dC_+$ and in this case
\begin{equation*}
\ker {\sf K}({z}_0,\,{z}_0)=\ker {\sf K}({z},\,{z});
\end{equation*}
    \item[(2)] If $0\in\rho({{\sf K}({z}_0,\,{z}_0)})$, then $0\in\rho({{\sf K}({z},\,{z})})$ for all
${z}\in\dC_+$.
 \item[(3)] If $0\in\sigma_c({{\sf K}({z}_0,\,{z}_0)})$, then $0\in\sigma_c({{\sf K}({z},\,{z})})$ for all
${z}\in\dC_+$.
\item[(4)] If $\alpha\in\sigma_p({\cC({z}_0)})$, then
$\alpha\in\sigma_p({\cC({z})})$ for all ${z}\in\dC_+$ and in this case
\begin{equation}\label{eq:KerId0}
\ker (\cC({z}_0)-\alpha)=\ker (\cC({z})-\alpha);
\end{equation}
\item[(5)] If $\alpha\in\rho({\cC({z}_0)})$, then
$\alpha\in\rho({\cC({z})})$ for all ${z}\in\dC_+$;
\item[(6)] If $\alpha\in\sigma_c({\cC({z}_0)})$, then
$\alpha\in\sigma_c({\cC({z})})$ for all ${z}\in\dC_+$.
\end{enumerate}
\end{lemma}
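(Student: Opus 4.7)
The plan is to parallel the proof of items (1)--(4) of Theorem~\ref{prop:Inv}, replacing the Nevanlinna kernel ${\mathbf N}_F(z,w)$ by the nonnegative Schur kernel ${\sf K}(z,w)$ from~\eqref{eq:Ker_K}, and substituting the Herglotz maximum principle by the classical maximum modulus principle for holomorphic functions with values in $\overline{\dD}$. Items (1)--(3) concern the nonnegative selfadjoint operator ${\sf K}(z,z)$, while items (4)--(6) concern the contraction $\cC(z)$; their proofs are structurally parallel, and no Harnack inequality or realization argument is required at this stage.

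For item (1), if ${\sf K}(z_0,z_0)h_0=0$, then nonnegativity of the $2\times 2$ matrix
\[
\begin{pmatrix}
0 & ({\sf K}(z_0,z)h_0,h) \\
({\sf K}(z,z_0)h,h_0) & ({\sf K}(z,z)h,h)
\end{pmatrix}
\]
forces the off-diagonal entries to vanish for every $h\in\cH$, giving ${\sf K}(z,z_0)h_0=0$, or equivalently $\cC(z_0)^*\cC(z)h_0=h_0$. The Cauchy--Schwarz inequality then yields $\|h_0\|^2=(\cC(z)h_0,\cC(z_0)h_0)\le\|\cC(z)h_0\|\,\|\cC(z_0)h_0\|\le\|h_0\|^2$, so $\|\cC(z)h_0\|=\|h_0\|$, that is ${\sf K}(z,z)h_0=0$; symmetry delivers the stated kernel equality. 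Item (3) follows by the same scheme applied to an approximate null sequence $\{h_n\}$ with $\|h_n\|=1$: the Cauchy--Schwarz bound $|({\sf K}(z,z_0)h_n,h_n)|^2\le({\sf K}(z_0,z_0)h_n,h_n)({\sf K}(z,z)h_n,h_n)$, combined with the uniform estimate ${\sf K}(z,z)\le(2\IM z)^{-1}I$, yields $(\cC(z)h_n,\cC(z_0)h_n)\to 1$, whence $\|\cC(z)h_n\|\to 1$ and so ${\sf K}(z,z)h_n\to 0$. Item (2) is an immediate consequence of (1) and (3), since the selfadjointness of ${\sf K}(z,z)$ precludes residual spectrum.

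Item (4) is obtained by noting that $z\mapsto(\cC(z)h_0,h_0)/\|h_0\|^2$ is holomorphic on $\dC_+$ with values in $\overline{\dD}$ and equals the unimodular value $\alpha$ at $z_0$; the maximum modulus principle forces it to be the constant $\alpha$, and the Cauchy--Schwarz equality case together with $\|\cC(z)h_0\|\le\|h_0\|$ then yields $\cC(z)h_0=\alpha h_0$, giving~\eqref{eq:KerId0}. Item (6) is proved by an approximate-eigenvector analogue: the hypothesis $(\cC(z_0)-\alpha)h_n\to 0$ with $\|h_n\|=1$ gives $\|\cC(z_0)h_n\|\to 1$ and hence ${\sf K}(z_0,z_0)h_n\to 0$, so (3) delivers ${\sf K}(z,z)h_n\to 0$ and the $2\times 2$ argument also gives ${\sf K}(z,z_0)h_n\to 0$; this means $\cC(z_0)^*\cC(z)h_n-h_n\to 0$, which combined with $\cC(z_0)h_n=\alpha h_n+o(1)$ and $|\alpha|=1$ yields $(\cC(z)h_n,h_n)\to\alpha$, and the expansion $\|(\cC(z)-\alpha)h_n\|^2=\|\cC(z)h_n\|^2-2\RE(\bar\alpha(\cC(z)h_n,h_n))+1$ tends to $0$. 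Finally, item (5) follows from (4) and (6) once one checks that a contraction has empty residual spectrum at unimodular points: if $|\alpha|=1$ and $\cC^*k=\bar\alpha k$, a short norm computation based on $\|\cC\|\le 1$ produces $\cC k=\alpha k$, so $\ker(\cC^*-\bar\alpha)\ne\{0\}$ implies $\ker(\cC-\alpha)\ne\{0\}$, ruling out the residual case.

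The main technical subtlety, relative to the Nevanlinna setting, is that the kernel equality ${\sf K}(z,z_0)h_0=0$ does not translate directly into a statement about $\cC(z)h_0$, but only into the composite identity $\cC(z_0)^*\cC(z)h_0=h_0$; promoting this to the desired norm saturation $\|\cC(z)h_0\|=\|h_0\|$ requires a two-sided use of the contractivity of $\cC(z)$ and $\cC(z_0)$ via Cauchy--Schwarz. This extra step is the Schur-class counterpart of the algebraic step in the Nevanlinna case that comes essentially for free from $\IM F(z_0)h_0=0$.
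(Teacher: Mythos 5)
Your argument is correct, but it routes items (2), (5), (6) quite differently from the paper. For (1) and (4) the substance agrees: the paper also starts from the vanishing off-diagonal entry, but then passes from $\cC(z)^*\cC(z_0)h_0=h_0$ to the adjoint identity using that a fixed vector of a contraction is fixed by its adjoint, whereas you use only the scalar identity $(\cC(z)h_0,\cC(z_0)h_0)=\|h_0\|^2$ plus Cauchy--Schwarz (note in passing that your displayed matrix literally yields ${\sf K}(z_0,z)h_0=0$, i.e. $\cC(z)^*\cC(z_0)h_0=h_0$, not ${\sf K}(z,z_0)h_0=0$; this is immaterial since only the inner product with $h_0$ is used). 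Your (4) via the maximum modulus principle for the scalar Schur function $(\cC(z)h_0,h_0)/\|h_0\|^2$ is a clean alternative to the paper's kernel argument. The real divergence is structural: the paper proves (2) directly, by decomposing $\cH^2$ into the ranges of the block columns $\col (I,\cC(z_0))$ and $\col(\cC(z_0)^*,I)$, and proves (5) directly via Harnack's inequality applied to $h_u(z)=\RE\{e^{-i\arg(\alpha)}((\alpha-\cC(z))u,u)\}$ with constants independent of $u$, and then gets (3) and (6) by elimination. You invert this scheme: you prove (3) and (6) directly by transporting quasi-eigen sequences through the kernel Cauchy--Schwarz bounds, and then obtain (2) and (5) by elimination, using that a selfadjoint operator has no residual spectrum and that a contraction has none at unimodular points. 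This avoids both the $\cH^2$-decomposition and Harnack's inequality altogether, and as a bonus produces a quasi-eigen sequence common to all $z\in\dC_+$, a refinement the paper only establishes later, in the Nevanlinna setting, in Propositions \ref{prop_Harmonic_cont_spec} and \ref{prop_continuous_spec}.

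Two assembly points you should make explicit, though neither is a genuine gap. First, in (3) and (6), transporting the quasi-eigen sequence only places $0$ (resp.\ $\alpha$) in the approximate point spectrum of ${\sf K}(z,z)$ (resp.\ $\cC(z)$); to land in $\sigma_c$ you must still exclude the point case via (1), resp.\ (4), with the roles of $z_0$ and $z$ interchanged, and in (6) also the residual case via the unimodular-eigenvalue fact you state in (5) -- the same one-line exclusions the paper performs in its proofs of (3) and (6). Second, in (6) the strong convergence ${\sf K}(z,z_0)h_n\to 0$ does not follow from the same-vector determinant estimate used in (3); it requires the Cauchy--Schwarz bound with an arbitrary test vector in the other slot, which gives $\|{\sf K}(z,z_0)h_n\|^2\le (2\IM z_0)^{-1}({\sf K}(z,z)h_n,h_n)$, after which your computation goes through.
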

\begin{proof}
(1) Let $0\in\sigma_p({{\sf K}({z}_0,\,{z}_0)})$  and let ${\sf
K}({z}_0,\,{z}_0)h_0=0$  for some ${z}_0\in\dC_+$ and $h_0\ne 0$.
Then the following matrix
\[
\begin{pmatrix}
({\sf K}({z}_0,{z}_0)h_0,h_0) &
({\sf K}({z}_0,{z})h_0,h) \\
({\sf K}({z},{z}_0)h,h_0) & ({\sf K}({z},{z})h,h)
\end{pmatrix},
\]
is nonnegative for all ${z}\in\dC_+$, $h\in{\mathcal H}$ and hence $({\sf K}({z}_0,{z})h_0,h)=0$ for all
 $h\in{\mathcal H}$. This implies that the contraction $T=\cC({z})^*\cC({z}_0)$ has an eigenvector $h_0$ corresponding to the eigenvalue 1: $Th_0=h_0$. Therefore, $h_0$ is also an eigenvector for the operator $T^*=\cC({z}_0)^*C({z})$ ($T^*h_0=h_0$) and, hence,
\[
\|h_0\|=\|\cC({z}_0)^*\cC({z})h_0\|\le\|\cC({z})h_0\|.
\]
This implies the inequality
\[
(0\le)({\sf K}({z},{z})h_0,h_0)=(h_0,h_0)-(\cC({z})h_0,\cC({z})h_0)\le 0,
\]
which means that ${\sf K}({z},{z})h_0=0$. This proves the statement (1).

(2) Let $0\in\rho({{\sf K}({z}_0,\,{z}_0)})$  for some
${z}_0\in\dC_+$. Then the operator $\cC({z}_0)$ is a strict contraction and the space ${\mathcal H}^2$ admits the decomposition
\begin{equation}\label{eq:decom}
    {\mathcal H}^2=\ran\left(%
\begin{array}{c}
  I \\
  \cC({z}_0) \\
\end{array}%
\right) \dotplus
\ran\left(%
\begin{array}{c}
  \cC({z}_0)^* \\
  I \\
\end{array}%
\right).
\end{equation}
Assume that $0\in\sigma_c({{\sf K}({z},\,{z})})$. Then there exists a sequence $h_n\in{\mathcal H}$, such that $\|h_n\|=1$ and
${\sf K}({z},\,{z})h_n\to 0$ as $n\to\infty$. Using the decomposition~\eqref{eq:decom}, one obtains
\begin{equation}
\label{eq:decom1}
\left(%
\begin{array}{c}
  h_n \\
 \cC({z})h_n \\
\end{array}%
\right)=
\left(%
\begin{array}{c}
  h_n' \\
 \cC({z}_0)h_n' \\
\end{array}%
\right) \dotplus
\ran\left(%
\begin{array}{c}
  \cC({z}_0)^*h_n'' \\
  h_n'' \\
\end{array}%
\right),
\end{equation}
where $h_n',h_n''\in{\mathcal H}$. Since the matrix
\[
\begin{pmatrix}
({\sf K}({z}_0,{z}_0)h_n',h_n') &
({\sf K}({z}_0,{z})h_n',h_n) \\
({\sf K}({z},{z}_0)h_n,h_n') & ({\sf
K}({z},{z})h_n,h_n)
\end{pmatrix},
\]
is nonnegative, then
\[
({\sf K}({z}_0,{z})h_n',h_n)\to 0
\]
as $n\to\infty$. Using~\eqref{eq:decom1} one obtains
\[
((I-\cC({z}_0)^*\cC({z}_0))h_n',h_n')\to 0.
\]
By the assumption $0\in\rho({{\sf K}({z}_0,\,{z}_0)})$ this implies $h_n'\to 0$. Next, the equality
\[
((I-\cC({z})^*\cC({z}))h_n,h_n)=((I-\cC({z}_0)^*\cC({z}_0))h_n',h_n')
-((I-\cC({z}_0)\cC({z}_0)^*)h_n'',h_n'')
\]
yields
\[
((I-\cC({z}_0)\cC({z}_0)^*)h_n'',h_n'')\to 0,
\]
which, in view of the condition $0\in\rho(I-\cC({z}_0)^*\cC({z}_0))$
implies that $h_n''\to 0$. Therefore, $h_n\to 0$ as $n\to\infty$ and
this contradicts the equalities $\|h_n\|=1$.

(3)  If $0\in\sigma_c({{\sf K}({z}_0,\,{z}_0)})$, then by (1) and (2) $0\not\in\sigma_p({{\sf K}({z},\,{z})})\cup\rho({{\sf K}({z},\,{z})})$ and hence
 $0\in\sigma_c({{\sf K}({z},\,{z})})$.

(4) Let $\alpha\in\sigma_p({\cC({z}_0)}$ $(|\alpha|=1)$ and
$\cC({z}_0)h_0=\alpha h_0$ for some vector $h_0\ne 0$. Then $h_0$ is
an eigenvector for the contraction $T=\cC({z})^*$, corresponding to
the eigenvalue $\alpha^{-1}$ and  for the contraction
$T^*=\cC({z})$, corresponding to the eigenvalue
$\alpha^{-*}=\alpha$. This proves the equality~\eqref{eq:KerId0}.


(5) Let $\alpha\in\rho(\cC({z}_0))$ $(|\alpha|=1)$. Then for every
${z}\in\dC_+$ and $u\in\cH$ the harmonic function
\[
h_u({z}):=\RE\{
e^{-i\arg(\alpha)}\left((\alpha-\cC({z}))u,u\right)\}\ge 0
\]
is nonnegative and satisfies the inequality $h_u({z}_0)\ge
q\|u\|^2>0$ for some $q\in(0,1)$. By Harnack's inequality (cf.
Section 4 below) for every ${z}\in\dC_+$ there are positive
constants $c_1({z})$ and $c_2({z})$, such that
\[
c_1({z})h_u({z}_0)\le h_u({z})\le c_2({z})h_u({z}_0).
\]
It is emphasized that constants $c_1(z)$ and $c_2(z)$ do not depend on
$u\in \cH$.
Therefore,
\[
h_u({z})\ge qc_1({z})\|u\|^2>0\quad\mbox{for all}\quad u\in\cH
\] and hence $\alpha\in\rho(\cC({z}))$.

(6) Let $\alpha\in\sigma_c({\cC({z}_0)})$
$(|\alpha|=1)$. Then by (4) and (5) $\alpha\not\in\sigma_p({{\cC}({z})})\cup\rho({{\cC}({z})})$. Moreover,
$\alpha\not\in\sigma_r({{\cC}({z})})$, since otherwise we would have $\bar\alpha\in\sigma_p({{\cC}({z})^*})$ and hence
$\bar\alpha\in\sigma_p({{\cC}({z}_0)^*})$ which contradicts the assumption $\alpha\in\sigma_c({\cC({z}_0)})$. This completes the proof of (6).
\end{proof}
In order to adapt the above statements to the class $\wt R(\cH)$ we will need the following lemma connecting the spectral properties of a Nevanlinna family $\cF(\cdot)\in\wt R(\cH)$ with the spectral properties of its Cayley transform $\cC(z)$.
\begin{lemma}
\label{lem:F_C} Let $\{\Phi(\cdot), \Psi(\cdot)\}$ be a Nevanlinna
pair, let $\cF(\cdot)\in\wt R(\cH)$ be the corresponding
Nevanlinna family and let the operator function $\cC(z)$ and the kernel ${\sf K}({z},\,{w})$ be  defined by~\eqref{Cayley} and~\eqref{eq:Ker_K}. Let $a\in \dR$,  $\alpha=(a-i)(a+i)^{-1}$ and ${z}\in \dC_+$. Then the following equivalences hold:
\begin{enumerate}
\def\labelenumi{\rm (\roman{enumi})}
\item $0\in \sigma_p({\sf N}_{\Phi,\Psi} ({z},{z}))\Longleftrightarrow 0\in \sigma_p({\sf K}({z},{z}))$;
\item $0\in \sigma_c({\sf N}_{\Phi,\Psi} ({z},{z}))\Longleftrightarrow 0\in \sigma_c({\sf K}({z},{z}))$;
\item $0\in \rho({\sf N}_{\Phi,\Psi} ({z},{z}))\Longleftrightarrow 0\in \rho({\sf K} ({z},{z}))$;
\item $a\in \sigma_p(\cF({z}))\Longleftrightarrow \alpha\in \sigma_p(\cC({z}))$;
\item $a\in \sigma_c(\cF({z}))\Longleftrightarrow \alpha\in \sigma_c(\cC({z}))$;
\item $a\in \rho(\cF({z}))\Longleftrightarrow \alpha\in \rho(\cC({z}))$;
\item $\cF({z})\in \cB(\cH)\Longleftrightarrow 1\in \rho(\cC(z))$.
\end{enumerate}
\end{lemma}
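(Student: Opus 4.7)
The plan is to reduce items (i)--(iii) to the congruence identity \eqref{eq:KerK_N}, and items (iv)--(vii) to two explicit algebraic identities expressing $\cC(z)-\alpha$ and $I-\cC(z)$ through the Nevanlinna pair. Throughout I will exploit (NP3), which guarantees that $\Psi(z)+i\Phi(z)\in\cB(\cH)$ is boundedly invertible for every $z\in\dC_+$.

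For (i)--(iii), I invoke identity \eqref{eq:KerK_N} directly: it exhibits ${\sf K}(z,z)$ and ${\sf N}_{\Phi,\Psi}(z,z)$ as congruent through the bounded bijection $(\Psi(z)+i\Phi(z))^{-1}$. Hence their nullspaces correspond via $(\Psi(z)+i\Phi(z))$ and they are simultaneously boundedly invertible, which handles (i) and (iii). Since both operators are nonnegative and selfadjoint, their residual spectra are empty, so (ii) is forced by (i) and (iii).

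For (iv)--(vi), the central step is the factorization
\[
 \cC(z)-\alpha
  =\tfrac{2i}{a+i}\bigl(\Psi(z)-a\Phi(z)\bigr)\bigl(\Psi(z)+i\Phi(z)\bigr)^{-1},
\]
obtained from \eqref{Cayley} using $1-\alpha=2i/(a+i)$ and $1+\alpha=2a/(a+i)$. Combined with the description $\cF(z)=\{\{\Phi(z)u,\Psi(z)u\}:u\in\cH\}$, a short calculation yields $\ran(\cF(z)-a)=\ran(\Psi(z)-a\Phi(z))=\tfrac{a+i}{2i}\ran(\cC(z)-\alpha)$, together with $\ker(\cF(z)-a)=\Phi(z)\ker(\Psi(z)-a\Phi(z))$ and $\ker(\cC(z)-\alpha)=(\Psi(z)+i\Phi(z))\ker(\Psi(z)-a\Phi(z))$. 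One must observe that $\Phi(z)$ acts injectively on $\ker(\Psi(z)-a\Phi(z))$: if $\Phi(z)u_0=0$ there, then $\Psi(z)u_0=0$, whence $(\Psi(z)+i\Phi(z))u_0=0$ and $u_0=0$ by (NP3). Consequently $\ker(\cF(z)-a)$ and $\ker(\cC(z)-\alpha)$ are in linear bijection, giving (iv); and (v), (vi) follow since the ranges differ only by a nonzero scalar, so density and surjectivity transfer across.

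Finally, for (vii), I rewrite \eqref{Cayley} in the equivalent form
\[
 I-\cC(z)=2i\,\Phi(z)\bigl(\Psi(z)+i\Phi(z)\bigr)^{-1}=2i\,(\cF(z)+i)^{-1},
\]
noting that $(\cF(z)+i)^{-1}$ coincides with the bounded operator $\Phi(z)(\Psi(z)+i\Phi(z))^{-1}$, whose range is $\dom\cF(z)$. From here $\ran(I-\cC(z))=2i\dom\cF(z)$ and $\ker(I-\cC(z))=\mul\cF(z)$, so $1\in\rho(\cC(z))$ precisely when $\dom\cF(z)=\cH$ and $\mul\cF(z)=\{0\}$. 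By the closed graph theorem applied to the closed relation $\cF(z)$, this is equivalent to $\cF(z)\in\cB(\cH)$. I expect this last equivalence to be the main technical subtlety, since it is the only point where the multi-valuedness of $\cF(z)$ plays an essential role; verifying $\ker(I-\cC(z))=\mul\cF(z)$ again rests on the invertibility of $\Psi(z)+i\Phi(z)$ supplied by (NP3).
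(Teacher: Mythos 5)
Your proposal is correct and follows essentially the same route as the paper: items (i)--(iii) via the congruence identity \eqref{eq:KerK_N}, and items (iv)--(vii) by reducing everything through \eqref{Cayley} to the kernel, range and invertibility properties of $\Psi(z)-a\Phi(z)$ (resp.\ $\Phi(z)$), using (NP3). Your explicit factorization $\cC(z)-\alpha=\tfrac{2i}{a+i}\bigl(\Psi(z)-a\Phi(z)\bigr)\bigl(\Psi(z)+i\Phi(z)\bigr)^{-1}$ merely packages uniformly what the paper does pointwise for (iv) and states as equivalences for (v)--(vii), and your closed-graph remark in (vii) is equivalent to the paper's criterion $\cF(z)\in\cB(\cH)\Leftrightarrow 0\in\rho(\Phi(z))$.
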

\begin{proof}
The equivalences (i)-(iii) are implied by the identity~\eqref{eq:KerK_N}.

Notice, that $a\in \sigma_p(\cF({z}))$ if and only if
\begin{equation}\label{eq:PsiPhi}
    (\Psi(z)-a\Phi(z))u=0\quad\mbox{for some }\quad u\in\cH\setminus\{0\}.
\end{equation}
If \eqref{eq:PsiPhi} holds then by (NP3)
$h:=(\Psi(z)+i\Phi(z))u=(a+i)\Phi(z)u\ne 0$ and in view
of~\eqref{Cayley}
\[
 \cC(z)h=(a-i)\Phi(z)u=\frac{a-i}{a+i}h=\alpha h.
\]
Therefore, $\alpha\in \sigma_p(\cC({z}))$.
Conversely, if $\alpha\in \sigma_p(\cC({z}))$, and $\cC(z)h=\alpha h$ for some $h\in\cH\setminus\{0\}$, then~\eqref{eq:PsiPhi} holds for $u=(\Phi(z)+i\Phi(z))^{-1}h(\ne 0)$
and hence $a\in \sigma_p(\cF({z}))$. This proves (iv).

The equivalences (v)-(vi) follows from the equality~\eqref{Cayley} and the equivalences
\[
a\in \sigma_c(\cF({z}))\Longleftrightarrow \ker(\Psi(z)-a\Phi(z))=\{0\},\,\mbox{ and }\,\ran(\Psi(z)-a\Phi(z))\,\mbox{ is dense in }\, \cH;
\]
\[
a\in \rho(\cF({z}))\Longleftrightarrow 0\in\rho(\Psi(z)-a\Phi(z)).
\]

Similarly, (vii) follows from the equality~\eqref{Cayley} and the equivalence
\[
\cF({z})\in \cB(\cH)\Longleftrightarrow 0\in\rho(\Phi(z)). \qedhere
\]
\end{proof}
\begin{theorem}
\label{invar} Let $\{\Phi(\cdot), \Psi(\cdot)\}$ be a Nevanlinna
pair and let $\cF(\cdot)\in\wt R(\cH)$ be the corresponding
Nevanlinna family. Let ${z}_0\in \dC_+$ and let $a \in \dR$. Then
the following statements hold:
\begin{enumerate}
\def\labelenumi{\rm (\roman{enumi})}

\item if $0\in \sigma_p({\sf N}_{\Phi,\Psi} ({z}_0,{z}_0))$,
      then $0\in \sigma_p({\sf N}_{\Phi,\Psi}({z},{z}))$
for all ${z}\in \cmr$;
\item if $0\in \sigma_c({\sf N}_{\Phi,\Psi} ({z}_0,{z}_0))$,
      then $0\in \sigma_c({\sf N}_{\Phi,\Psi}({z},{z}))$
for all ${z}\in \cmr$;

\item if $0\in \rho({\sf N}_{\Phi,\Psi} ({z}_0,{z}_0))$,
then
      $0\in \rho({\sf N}_{\Phi,\Psi} ({z},{z}))$ for all ${z}\in \cmr$;

\item if $a\in \sigma_p(\cF({z}_0))$, then
      $a\in \sigma_p(\cF({z}))$ for all ${z}\in
\cmr$ and in this case
\[
\ker (\cF({z})-a)=\ker (\cF({z_0})-a);
\]

\item if $a\in \sigma_c(\cF({z}_0))$, then
      $a\in \sigma_c(\cF({z}))$ for all ${z}\in
\cmr$;

\item if $a\in \rho(\cF({z}_0))$, then
      $a\in \rho(\cF({z}))$ for all ${z}\in \cmr$;

\item if $\cF({z}_0)\in \cB(\cH)$, then
      $\cF({z})\in \cB(\cH)$ for all ${z}\in \cmr$;
\item $\mul(\cF({z}))$ does not depend on ${z}\in \cmr$.
\end{enumerate}
\end{theorem}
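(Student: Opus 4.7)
The plan is to route every one of the eight statements through the Schur-class Cayley transform $\cC(\cdot)$ defined in \eqref{Cayley}. The preparatory Lemma \ref{lem:F_C} already translates, in a one-to-one fashion, each spectral property of $\cF(z)$ (and of the kernel ${\sf N}_{\Phi,\Psi}(z,z)$) into the corresponding property of $\cC(z)$ (and of the kernel ${\sf K}(z,z)$) evaluated at the same point $z\in\dC_+$. Consequently, most of Theorem \ref{invar} on $\dC_+$ reduces to a direct application of Lemma \ref{lem:MaxP}.

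More specifically, items (i)--(iii) are immediate from Lemma \ref{lem:F_C}(i)--(iii) combined with Lemma \ref{lem:MaxP}(1)--(3). For the existence halves of (iv)--(vi) I set $\alpha=(a-i)/(a+i)$, which lies on the unit circle when $a\in\dR$, and combine Lemma \ref{lem:F_C}(iv)--(vi) with Lemma \ref{lem:MaxP}(4)--(6). Item (vii) is the special case $\alpha=1$ of the same Cayley recipe, using Lemma \ref{lem:F_C}(vii) and Lemma \ref{lem:MaxP}(5).

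For the kernel equality in (iv) and the invariance in (viii) I would instead appeal to the symmetric-subspace invariance principle of Proposition \ref{NFmaxmod}. In (iv), because $a\in\dR$, the set $S=\{\,\{f,af\}:\, f\in\ker(\cF(z_0)-a)\,\}$ is a symmetric relation contained in $\cF(z_0)$, so by \eqref{Sinvar} it lies in $\cF(z)$ for every $z\in\cmr$; this gives $\ker(\cF(z_0)-a)\subset \ker(\cF(z)-a)$, and interchanging $z$ and $z_0$ yields equality. Similarly, $\{0\}\times\mul\cF(z_0)$ is a symmetric subspace contained in $\cF(z_0)$, so \eqref{Sinvar} applied to it delivers $\mul\cF(z_0)\subset \mul\cF(z)$ and hence equality by symmetry in $z$, $z_0$; both facts are in fact already recorded in Corollary \ref{NFCor}(i),(iv).

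The one step that requires a separate comment is that Lemma \ref{lem:MaxP} is formulated on $\dC_+$, whereas Theorem \ref{invar} asserts invariance throughout $\cmr$. This gap is bridged by (NF2): $\cF(\bar z)=\cF(z)^*$. For real $a$ the conditions $a\in\sigma_p(\cF(z))$, $a\in\sigma_c(\cF(z))$, $a\in\rho(\cF(z))$, as well as the quantities $\mul\cF(z)$ and $\dom\cF(z)=\cH$, are all preserved under the passage from $\cF(z)$ to $\cF(z)^*$ through the standard duality between kernel and closure of range. Equivalently, one may replay the whole Cayley-based argument with base point $-i$, producing a Schur-class function on $\dC_-$ to which the obvious analogue of Lemma \ref{lem:MaxP} applies within $\dC_-$. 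The only step with genuine analytic content is the Harnack inequality behind Lemma \ref{lem:MaxP}(5); everything else is organized bookkeeping, with the half-plane identification just described being the main (minor) obstacle.
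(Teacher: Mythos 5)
Your proposal is correct and takes essentially the same route as the paper: statements (ii)--(vii) via the Cayley transform machinery (Lemma~\ref{lem:MaxP} combined with Lemma~\ref{lem:F_C}), and the kernel equality in (iv) together with (i) and (viii) via the symmetric-subspace invariance of Proposition~\ref{NFmaxmod} and Corollary~\ref{NFCor}. Your explicit bridging of the passage from $\dC_+$ to all of $\cmr$ through $\cF(\bar z)=\cF(z)^*$ (or by redoing the Cayley argument in $\dC_-$) addresses a point the paper leaves implicit, and is handled correctly.
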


\begin{proof}
Statements (i), (iv) and (viii) have been derived already in
Corollary~\ref{NFCor}.

Statements  (ii) and (iii) follow from Lemma~\ref{lem:MaxP} (2)-(3)
and Lemma~\ref{lem:F_C} (ii)--(iii).

Statements (iv) -- (vii), follow from Lemma~\ref{lem:MaxP} (4)-(6)
and Lemma~\ref{lem:F_C} (iv) -- (vii).
%
\end{proof}


\begin{remark}\label{RemInvariant}
The invariance results in Theorem~\ref{invar} can be obtained from
the realization of a Nevanlinna family as a Weyl family of a
boundary relation. Such an approach for proving these facts was used
in \cite{DHMS06}; see, in particular, \cite[Lemma 4.1, Prop.
4.18]{DHMS06}. Also other models giving realizations for Nevanlinna
families can be used in establishing such invariance results; we
mention, in particular, the functional models which can be found
from \cite{BDHdS2011,BHdS2009}.
\end{remark}

\begin{proposition}
\label{rwi1} Let $\{\Phi, \Psi\}$ be a Nevanlinna pair and let
$\cF(\cdot)\in\wt R(\cH)$ be the corresponding Nevanlinna family.
Let ${z}\in \dC_+$. Then:
\begin{enumerate}
\def\labelenumi{\rm (\roman{enumi})}
\item
 $\cF(\cdot)\in R^s(\cH)$ if and only if $0\not\in\sigma_p({\sf
N}_{\Phi,\Psi}({z},{z}))$;

\item
 $\cF(\cdot)\in R^u(\cH)$ if and only if $0\in\rho({\sf
 N}_{\Phi,\Psi}({z},{z}))$.
\end{enumerate}
\end{proposition}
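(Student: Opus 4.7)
For (i), formula \eqref{eq:F_cap_F*} of Proposition~\ref{NFmaxmod} gives
\[
\cF(z)\cap\cF(\bar z)=\bigl\{\{\Phi(z)u,\Psi(z)u\}:u\in\ker{\sf N}_{\Phi,\Psi}(z,z)\bigr\}.
\]
Property (NP3) forces $\Psi(z)u=\Phi(z)u=0\Rightarrow u=0$, so each nonzero $u\in\ker{\sf N}_{\Phi,\Psi}(z,z)$ produces a nonzero element of the intersection and conversely; hence $\cF(z)\cap\cF(\bar z)=\{\{0,0\}\}$ if and only if $0\notin\sigma_p({\sf N}_{\Phi,\Psi}(z,z))$. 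Since $\{0\}\times\mul\cF(z)\subset\cF(z)\cap\cF(\bar z)$ (using $\mul\cF(z)=\mul\cF(\bar z)$ from Theorem~\ref{invar}~(viii)), triviality of this intersection forces $\cF=F$ to be single-valued, and then $F(z)\cap F(z)^*=\{\{h,F(z)h\}:h\in\ker\IM F(z)\}$ shows it is trivial precisely when $\ker\IM F(z)=\{0\}$, i.e., when $\cF\in R^s(\cH)$. Theorem~\ref{invar}~(i) then propagates the property from a single $z$ to all $z\in\cmr$.

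For (ii), I would first reduce $0\in\rho({\sf N}_{\Phi,\Psi}(z,z))$ to a condition on the Cayley transform $\cC(z)$. Identity \eqref{eq:KerK_N}, together with the bounded invertibility of $\Psi(z)+i\Phi(z)$ and $\Psi(\bar z)+i\Phi(\bar z)$ from (NP3), and the formula ${\sf K}(z,z)=(I-\cC(z)^*\cC(z))/(2\IM z)$, give
\[
0\in\rho({\sf N}_{\Phi,\Psi}(z,z))\ \Longleftrightarrow\ 0\in\rho(I-\cC(z)^*\cC(z)).
\]
To identify the right-hand condition with $\cF(z)\hplus\cF(z)^*=\cH^2$, I would apply the Cayley unitary $U:\cH^2\to\cH^2$, $U(f,g)=\tfrac{1}{\sqrt{2}}(g+if,g-if)$. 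Under $U$ the relation $\cF(z)$ is carried to the graph of $V:=\cC(z)$, and the Krein-space duality $\cF(z)^*=\cF(z)^{[\perp]}$ with respect to the fundamental symmetry $J_{\cH}=\bigl(\begin{smallmatrix}0&-iI\\ iI&0\end{smallmatrix}\bigr)$ becomes $U(\cF(z)^*)=\{(V^*u,u):u\in\cH\}$. Consequently $\cF(z)\hplus\cF(z)^*$ corresponds to the range of the selfadjoint block operator
\[
N=\begin{pmatrix}I&V^*\\ V&I\end{pmatrix}\colon\cH\oplus\cH\to\cH\oplus\cH,
\]
and the triangular factorization
\[
N=\begin{pmatrix}I&0\\ V&I\end{pmatrix}\begin{pmatrix}I&0\\0&I-VV^*\end{pmatrix}\begin{pmatrix}I&V^*\\0&I\end{pmatrix}
\]
shows $N$ is boundedly invertible iff $I-VV^*$ is, which is equivalent to $0\in\rho(I-V^*V)$ since $VV^*$ and $V^*V$ share their nonzero spectra. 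Theorem~\ref{invar}~(iii) propagates the condition to all $z\in\cmr$, and single-valuedness of $\cF$ is automatic: $0\in\rho({\sf N}_{\Phi,\Psi}(z,z))$ implies $0\notin\sigma_p({\sf N}_{\Phi,\Psi}(z,z))$, so $\cF\in R^s(\cH)\subset R(\cH)$ by~(i).

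The main obstacle is the Cayley-transport step in (ii): one must verify that $U$ sends $\cF(z)\hplus\cF(z)^*$ precisely onto the range of $N$, which requires identifying $\cF(z)^*$ with the co-graph $\{(V^*u,u):u\in\cH\}$ rather than with a graph built directly from $V$. Once this geometric picture is secured, the Schur complement $I-VV^*$ in the block-triangular factorization of $N$ delivers the equivalence immediately.
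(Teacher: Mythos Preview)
Your argument for (i) is essentially the paper's argument, only spelled out more carefully: the paper simply asserts that $h\in\ker{\sf N}_{\Phi,\Psi}(z,z)$ corresponds to $\{\Phi(z)h,\Psi(z)h\}\in\cF(z)\cap\cF(z)^*$ via \eqref{Mstar}, whereas you invoke the packaged formula \eqref{eq:F_cap_F*} and make the link to single-valuedness and $\ker\IM F(z)$ explicit.

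For (ii) your route is correct but genuinely different from the paper's. The paper argues directly with the pair $\{\Phi,\Psi\}$: writing $\{f,f'\}\in\cF(z)\hplus\cF(\bar z)$ as the system
\[
\Phi(z)h+\Phi(\bar z)g=f,\qquad \Psi(z)h+\Psi(\bar z)g=f',
\]
one applies $\Psi(z)^*$ to the first equation, $\Phi(z)^*$ to the second, subtracts, and uses (NP2) to obtain ${\sf N}_{\Phi,\Psi}(z,z)h=\Psi(z)^*f-\Phi(z)^*f'$ (and the analogous equation for $g$). Solvability of the system for all $f,f'$ is then read off directly from invertibility of ${\sf N}_{\Phi,\Psi}(z,z)$ (together with Theorem~\ref{invar} for the point $\bar z$); conversely, (NP3) guarantees that $\Psi(z)^*f-\Phi(z)^*f'$ sweeps out all of $\cH$, so surjectivity of the system forces $\ran{\sf N}_{\Phi,\Psi}(z,z)=\cH$ and hence $0\in\rho({\sf N}_{\Phi,\Psi}(z,z))$. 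This is a short, purely algebraic computation that never leaves the Nevanlinna-pair picture. Your approach instead transports everything through the Cayley map $U$, reducing the question to invertibility of the block operator $\begin{pmatrix}I&V^*\\ V&I\end{pmatrix}$ with $V=\cC(z)$, and then uses a Schur factorization. The geometric step you flag as the ``main obstacle'' is indeed the only nontrivial point, and it is correct: $UJ_\cH U^*=\begin{pmatrix}I&0\\0&-I\end{pmatrix}$, so the $J_\cH$-orthocomplement $\cF(z)^*$ of $\cF(z)$ goes to the cograph $\{(V^*u,u):u\in\cH\}$. Your argument is somewhat longer but has the merit of making transparent the parallel between $R^u(\cH)$ and the class of \emph{strict} contractions (those with $0\in\rho(I-V^*V)$), which is only implicit in the paper's treatment.
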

\begin{proof}
(i) Let  $h\in\ker{\sf N}_{\Phi,\Psi}({z},{z})$, that is
$(\Phi({z})^* \Psi({z})-\Psi({z})^* \Phi({z}))h=0$. Then it follows
from~\eqref{Mstar} that
\[
\{\Phi({z})h,
\Psi({z})h\}\in \cF({z})\cap \cF({z})^*
\]
and, therefore,
$h=0$ if and only if $\cF({z})\cap \cF({z})^*=\{0\}$.

(ii) Let $f$, $f'\in\cH$ and let $h$, $g$ satisfy the equations
\begin{equation}
\label{DHMS06} \Phi({z})h+\Phi(\bar{z})g=f, \quad
\Psi({z})h+\Psi(\bar{z})g=f',
\end{equation}
Then it follows from (NP2) that
\begin{equation}
\label{TRANS2} {\sf
N}_{\Phi,\Psi}({z},{z})h=\Psi({z})^*f-\Phi({z})^*f',
\quad {\sf N}_{\Phi,\Psi}(\bar{z},\bar{z})g
=\Psi(\bar{z})^*f-\Phi(\bar{z})^*f'.
\end{equation}
Assume that $0\in\rho({\sf N}_{\Phi,\Psi}({z},{z}))$. Then it
follows from~\eqref{TRANS2} and Theorem~\ref{invar} that the
system~\eqref{DHMS06} has a unique solution for all $f$, $f'\in\cH$
and, therefore, $\cF({z})\hplus \cF({z})^*=\cH^2$.

Conversely, let $\cF(\cdot)\in R^u(\cH)$ and thus the
system~\eqref{DHMS06} has a unique solution for all $f$, $f'\in\cH$.
Then it follows from the first equation in~\eqref{TRANS2} and the
hypothesis (NP3) that $\ran {\sf N}_{\Phi,\Psi}({z},{z})=\cH$. This
implies that $0\in\rho({\sf N}_{\Phi,\Psi}({z},{z}))$.
\end{proof}

\begin{proposition}
\label{Runif+} Let $ \cF(\cdot)\in R^u(\cH)$. Then $
\cF({z})\in\cB(\cH)$ and $\cF({z})^{-1}\in\cB(\cH)$ for every
${z}\in\cmr$. In particular, the following equality holds
$R^u(\cH)=R^u[\cH]$.
\end{proposition}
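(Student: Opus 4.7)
The plan is to combine the characterization of $R^u(\cH)$ from Proposition~\ref{rwi1}(ii) with the spectral dictionary of Lemma~\ref{lem:F_C}, transporting everything to the Cayley transform side. By Proposition~\ref{rwi1}(ii), the hypothesis $\cF(\cdot)\in R^u(\cH)$ is equivalent to $0\in\rho({\sf N}_{\Phi,\Psi}(z,z))$ for any representing Nevanlinna pair $\{\Phi,\Psi\}$ and every $z\in\dC_+$ (and by Theorem~\ref{invar}(iii) also for $z\in\dC_-$).

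First I would transfer this invertibility to the Schur kernel ${\sf K}(z,z)$ of the Cayley transform $\cC(z)$. The intertwining identity~\eqref{eq:KerK_N}, together with $(\Psi(z)+i\Phi(z))^{-1}\in\cB(\cH)$ from (NP3), yields $0\in\rho({\sf K}(z,z))$ for every $z\in\dC_+$. Since ${\sf K}(z,z)=(I-\cC(z)^*\cC(z))/(2\IM z)$, this means $I-\cC(z)^*\cC(z)\geq\varepsilon I$ for some $\varepsilon=\varepsilon(z)>0$; in other words $\cC(z)$ is strictly contractive. Consequently the entire unit circle lies in $\rho(\cC(z))$.

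Next I would read off the two required properties via Lemma~\ref{lem:F_C}. Part~(vii) with $\alpha=1\in\rho(\cC(z))$ gives $\cF(z)\in\cB(\cH)$, while part~(vi) with $a=0$ (so that $\alpha=(0-i)/(0+i)=-1\in\rho(\cC(z))$) gives $0\in\rho(\cF(z))$, i.e.\ $\cF(z)^{-1}\in\cB(\cH)$. The case $z\in\dC_-$ then follows automatically from the symmetry relation $\cF(z)=\cF(\bar z)^*$, since the adjoint of a bounded, everywhere-defined, boundedly invertible operator enjoys the same three properties.

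Finally, for the identity $R^u(\cH)=R^u[\cH]$, I would use the canonical Nevanlinna pair $\Phi\equiv I$, $\Psi(z)=\cF(z)$, which becomes a legitimate pair once $\cF(z)\in\cB(\cH)$ (the conditions (NP1)--(NP3) are then immediate consequences of the dissipativity/accumulativity of $\cF(z)$). For this pair one has ${\sf N}_{I,\cF}(z,z)=\IM \cF(z)/\IM z$, so Proposition~\ref{rwi1}(ii) translates the defining condition of $R^u(\cH)$ directly into $0\in\rho(\IM \cF(z))$, which is the defining condition of $R^u[\cH]$ in~\eqref{eq:R_su}. I expect the only substantive step to be the passage from ${\sf N}_{\Phi,\Psi}$ to ${\sf K}$ via~\eqref{eq:KerK_N}; every subsequent deduction reduces to a direct reading of Lemma~\ref{lem:F_C}.
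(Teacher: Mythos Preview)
Your argument is correct but takes a different route from the paper. The paper works directly with the Nevanlinna pair: assuming $0\in\rho({\sf N}_{\Phi,\Psi}(z,z))$ it shows that $\Phi(z)$ (and by symmetry $\Psi(z)$) is bounded below via a short sequential argument, and then combines this with the density of $\ran\Phi(z)=\dom\cF(z)$ and $\ran\Psi(z)=\ran\cF(z)$ to conclude that both $\Phi(z)$ and $\Psi(z)$ are boundedly invertible; $\cF(z)=\Psi(z)\Phi(z)^{-1}$ and $\cF(z)^{-1}=\Phi(z)\Psi(z)^{-1}$ then lie in $\cB(\cH)$. Your approach instead transports the invertibility of ${\sf N}_{\Phi,\Psi}(z,z)$ to the Schur kernel via~\eqref{eq:KerK_N}, deduces that $\cC(z)$ is a strict contraction so that its spectrum avoids the unit circle, and then reads off $\cF(z)\in\cB(\cH)$ and $0\in\rho(\cF(z))$ from Lemma~\ref{lem:F_C}(vii),(vi). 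Your path leans more heavily on the Cayley-transform machinery already assembled in the paper, while the paper's own proof is more self-contained and elementary (it needs neither Lemma~\ref{lem:F_C} nor the identity~\eqref{eq:KerK_N}). On the other hand, your final paragraph, using the canonical pair $(I,\cF)$ together with Proposition~\ref{rwi1}(ii), makes the identification $R^u(\cH)=R^u[\cH]$ more explicit than the paper does, where this step is left to the reader.
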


\begin{proof}
Let $\{\Phi,\Psi\}$ be a Nevanlinna pair associated to $ M$. It is
enough to prove that $\Phi({z})$ and $\Psi({z})$ are
invertible. Now assume, for instance, that $\Phi({z})h_n\to 0$
for some sequence $h_n\in\cH$, $\|h_n\|=1$. This together with
$0\in\rho({\sf N}_{\Phi,\Psi}({z},{z}))$ shows that for some
$\alpha>0$ one has
\[
  \alpha \le({\sf N}_{\Phi,\Psi}({z},{z})h_n,h_n)_\cH\to 0,
\]
a contradiction. Since $\ran \Phi({z})=\dom \cF({z})$
($\ran \Psi({z})=\ran \cF({z})$) is dense in $\cH$, one
concludes that $\Phi({z})$ must be invertible. A similar
argument shows that $\Psi({z})$ is invertible.
\end{proof}

We finish this section with some further properties of Nevanlinna
pairs. The next statement can be found e.g. from
\cite[Proposition~2.4]{DHMS1}.


\begin{lemma}
Two Nevanlinna pairs $\{\Phi,\Psi\}$ and $\{\Phi_1,\Psi_1\}$ are
equivalent if and only if $\Phi_1 ({z})=\Phi({z})\chi({z})$ and
$\Psi_1({z})=\Psi({z})\chi({z})$ for some operator function
$\chi(\cdot)\in\cB(\cH)$ which is holomorphic and invertible on
$\dC_+\cup\dC_-$.
\end{lemma}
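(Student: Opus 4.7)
The plan is to prove both directions by using condition (NP3), which guarantees that $\Psi(z) \pm i\Phi(z)$ is boundedly invertible on $\dC_\pm$. The sufficiency is immediate: if $\Phi_1(z) = \Phi(z)\chi(z)$ and $\Psi_1(z) = \Psi(z)\chi(z)$ with $\chi(z)$ bijective on $\cH$, then the parametrizing map $h \mapsto \chi(z)h$ is a bijection of $\cH$ onto itself, so
\[
 \{\,\{\Phi_1(z)h,\Psi_1(z)h\}:h\in\cH\,\}
 = \{\,\{\Phi(z)\chi(z)h,\Psi(z)\chi(z)h\}:h\in\cH\,\}
 = \cF(z),
\]
and the two pairs generate the same graph.

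For the necessity, suppose $\{\Phi,\Psi\}$ and $\{\Phi_1,\Psi_1\}$ generate the same $\cF(z)$. On $\dC_+$ I would set
\[
 \chi(z) := (\Psi(z)+i\Phi(z))^{-1}\bigl(\Psi_1(z)+i\Phi_1(z)\bigr),
\]
and analogously on $\dC_-$ using the minus sign. Since the graphs coincide, for each $h\in\cH$ there exists $h'\in\cH$ with $\Phi(z)h' = \Phi_1(z)h$ and $\Psi(z)h' = \Psi_1(z)h$; adding $i$ times the first equation to the second gives $(\Psi(z)+i\Phi(z))h' = (\Psi_1(z)+i\Phi_1(z))h$, so $h' = \chi(z)h$. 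This forces $\Phi_1(z) = \Phi(z)\chi(z)$ and $\Psi_1(z) = \Psi(z)\chi(z)$.

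The invertibility of $\chi(z)$ is obtained by the symmetric formula $\chi_1(z) := (\Psi_1(z)+i\Phi_1(z))^{-1}(\Psi(z)+i\Phi(z))$; the same argument shows $\Phi = \Phi_1 \chi_1$ and $\Psi = \Psi_1 \chi_1$, and combining the two relations yields $\Phi(\chi\chi_1 - I) = 0$ and $\Psi(\chi\chi_1 - I) = 0$. Applying (NP3) to $\{\Phi,\Psi\}$ one more time, $\Psi(z) + i\Phi(z)$ is injective, hence so is the pair, giving $\chi(z)\chi_1(z) = I$; the reverse composition is handled symmetrically. Holomorphy of $\chi(\cdot)$ on $\dC_+\cup\dC_-$ is immediate from the explicit formula, since $(\Psi(z)\pm i\Phi(z))^{-1}$ is a holomorphic $\cB(\cH)$-valued function on $\dC_\pm$ by (NP3).

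The only mildly subtle point is that $\dC_+\cup\dC_-$ is disconnected, so the two half-plane definitions of $\chi$ need not be restrictions of a single function on a connected domain; but the lemma only asserts holomorphy on $\dC_+\cup\dC_-$, so this causes no obstruction. Thus the main (minor) step is verifying that the candidate $\chi(z)$ built from (NP3) really intertwines the two pairs, which is a direct computation from the equality of graphs.
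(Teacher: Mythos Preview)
Your proof is correct and follows essentially the same strategy as the paper's: both arguments produce the intertwining operator $\chi(z)$ and then use (NP3) to establish its bounded invertibility and holomorphy on $\dC_\pm$. The only difference is that the paper invokes Douglas' lemma to obtain a bounded $\chi(z)$ with $T(z)=T_1(z)\chi(z)$ from the equality of ranges $\ran T(z)=\ran T_1(z)$, whereas you write $\chi(z)=(\Psi(z)+i\Phi(z))^{-1}(\Psi_1(z)+i\Phi_1(z))$ explicitly; your route is marginally more direct, since the explicit formula makes boundedness, invertibility, and holomorphy transparent from (NP3) without a separate appeal to Douglas' lemma.
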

\begin{proof}
By definition the Nevanlinna pairs $\{\Phi,\Psi\}$ and
$\{\Phi_1,\Psi_1\}$ are equivalent if and only if the ranges of the
block operators
\[
T({z})=\begin{pmatrix}\Phi({z})\\
\Psi({z})\end{pmatrix}, \quad \wt
T_1({z})=\begin{pmatrix}\Phi_1({z})\\ \Psi_1({z})\end{pmatrix}
\]
coincide with the graph of the corresponding Nevanlinna family
$\cF(z)$, $z\in\cmr$. It is well known (from Douglas' lemma) that
the equality $\ran T({z})=\ran T_1({z})$ implies the existence of a
bounded operator $\chi(z)\in\cB(\cH)$ such that
$T({z})=T_1({z})\chi(z)$. Thus, $\Phi({z})=\Phi_1({z})\chi(z)$ and
$\Psi({z})=\Psi_1({z})\chi(z)$, and hence
\[
 \Phi({z})\pm i\Psi({z})=(\Phi_1({z})\pm i\Psi_1({z}))\chi(z), \quad
 z\in\cmr.
\]
In view of (NP3) this implies that $\chi(z)$ is bounded with bounded
inverse and holomorphic in $z\in\dC_\pm$.
\end{proof}

As follows from Proposition~\ref{NPpr} the conditions (NP3) and
(NF3) can be replaced, for instance, by
\[
  0\in \rho(\Psi({z})+{w}\Phi({z}))\quad\mbox{and}\quad
  0\in \rho(\cF({z})+{w} I),
\]
respectively, for some (equivalently for every) ${w} \in \dC_\pm$
and for all ${z}$ in the same halfplane as ${w}$. Moreover, the
following more general statement holds.

\begin{proposition}
\label{NPpr} Let $\{\Phi(\cdot),\Psi(\cdot)\}$ be a Nevanlinna pair,
let $W$ be a unitary operator in the Kre\u{\i}n space
$(\cH^2,J_\cH)$, and let
\[
\begin{pmatrix}\wt\Phi({z})\\
\wt\Psi({z})\end{pmatrix} =W\begin{pmatrix}\Phi({z})\\
\Psi({z})\end{pmatrix}.
\]
Then $\{\wt\Phi(\cdot),\wt\Psi(\cdot)\}$ is also a Nevanlinna pair.
In particular, if $X=X^*\in\cB(\cH)$, $Y$ is an invertible operator in
$\cB(\cH)$ and $M(\cdot)\in R^u[\cH]$, each of the following pairs is
also a Nevanlinna pair:
\begin{equation}
\label{NPs} \{\Phi(\cdot),\Psi(\cdot)+X\Phi(\cdot)\},\quad
\{Y^{-1}\Phi(\cdot),Y^{*}\Psi(\cdot)\},\quad
\{-\Psi(\cdot),\Phi(\cdot)\},\quad
\{\Phi(\cdot),\Psi(\cdot)+M(\cdot)\Phi(\cdot)\}.
\end{equation}
\end{proposition}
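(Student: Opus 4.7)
The strategy is to interpret the three Nevanlinna pair conditions inside the Kre\u{\i}n space $(\cH^2, J_\cH)$ and to exploit that a $J_\cH$-unitary $W$, i.e.\ one satisfying $W^*J_\cH W = J_\cH$ on $\cH^2$, preserves all the relevant structure automatically. Writing $T(z)=\begin{pmatrix}\Phi(z)\\ \Psi(z)\end{pmatrix}\colon\cH\to\cH^2$, a direct computation yields
\[
(J_\cH T(z)h, T(z)h')_{\cH^2} = -i((\Phi(z)^*\Psi(z)-\Psi(z)^*\Phi(z))h,h')_\cH,
\]
so that (NP1) is equivalent to $(J_\cH T(z)h, T(z)h)/\IM z \geq 0$, and an analogous calculation shows that (NP2) is equivalent to $(J_\cH T(z)h, T(\bar z)h') = 0$ for all $h,h'\in\cH$. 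Since $(J_\cH Wx, Wy) = (J_\cH x, y)$, both conditions transfer immediately to $\wt T(z) = WT(z)$; holomorphy of $\wt\Phi, \wt\Psi$ is automatic as $W$ is constant, and $\ker\wt\Phi(z)\cap\ker\wt\Psi(z) = \ker T(z) = \{0\}$ follows from injectivity of $W$ combined with (NP3) for $\{\Phi,\Psi\}$.

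For (NP3) of the new pair the cleanest route is geometric: combined with (NP1), (NP3) is equivalent to the graph $\cF(z) = \ran T(z)$ being a maximal nonnegative (resp.\ nonpositive) subspace of $(\cH^2,J_\cH)$ for $z\in\dC_+$ (resp.\ $\dC_-$); this is the content of Proposition~\ref{CRIT}(v). A $J_\cH$-unitary $W$ is a bijection of $\cH^2$ preserving the indefinite form and therefore maps maximal nonnegative subspaces to maximal nonnegative subspaces. Thus $\wt\cF(z) = W\cF(z)$ is maximal dissipative (accumulative) in the corresponding half-plane, and a second application of Proposition~\ref{CRIT}(v) delivers (NP3) for $\{\wt\Phi,\wt\Psi\}$, completing the proof of the main assertion.

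For the three explicit pairs (1)--(3) it then suffices to recognise the underlying $W$ and to verify $J_\cH$-unitarity: with
\[
W_1=\begin{pmatrix} I & 0 \\ X & I\end{pmatrix},\quad W_2=\begin{pmatrix} Y^{-1} & 0 \\ 0 & Y^*\end{pmatrix},\quad W_3=\begin{pmatrix} 0 & -I \\ I & 0\end{pmatrix},
\]
a short $2\times 2$ block computation (using $X=X^*$ for (1) and $Y$ invertible for (2)) confirms $W_j^*J_\cH W_j = J_\cH$ for $j=1,2,3$, and the general assertion finishes these cases.

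The pair in (4) is where I expect the main obstacle. The natural candidate $W(z) = \begin{pmatrix} I & 0 \\ M(z) & I\end{pmatrix}$ is \emph{not} $J_\cH$-unitary (that would require $M(z) = M(z)^*$), so (NP1)--(NP3) must be checked by hand. (NP1) is settled by the identity
\[
\IM[\Phi(z)^*(\Psi(z) + M(z)\Phi(z))] = \IM[\Phi(z)^*\Psi(z)] + \Phi(z)^*(\IM M(z))\Phi(z),
\]
both summands having the correct sign since $M\in R^u[\cH]\subset R[\cH]$ implies $\IM M(z)/\IM z\geq 0$. (NP2) reduces, after the obvious cancellations, to the symmetry $M(\bar z)^* = M(z)$. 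For (NP3) I would invoke the standard fact that $\cF(z)+M(z)$ is maximal dissipative for $z\in\dC_+$, being a bounded dissipative perturbation of a maximal dissipative relation; since $\{\Phi, \Psi + M\Phi\}$ parametrises this relation and inherits $\ker\Phi(z)\cap\ker(\Psi(z)+M(z)\Phi(z)) = \{0\}$ from $\{\Phi,\Psi\}$, Proposition~\ref{CRIT}(v) yields $0\in\rho(\Psi(z)+M(z)\Phi(z)+i\Phi(z))$, and the case $z\in\dC_-$ is identical.
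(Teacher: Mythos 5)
Your argument is correct, and for the general statement and the first three pairs it is essentially the paper's proof: the paper likewise writes $T(z)=\begin{pmatrix}\Phi(z)\\ \Psi(z)\end{pmatrix}$, observes that the kernel ${\sf N}_{\Phi,\Psi}(z,w)=\frac{T(w)^*J_\cH T(z)}{-i(z-\bar w)}$ is unchanged when $T$ is replaced by $WT$ (giving (NP1)--(NP2)), obtains (NP3) from the fact that a Kre\u{\i}n-space unitary maps the maximal nonnegative subspace $\cF(z)=\ran T(z)$ onto a maximal nonnegative subspace, and then uses exactly your matrices $W_1,W_2,W_3$ for the pairs in \eqref{NPs}. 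The only genuine divergence is the verification of (NP3) for the fourth pair $\{\Phi,\Psi+M\Phi\}$, where, as you anticipated, the transformation is not $J_\cH$-unitary. The paper fixes $z\in\dC_+$, sets $X=\RE M(z)$, $Y=\IM M(z)$, applies the already established transformations to produce auxiliary pairs $\{\Phi_1,\Psi_1\}=\{(Y+I)^{1/2}\Phi,(Y+I)^{-1/2}\Psi\}$ and $\{\Phi_2,\Psi_2\}=\{\Phi_1,\Psi_1+(Y+I)^{-1/2}X(Y+I)^{-1/2}\Phi_1\}$, and reads off invertibility from the factorization $\wt\Psi(z)+i\wt\Phi(z)=(Y+I)^{1/2}(\Psi_2(z)+i\Phi_2(z))$; you instead invoke the standard perturbation fact that $\cF(z)+M(z)$, a bounded dissipative perturbation of a maximal dissipative relation, is again maximal dissipative (a Neumann-series argument on $(I+(\cF(z)-\zeta)^{-1}M(z))$ for $|\IM\zeta|>\|M(z)\|$ justifies this also for relations), check the trivial joint kernel, and apply Proposition~\ref{CRIT}(v). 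Both routes are sound; yours is more direct and makes transparent that only $\IM M(z)/\IM z\ge 0$ and boundedness of $M(z)$ are used, while the paper's stays entirely within the transformation calculus it has just established and avoids appealing to the perturbation lemma.
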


\begin{proof}
Consider $\cF({z})$ and $\wt\cF({z})$ as the ranges of
the block operators
\[
T({z})=\begin{pmatrix}\Phi({z})\\
\Psi({z})\end{pmatrix}, \quad \wt
T({z})=\begin{pmatrix}\wt\Phi({z})\\
\wt\Psi({z})\end{pmatrix}.
\]
Then the kernel ${\sf N}_{\Phi, \Psi} ({z},{w})$ can be
represented as follows:
\begin{equation}
\label{Nkern2}
 {\sf N}_{\Phi, \Psi} ({z},{w})
  =\frac{T({w})^*J_\cH T({z})}{-i({z}-\bar{{w}})}.
\end{equation}
The properties (NP1), (NP2) for $\{\wt\Phi(\cdot),\wt\Psi(\cdot)\}$
are implied by the equalities
\begin{equation}
\label{NN2}
 {\sf N}_{\wt\Phi, \wt\Psi} ({z},{w})
  =\frac{\wt T({w})^*J_\cH\wt T({z})}{-i({z}-\bar{{w}})}
  =\frac{ T({w})^*J_\cH T({z})}{-i({z}-\bar{{w}})}
  ={\sf N}_{\Phi, \Psi} ({z},{w}).
\end{equation}
The graph $\cF({z})$ can be treated as a a maximal nonnegative
subspace of the Kre\u{\i}n space $(\cH^2,J_\cH)$ for ${z}\in\dC_+$;
see \cite[Section~2]{DHMS06}. Since $\wt \cF({z})$ is the range of
$\wt T({z})$ it has the same property and, therefore, $\wt
\cF(\cdot)\in\wt R_\cH$. By Proposition~\ref{propnp}
$\{\wt\Phi,\wt\Psi\}$ is a Nevanlinna pair.

Applying this statement to the pair $\{\Phi,\Psi\}$ and the matrices
\[
W=\begin{pmatrix} I & 0\\X & I\end{pmatrix},\quad W=\begin{pmatrix}
Y^{-1} & 0\\0 & Y^*\end{pmatrix},\quad W=\begin{pmatrix} 0 & -I\\I &
0\end{pmatrix}
\]
one shows that the first three pair in~\eqref{NPs} are Nevanlinna
pairs. The properties (NP1), (NP2) for the pair
$\{\wt\Phi(\cdot),\wt\Psi(\cdot)\}
=\{\Phi(\cdot),\Psi(\cdot)+M(\cdot)\Phi(\cdot)\}$ are implied by the
identity
\[
 {\sf N}_{\wt\Phi, \wt\Psi} ({z},{w})
 = {\sf N}_{\wt\Phi, \wt\Psi} ({z},{w})
+\Phi({w})\frac{M({z})-M({w})^*}{{z}-\bar{w}}\Phi({z}).
\]
To show that the operator $\wt\Psi({z})+i \wt\Phi({z})$ is
invertible for some ${z}\in\dC_+$, set $X=\mbox{Re }M({z})$,
$Y=\mbox{Im }M({z})$ and apply the previous statement to the
pairs:
\[
\{\Phi_1({z}),\Psi_1({z})\}=
\{(Y+I)^{1/2}\Phi({z}),(Y+I)^{-1/2}\Psi({z})\},
\]
\[
\{\Phi_2({z}),\Psi_2({z})\}=
\{\Phi_1({z}),\Psi_1({z})+(Y+I)^{-1/2}X(Y+I)^{-1/2}\Phi_1({z})\}.
\]
Since these pairs are maximal dissipative it follows that the
operator
\[
\wt\Psi({z})+i \wt\Phi({z})=(Y+I)^{1/2}(\Psi_2({z})+i
\Phi_2({z}))
\]
is also invertible.
\end{proof}

\begin{remark}\label{later0}
The connection between Nevanlinna pairs
$\{\Phi(\cdot),\Psi(\cdot)\}$ and Nevalinna families $\cF(\cdot)\in
\wt R(\cH)$ in Proposition~\ref{propnp} implies some invariance
properties for the pair $\{\Phi(\cdot),\Psi(\cdot)\}$ via
Theorem~\ref{invar}. We indicate here a couple of the underlying
connections.
\begin{enumerate}
\def\labelenumi{\rm (\roman{enumi})}
\item If $\{\Phi(\cdot),\Psi(\cdot)\}$ corresponds to $\cF(\cdot)$ then the transformed
pair $\{\Psi(\cdot), -\Phi(\cdot)\}$ corresponds to the inverse
$-\cF(z)^{-1}$ and, moreover,
\[
 {\mathbf N}_{\Psi,-\Phi}(z,w)={\mathbf N}_{\Phi,\Psi}(z,w),\quad z,w\in\cmr.
\]
\item The kernels of $\Phi(z)$ and $\Psi(z)$ do not depend on
$z\in\cmr$; namely,
\[
 \mul \cF(z)=\mul (\cF(z)\pm iI)=\mul \{\Phi(z), \Psi(z)\pm i
 \Phi(z)\}=\ker \Phi(z),
\]
and, similarly, $\ker \Phi(z)=\ker \cF(z)$.

\item It is not difficult to see that the condition
$0\in \rho({\mathbf N}_{\Phi,\Psi}(z,z))$ implies that $\ker
\Phi(z)=\ker \Psi(z)= 0$ and that $\ran \Phi(z)$ and $\ran \Psi(z)$
are closed. Moreover,
\[
(\ran \Phi(z))^\perp=(\dom \cF(z))^\perp=\mul \cF(z)^*=\mul
\cF(z)=\ker \Phi(z)=\{0\}.
\]
Consequently, $\ran \Phi(z_0)=\cH$ and hence $0\in \rho(\Phi(z))$.
Then in view of (i) we have also $0\in \rho(\Psi(z))$. The
Nevanlinna kernel for $\cF(\cdot)$ as defined in \eqref{eq:Kern_F}
is given by
\[
    {\mathbf N}_\cF(z,w) 
    =(\Phi(w))^{-*}
    ({\mathbf N}_{\Phi,\Psi}(z,w))(\Phi(z))^{-1},    \quad z\ne\bar w\,\,
    z,w\in\cmr,
\]
and there is a similar formula for $-\cF(\cdot)^{-1}$. 
Therefore,
\[
 0\in \rho({\mathbf N}_{\Phi,\Psi}(z,z)) \quad \Leftrightarrow\quad
 \cF(\cdot)\in R^u[\cH] \quad \Leftrightarrow\quad -\cF(\cdot)^{-1}\in R^u[\cH].
\]
\end{enumerate}
\end{remark}



\section{Invariance theorems for harmonic operator-valued functions and quadratic forms}

Let us recall from \cite[Section 7.1]{Kato} the definition of a
boundedly holomorphic function $T(\cdot)$ with values in the set
$\cC(\cH)$ of closed (not necessarily bounded) operators acting in
$\cH$.
  \begin{definition}\label{def_Kato}
Let $T(\kappa)$ be a family of operators with values in $\cC(\cH)$
and defined in a neighborhood of $\kappa_0\in\mathbb C$ and let
$\zeta\in\rho\bigl(T(\kappa_0)\bigr)$. The family $T(\cdot)$ is
called holomorphic at $\kappa_0\in\C$ if
$\zeta\in\rho\bigl(T(\kappa)\bigr)$ and the resolvent
$R(\zeta,\kappa)=\bigl(T(\kappa)-\zeta\bigr)^{-1}$ is boundedly
holomorphic in $\kappa$ for $|\kappa-\kappa_0|$ small enough.
    \end{definition}

It is shown in \cite[Theorem 7.1.3]{Kato} that in this case the resolvent
$R(\zeta,\kappa)$ of the family $T(\kappa)$  is holomorphic in both variables
$(\zeta,\kappa)$ in an appropriate domain in $\C^2$.

The following definition of holomorphic $R$-function is crucial in the sequel.
   \begin{definition}\label{def_strong_holomorphy}
A function $F\in R(\cH)$ will be called a \emph{strongly
holomorphic function} 
if the following two conditions are satisfied:
  \begin{enumerate}
  \item [(i)] the set
\begin{equation}\label{6.2A}
 \mathcal D(F) := \cap_{z\in\mathbb C_+}\dom F(z)\qquad \text{is dense in}\qquad  \cH.
\end{equation}
  \item [(ii)] vector function $F(z)u$ is holomorphic in a domain
$\Omega\subset\mathbb C_+\cup \R$ for each $u\in\mathcal D(F)$.
\end{enumerate}
\end{definition}

  \begin{remark}
Note that in general, the domain of holomorphy $\Omega$ in $(ii)$
might be broader than the corresponding domain in Definition
\ref{def_Kato}. Namely,  in general conditions (i) and (ii) do not
imply the local boundedness of the resolvent $(F(\cdot)+i)^{-1}$ at
real points. For instance, consider the function
   \begin{equation*}
F(z)=Az, \quad 0 \le A=A^*\in \cC(\cH)\setminus {\mathcal B}(\cH)
,\quad  \dom F(z)=\dom A \not = \cH, \quad z\in\mathbb
C\setminus\{0\},
\end{equation*}
and $\dom (F(0))=\cH$. It is easily seen that $\IM F(z)= Ay\ge 0$
for $z=x+iy\in\mathbb C_+$ and conditions  (i), (ii) are satisfied
with $\Omega = \mathbb C_+\cup \R$  and $\mathcal D(F)= \dom A$.

However, $F(\cdot)$ is not holomorphic at zero in the sense of Definition
\ref{def_Kato}.  Indeed, $F(z)^{-1} = z^{-1}A^{-1}$ is not boundedly holomorphic at zero
even if $A^{-1}\in\cB(\cH)$.

This example  shows that the domain of holomorphy $\Omega$ in Definition
\ref{def_strong_holomorphy}$(ii)$ might be broader than the corresponding domain in
Definition \ref{def_Kato}.
    \end{remark}

An unbounded Nevanlinna function is not in general strongly
holomorphic. In fact, in the next example an extreme situation of a
Nevanlinna function is constructed, such that the domains of
$F(\lambda)$ and $F(\mu)$ have a zero intersection:
\[
 \dom F(\lambda)\cap \dom F(\mu)=\{0\} \text{ for all }
 \lambda,\mu \in \cmr.
\]

\begin{example}\label{Ex4A}
Let $A$ and $B\ge 0$ be two bounded selfadjoint operators on a
Hilbert space $\cH$ with $\ker A=\ker B=\{0\}$ and such that
\[
 \ran A\cap \ran B=\{0\}.
\]
Then $M(z)=A-\frac{1}{z}\, B$, $z\neq 0$, is a Nevanlinna function
from the class $R[\cH]$. The transform $F(z):=-M(z)^{-1}$ is an
operator-valued Nevanlinna function in the class $R(\cH)$:
\[
 F(z):=-\left(A-\frac{1}{z}\, B\right)^{-1}, \quad z\in\cmr.
\]
To consider the domain of $F(z)$ at two points $\lambda,\mu\in\cmr$
and assume that there is a nonzero vector $k\in \dom
F(\lambda)\cap\dom F(\mu)$. This means that $k=M(\lambda)f=M(\mu)g$
for some $f,g\in\cH$, i.e.,
\[
 (A -\frac{1}{\lambda}\,B)f=(A -\frac{1}{\mu}\,B)g
\]
or, equivalently,
\[
 A(f -g)=B \left(\frac{1}{\lambda}f-\frac{1}{\mu}g\right).
\]
Since $\ran A\cap\ran B=\{0\}$ and $\ker A=\ker B=\{0\}$, we get
$f=g$ and $\frac{1}{\lambda}f=\frac{1}{\mu}g$ which leads to
$\lambda=\mu$. Therefore,
\[
  \dom F(\lambda)\cap \dom F(\mu)=\{0\}, \quad \lambda\neq \mu,\,\,
 \lambda,\mu \neq 0.
\]

Recall that there exist bounded nonnegative selfadjoint operators on
a Hilbert space $\cH$ with $\ker A=\ker B=\{0\}$ and such that
\[
 \ran A\cap \ran B=\{0\}, \quad \ran A^{1/2}=\ran B^{1/2};
\]
see e.g. \cite[Example, p.278]{FW} for an example of such operators.
Then it follows from $\ran A^{1/2}=\ran B^{1/2}$ that there exists a
bounded and boundedly invertible positive operator $C$ such that
\[
 A=B^{1/2} C B^{1/2}.
\]
Hence, this choice of $A$ and $B$ implies that $M(z)=B^{1/2} (C-
\frac{1}{z})B^{1/2}$ and
\[
 F(z)=-B^{-1/2} \left(C- \frac{1}{z}\right)^{-1}B^{-1/2}=B^{-1/2}\wt F(z)B^{-1/2},
\]
where $\wt F(z)=-(C- \frac{1}{z})^{-1}$ satisfies, $\wt
F(\cdot),-\wt F^{-1}(\cdot)\in R^u[\cH]$; cf. Remark
\ref{later0}~(iii). Consequently, for every $z\in \cmr$ the form
\[
 \st_{F(z)}[u,v]:=\frac{1}{z-\bar z}\,[(F(z)u,v)-(u,F(z)v)]
 =({\mathbf N}_{\wt F}(z,z)B^{-1/2}u,B^{-1/2}v), \quad u,v\in \dom F(z),
\]
is closable and its closure has the same formula which is defined on
a constant domain $\ran B^{1/2}$. Nevanlinna functions with this
property are studied systematically in a forthcoming paper
\cite{DHM15} by the authors and they are called form-domain
invariant Nevanlinna functions.
\end{example}

The general definition of the class of form-domain invariant
Nevanlinna families $\cF(\cdot)\in \wt R(\cH)$ reads as follows; cf.
\cite{DHM15} for a treatment of such functions as Weyl functions of
boundary triplets and boundary pairs (or boundary relations).

\begin{definition}\label{Nevforminv}
A Nevanlinna family $\cF(\cdot)\in \wt R(\cH)$ is said to be
form-domain invariant if its operator part $F_s(\cdot)\in R(\cH_s)$
is form-domain invariant, which means that the quadratic form
$\st_{F_s(\lambda)}$ in $\cH_s$ generated by the imaginary part of
$F_s(\lambda)$ via
\[
 \st_{F_s(\lambda)}[u,v]=\frac{1}{\lambda-\bar\lambda}\,[(F_s(\lambda)u,v)-(u,F_s(\lambda)v)],
\]
is closable for all $\lambda\in\cmr$ and the closure of the form
$\st_{F_s(\lambda)}$ has a constant domain.
\end{definition}

In what follows the set of nonnegative harmonic functions in
$\mathbb C_+$ is denoted by $Har_+(\mathbb C_+)$. In this section we
will systematically make use of the classical Harnack's inequality:
Given a pair of points $z_1, z_2\in\mathbb C_+$, 
there exists positive constants $c_j = c_j(z_1,z_2),\  j\in\{1,2\},$ such that
     \begin{equation}\label{6.0}
c_1 h(z_1)\le h(z_2)\le c_2 h(z_1), \qquad h(\cdot) \in
Har_+(\mathbb C_+).
     \end{equation}
It is emphasized that the constants $c_1$ and $c_2$ do not depend on
$h(\cdot)\in Har_+(\mathbb C_+)$.

With any strongly holomorphic $R$-function $F(\cdot)\in R(\cH)$ one
associates a family of the quadratic forms $\mathfrak t(z)[\cdot]$
given by
      \begin{equation}\label{6.1}
\mathfrak t(z)[u] := \IM (F(z)[u]) := \IM\bigl(F(z)u, u\bigr)\ge 0,
\qquad u\in\dom\bigl(F(z)\bigr), \quad z\in\mathbb C_+.
      \end{equation}
Equipping $\dom F(z)$ with the inner product
\[
(u, v)_{+,z} = (u, v)_{\cH} + \mathfrak t(z)[u, v],\qquad u,
v\in\dom F(z),
\]
we obtain a pre-Hilbert space $\cH'_+(z)$. The corresponding energy
space is denoted by $\cH_+(z)$, i.e., it is the completion of
$\cH'_+(z)$ with respect to the norm $\|\cdot\|_{+,z}$.

Recall that the form $\mathfrak t(z)$ is called closable if the
norms $\|\cdot\|_{\cH}$ and $\|\cdot\|_{+,z}$ are compatible. The
latter means that the completion  of $\cH'_+(z)$ holds within $\cH,$
i.e. $\cH_+(z)\subset\cH.$  The form $\mathfrak t(z)$ is called
closed if it is closable and  $\cH_+(z) = \cH'_+(z)$, i.e.
$\cH'_+(z)$ is a Hilbert space.

In the following proposition we investigate  certain  stability
properties of the family \eqref{6.1}.
   \begin{proposition}\label{prop_form_stability}
Let $F(\cdot)\in R(\cH)$ be strongly holomorphic. Assume in addition
that for some $z_0\in\mathbb C_+$ the form $\mathfrak t(z_0)$ is
closable and  $\mathcal D(F)$ is a core for its closure $\overline
{\mathfrak t}(z_0)$. Then:

\item[\;\;\rm (i)]
 The form $\mathfrak t(z)$ is closable for any $z\in\mathbb C_+;$

\item[\;\;\rm (ii)]
$\mathcal D(F)$ is a core for  the closure $\overline {\mathfrak
t}(z)$ of the form ${\mathfrak t}(z)$ for each $z\in\mathbb C_+.$
Moreover,  the corresponding energy spaces $\cH_+(z)$, $z\in\mathbb
C_+,$ coincide algebraically and topologically,
  \begin{equation}\label{4.6}
\cH_+(z) = \cH_+(z_0), \qquad z\in\mathbb C_+.
  \end{equation}
In particular,
  \begin{equation}\label{4.6A}
\mathcal D[F] := \cap_{z\in \C_+} \cH_+(z) = \cH_+(z_0). 
  \end{equation}
\item[\;\;\rm (iii)]  For any pair $u,v\in \mathcal D[F]$ the function  $\overline {\mathfrak t}(\cdot)[u,v]$ is a
harmonic (hence real analytic) function in $\C_+$.
   \end{proposition}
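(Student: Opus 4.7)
The plan is to prove all three statements from a single analytic input: for every $u \in \mathcal D(F)$, strong holomorphy of $F$ makes the scalar function $(F(\cdot)u,u)$ holomorphic on $\mathbb C_+$ (Definition \ref{def_strong_holomorphy}(ii)), so $\mathfrak t(z)[u] = \IM(F(z)u, u)$ is a nonnegative harmonic function of $z$. Applying \eqref{6.0} pointwise in $u$ and noting that the Harnack constants $c_1, c_2$ do not depend on $u$, I would first extract the basic estimate
\[
c_1(z,z_0)\,\mathfrak t(z_0)[u] \le \mathfrak t(z)[u] \le c_2(z,z_0)\,\mathfrak t(z_0)[u], \qquad u \in \mathcal D(F),\ z \in \mathbb C_+.
\]
Hence the norms $\|\cdot\|_{+,z}$ and $\|\cdot\|_{+,z_0}$ are equivalent on $\mathcal D(F)$ for every $z \in \mathbb C_+$. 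By polarization the same is true for the sesquilinear form on $\mathcal D(F)$.

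For (i)--(ii) I would use the hypothesis that $\mathcal D(F)$ is a core for $\overline{\mathfrak t}(z_0)$, which says exactly that the closure of the restriction $\mathfrak t(z_0) \!\uphar\! \mathcal D(F)$ in $\cH$ is $\overline{\mathfrak t}(z_0)$, with form-domain $\cH_+(z_0) \subset \cH$. The norm equivalence above then transfers this closability to every $z$: the completion of $\mathcal D(F)$ in $\|\cdot\|_{+,z}$ coincides as a subset of $\cH$ with $\cH_+(z_0)$, equipped with an equivalent norm, giving \eqref{4.6} and \eqref{4.6A}. To promote closability from $\mathcal D(F)$ to the full $\dom F(z)$ and to identify $\mathcal D(F)$ as a core, I would exploit that any $v \in \mathcal D(F)$ lies also in $\dom F(\bar z)$, so (NF2) gives $(F(z))^* v = F(\bar z)v$ and hence the polarization identity
\[
\mathfrak t(z)[u, v] = \frac{1}{2i}\bigl(u, F(\bar z)v - F(z)v\bigr), \qquad u \in \dom F(z),\ v \in \mathcal D(F),
\]
whose right-hand side is continuous in $u \in \cH$. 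For any Cauchy sequence $(u_n) \subset \dom F(z)$ in $\|\cdot\|_{+,z}$ with $u_n \to 0$ in $\cH$, this forces $\mathfrak t(z)[u_n, v] \to 0$ for all $v \in \mathcal D(F)$; feeding this through the already-established closed form on $\cH_+(z_0)$, in which $\mathcal D(F)$ is dense, then yields $\mathfrak t(z)[u_n] \to 0$. This will be the main technical hurdle: one has to show that the norm-equivalence on $\mathcal D(F)$, together with (NF2) and the density of $\mathcal D(F)$ in $\cH$, forces $\dom F(z)$ into the form-domain $\cH_+(z_0)$.

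For (iii) I would argue as follows. For $u, v \in \mathcal D(F)$ the polarization above and strong holomorphy make
$\overline{\mathfrak t}(z)[u,v] = \tfrac{1}{2i}[(F(z)u,v) - (u, F(z)v)]$
the sum of a holomorphic and an antiholomorphic function of $z$, hence harmonic on $\mathbb C_+$. For general $u, v \in \mathcal D[F] = \cH_+(z_0)$, choose approximants $u_n, v_n \in \mathcal D(F)$ in $\|\cdot\|_{+,z_0}$. The Cauchy--Schwarz inequality for the closed form, combined with the norm equivalence, gives
\[
|\overline{\mathfrak t}(z)[u_n, v_n] - \overline{\mathfrak t}(z)[u, v]| \le C(z)\bigl(\|u_n - u\|_{+,z_0}\|v_n\|_{+,z_0} + \|u\|_{+,z_0}\|v_n - v\|_{+,z_0}\bigr),
\]
and a Harnack chain argument upgrades \eqref{6.0} to uniform control, keeping $C(z)$ bounded on every compact subset of $\mathbb C_+$. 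The harmonic sequence $\overline{\mathfrak t}(\cdot)[u_n, v_n]$ therefore converges locally uniformly on $\mathbb C_+$, so the limit $\overline{\mathfrak t}(\cdot)[u, v]$ is harmonic, and therefore real analytic, on $\mathbb C_+$.
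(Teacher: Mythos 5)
Your main line is exactly the paper's: for $u\in\mathcal D(F)$ the function $z\mapsto\mathfrak t(z)[u]=\IM(F(z)u,u)$ is nonnegative and harmonic by strong holomorphy, Harnack's inequality \eqref{6.0} with $u$-independent constants gives the two-sided estimate $c_1\,\mathfrak t(z_0)[u]\le \mathfrak t(z)[u]\le c_2\,\mathfrak t(z_0)[u]$ on $\mathcal D(F)$ (this is \eqref{6.4}), this transfers closability from $z_0$ to every $z$, identifies all energy spaces with $\cH_+(z_0)$, and (iii) follows from locally uniform convergence of harmonic functions together with polarization. Up to that point your argument and the paper's proof coincide, including the Harnack-chain remark needed for local uniformity.

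The step where you go beyond this is also where your sketch breaks down. First, the inclusion $\mathcal D(F)\subset\dom F(\bar z)$ is not available: by \eqref{6.2A} the intersection defining $\mathcal D(F)$ runs only over $z\in\C_+$, while $\dom F(\bar z)=\dom F(z)^*$, and nothing in the hypotheses forces $\mathcal D(F)$ into $\dom F(z)^*$; compare Proposition \ref{prop6.2}, where $\dom F(z)\subset\dom F(z)^*$ is obtained only under the extra assumption $F_I(z_0)\in\cB(\cH)$. Second, even granting the polarization identity, the concluding inference --- from $\mathfrak t(z)[u_n,v]\to0$ for each fixed $v\in\mathcal D(F)$ to $\mathfrak t(z)[u_n]\to0$ --- does not follow: the standard closability argument would require approximating the tail terms $u_m\in\dom F(z)$ by elements of $\mathcal D(F)$ in the $\|\cdot\|_{+,z}$-norm, i.e.\ precisely the form-density of $\mathcal D(F)$ in $(\dom F(z),\|\cdot\|_{+,z})$ you are trying to prove; you name this the ``main technical hurdle'' but do not resolve it, so as written it is a genuine gap. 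For comparison, the paper does not attempt this upgrade at all: its proof verifies the closability criterion only along sequences $u_n\in\mathcal D(F)$ and constructs $\cH_+(z)$ as the completion of $\mathcal D(F)$, i.e.\ it establishes closability of $\mathfrak t(z)\uphar\mathcal D(F)$ and the equality of the resulting energy spaces, leaving the passage to the full domain $\dom F(z)$ implicit. So your proposal matches the paper on everything the paper actually argues, and the unfinished part would need a different mechanism (some control of $\dom F(z)$ relative to $\dom F(z)^*$ or to $\cH_+(z_0)$), not the duality route you sketched.
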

\begin{proof}
(i) and (ii).  Lets us show that the form $\mathfrak t_z$ is
closable for any $z\in\mathbb C_+$. Assume that $h_n\in\mathcal D$
and
    \begin{equation}\label{6.3}
\lim_{n\to\infty}\|u_n\| = 0 \quad  \text{and} \quad \mathfrak
t(z)[u_n - u_m]\to 0\quad  \text{as} \quad n,m\to\infty.
  \end{equation}
Then by the Harnack's inequality there exist positive constants $c_1 = c_1(z_0,z),$
$c_2= c_2(z_0,z),$ depending only on $z_0,z,$ and such that
    \begin{equation}\label{6.4}
0\le  c_1\mathfrak t(z_0)[u_n - u_m] \le \mathfrak t(z)[u_n - u_m]
\le c_2 \mathfrak t(z_0)[u_n - u_m], \quad u_n\in\mathcal D(F).
    \end{equation}
Combining \eqref{6.3} with the first inequality in  \eqref{6.4} we
get $ \mathfrak t(z_0)[u_n - u_m]\to 0$ as $n,m\to\infty$. Since in
addition  $\lim_{n\to\infty}\|u_n\| = 0$ and the form $\mathfrak
t(z_0)$ is closable, one has $\mathfrak t(z_0)[u_n]\to 0$ as
$n\to\infty$. In turn, the second  inequality in \eqref{6.4}  with
$u_m=0$ yields
 \begin{equation}\label{6.5}
\lim_{n\to\infty}\mathfrak t(z)[u_n] = 0,\qquad  z\in\mathbb C_+.
 \end{equation}
Thus, \eqref{6.3} implies \eqref{6.5}.   This means the closability
of $\mathfrak t_z$, hence the identical embedding
$\cH'_+(z)\hookrightarrow\cH$ is extended to a (continuous)
embedding of the energy space $\cH_+(z)$ into $\cH$. Moreover, it
follows from \eqref{6.4} that the norms in $\cH'_+(z)$ and
$\cH'_+(z_0)$ are equivalent. Hence completing the spaces
$\cH'_+(z)$ and $\cH'_+(z_0)$ we conclude that $\cH_+(z)$ and
$\cH_+(z_0)$ coincide algebraically and topologically.

(iii) Since $\cD(F)$ is a core for  $\overline {\mathfrak t}(z_0)$,
it follows form the definition of the closure that for any $u\in
\mathcal D[F] = \cH_+(z_0)$ there exists $u_n\in \cD(F)$ such that
$$
\overline {\mathfrak t}(z_0)[u] = \lim_{n\to\infty} \mathfrak
t(z_0)[u_n], \quad u_n\in\mathcal D(F).
$$
It follows from \eqref{6.4} (with $u_m=0$) that  $\overline
{\mathfrak t}(z)[u] = \lim_{n\to\infty} \mathfrak t(z)[u_n]$
uniformly on compact subsets of $\C_+.$ So, by the first Harnack
theorem, $\overline {\mathfrak t}(\cdot)[u]$ is a nonnegative
harmonic function in $\C_+.$ Using the polarization identity one
proves that $\overline {\mathfrak t}(\cdot)[u,v]$ is also harmonic
for any pair $u, v\in \mathcal D[F].$
  \end{proof}
Proposition \ref{prop_form_stability}  makes it possible to introduce the imaginary part
of the function $F(\cdot)$.
  \begin{definition}\label{def_imaginary_part}
Let $F(\cdot)\in R(\cH)$ satisfy the conditions of Proposition
\ref{prop_form_stability}.   Denote by $F_I(z)$, $z\in \mathbb C_+,$
the  nonnegative self-adjoint  operator associated with the closed
form $\overline {\mathfrak t}(z)$ in accordance with the first
representation theorem (see \cite[Theorem 6.2.1]{Kato}).
   \end{definition}
Note that the operator $F_I(z) = F_I(z)^*$  is a self-adjoint extension of the operator
$$
F'_I(z) :=  (F(z) -F(z)^*)/2i, \quad  \dom(F'_I(z)) =  \dom(F(z))\cap \dom(F(z)^*),
$$
which is  only nonnegative symmetric  not necessarily essentially
self-adjoint.
   \begin{remark}
(i) Note that in accordance with the second representation theorem (see \cite[Theorem
6.2.23]{Kato})) and Definition \ref{def_imaginary_part}  equalities
\eqref{4.6}-\eqref{4.6A} can be rewritten as
  \begin{equation}\label{4.11}
\cH_+(z) = \dom(F_I(z)^{1/2}) = \dom(F_I(z_0)^{1/2}) = \cH_+(z_0) = \cD[F]\quad
\text{for each}\quad z\in \C_+.
  \end{equation}
Here the spaces $\cH_+(z)$ and $\dom(F_I(z)^{1/2})$ (equipped with the graph norm)
coincide algebraically and topologically.

(ii) Proposition  \ref{prop_form_stability}  shows  that  the family $F(\cdot)$ is   a
holomorphic family in $\C_+$ of the type $(B)$ in the sense of T. Kato \cite[Section
7.4.2]{Kato}
     \end{remark}
    \begin{proposition}\label{prop6.2}
Let $F(\cdot)\in R(\cH)$ and let the conditions of Proposition \ref{prop_form_stability}
be satisfied.
Assume also that  $F_I(z_0)\in \cB(\cH)$ for $z_0\in\mathbb C_+$.
Then:

\item[\;\;\rm (i)]  $F_I(\cdot)$ takes values in $\cB(\cH);$

\item[\;\;\rm (ii)]   The function  $F(\cdot)$ admits a representation
     \begin{equation}\label{6.9A}
F(z) =  G(z) + T \quad z\in \C_+, 
     \end{equation}
where $G(\cdot)\in R[\cH]$ and   $T=T^*\in \cC(\cH)$.
       \end{proposition}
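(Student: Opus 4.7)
The plan is to prove (i) via Harnack's inequality applied to the nonnegative harmonic functions $\mathfrak t(z)[u]$, and (ii) by constructing $T$ and $G$ explicitly from $F(z_0)$ and the Nevanlinna kernel.

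For (i), fix $u\in\mathcal D(F)$ and regard $z\mapsto \mathfrak t(z)[u]=\IM(F(z)u,u)$ as a nonnegative harmonic function on $\C_+$ (Proposition~\ref{prop_form_stability}). The hypothesis $F_I(z_0)\in\cB(\cH)$ gives $\mathfrak t(z_0)[u]\le\|F_I(z_0)\|\,\|u\|^2$ on $\mathcal D(F)$; Harnack's inequality \eqref{6.0}, whose constants $c_1(z,z_0),c_2(z,z_0)$ do not depend on $u$, then propagates this bound to every $z\in\C_+$. Density of $\mathcal D(F)$ in $\cH$ together with closability extends $\mathfrak t(z)$ to a bounded form $\overline{\mathfrak t}(z)$ on $\cH$, so that $F_I(z)\in\cB(\cH)$ by the first representation theorem.

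For (ii), the pivotal preliminary step is to show $\dom F(z_0)=\dom F(z_0)^*$. Polarizing the identity $\mathfrak t(z_0)[u]=(F_I(z_0)u,u)$ on $\dom F(z_0)$ yields $(F(z_0)u,v)-(u,F(z_0)v)=2i(F_I(z_0)u,v)$; interchanging $u$ and $v$ shows $u\in\dom F(z_0)^*$ with $F(z_0)^*u=F(z_0)u-2iF_I(z_0)u$. For the reverse inclusion I would verify that the closed accumulative operator $B:=F(z_0)-2iF_I(z_0)$ on $\dom F(z_0)$, which is contained in $F(z_0)^*$, is already maximal accumulative. This reduces to bijectivity of $B-i$: the estimate $\|(B-i)u\|\ge\|u\|$, obtained from $-\IM((B-i)u,u)=((I+2F_I(z_0))u,u)\ge\|u\|^2$, gives injectivity and closed range, while triviality of $\ker(B-i)^*$ follows from the symmetric form-analog for $F(z_0)^*$ together with $F_I(z_0)\ge 0$. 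With $\dom F(z_0)=\dom F(z_0)^*$ in hand, the operator $T:=F(z_0)-iF_I(z_0)$ is symmetric by the polarized identity, and the direct calculation $T^*=F(z_0)^*+iF_I(z_0)=(F(z_0)-2iF_I(z_0))+iF_I(z_0)=T$ yields self-adjointness.

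Next I would define $G(z)u:=F(z)u-Tu$ for $u\in\mathcal D(F)$ and apply the Nevanlinna kernel identity $F(z)-F(z_0)^*=(z-\bar z_0)\mathbf N_F(z,z_0)$ to recast this as
\[
 G(z)u=(z-\bar z_0)\,\mathbf N_F(z,z_0)u-iF_I(z_0)u.
\]
The Cauchy--Schwarz bound $|(\mathbf N_F(z,z_0)u,v)|^2\le(\mathbf N_F(z,z)u,u)\,(\mathbf N_F(z_0,z_0)v,v)$, combined with $\mathbf N_F(z,z)=F_I(z)/\IM z\in\cB(\cH)$ from (i), shows $\mathbf N_F(z,z_0)\in\cB(\cH)$, so $G(z)$ extends to $\cB(\cH)$. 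Membership $G\in R[\cH]$ then follows from norm-holomorphy of $\mathbf N_F(\cdot,z_0)$, the hermitian symmetry $G(\bar z)=G(z)^*$ (from $F(\bar z)=F(z)^*$ and $T=T^*$), and $\IM G(z)\ge 0$ (from $\IM F(z)\ge 0$ and symmetry of $T$ on $\mathcal D(F)$).

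The main obstacle is the operator identity $F(z)=T+G(z)$ for general $z\in\C_+$. It is immediate at $z_0$, since $T+G(z_0)=T+iF_I(z_0)=F(z_0)$ on $\dom T=\dom F(z_0)$. For $z\neq z_0$ the operator $T+G(z)$ is closed on $\dom T$ and, as $T=T^*$ with $\IM G(z)\ge 0$, maximal dissipative: the lower bound $\|(T+G(z)+i)u\|\ge\|u\|$ together with triviality of $\ker((T+G(z))^*-i)$ yields bijectivity of $T+G(z)+i$. Since $F(z)$ is also maximal dissipative and coincides with $T+G(z)$ on the dense subspace $\mathcal D(F)$, I would close the argument by comparing the $\cB(\cH)$-valued holomorphic resolvents $(F(z)+i)^{-1}$ and $(T+G(z)+i)^{-1}$: they agree at $z_0$, and for each $u\in\mathcal D(F)$ both send $(F(z)+i)u$ to $u$; a density and analytic-continuation argument on their difference then forces global equality and hence the operator identity $F(z)=T+G(z)$ on $\C_+$.
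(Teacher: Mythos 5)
Your part (i) is sound and is essentially the paper's own argument: Harnack's inequality \eqref{6.0} applied to the nonnegative harmonic functions $\IM(F(\cdot)u,u)$, $u\in\mathcal D(F)$, bounds the form $\mathfrak t(z)$ on the form core $\mathcal D(F)$, and the bounded closure is represented by a bounded operator. For (ii) your route differs from the paper's (you take $T:=F(z_0)-iF_I(z_0)$ explicitly and build $G$ from the kernel ${\mathbf N}_F(\cdot,z_0)$, whereas the paper writes the bounded nonnegative harmonic function given by the closure of $F_I(\cdot)$ as a Poisson-type integral, defines $G$ by the corresponding Nevanlinna integral \eqref{6.14}, and shows that $F(\cdot)-G(\cdot)$ is a constant selfadjoint operator), but the route is not the issue: both arguments hinge on the same critical point, the selfadjointness of $T$, equivalently $\dom F(z_0)^*=\dom F(z_0)$, and this is exactly where your proposal breaks down.

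You reduce that point to surjectivity of $B-i$ with $B:=F(z_0)-2iF_I(z_0)\subset F(z_0)^*$, and you assert that $\ker (B-i)^*=\{0\}$ ``follows from the symmetric form-analog for $F(z_0)^*$ together with $F_I(z_0)\ge 0$''. No such analog is available: the hypotheses of Proposition \ref{prop_form_stability} and the bound $F_I(z_0)\in\cB(\cH)$ control $\IM(F(z_0)u,u)$ only for $u\in\dom F(z_0)$ and say nothing about $\IM(F(z_0)^*v,v)$ on $\dom F(z_0)^*\setminus\dom F(z_0)$; indeed, for $v\in\ker(B^*+i)$ one only obtains $\IM(F(z_0)^*v,v)=-\|v\|^2-2(F_I(z_0)v,v)$, which is perfectly compatible with $F(z_0)^*$ being maximal accumulative, so no contradiction appears. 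The step cannot be repaired from the data you use: let $S$ be closed, symmetric, maximal dissipative but not selfadjoint (for instance $S=i\,d/dt$ on $\{u\in W^1_2(\R_+):\,u(0)=0\}$) and set $F(z)=S$ for $z\in\C_+$, $F(z)=S^*$ for $z\in\C_-$. Then $F\in R(\cH)$ is strongly holomorphic, $\mathfrak t(z)\equiv 0$ is closable, $\mathcal D(F)=\dom S$ is a form core and $F_I(z_0)=0\in\cB(\cH)$, yet $B=S$ is not maximal accumulative, $\ker(B-i)^*=\ker(S^*+i)\neq\{0\}$, and your $T=F(z_0)-iF_I(z_0)=S$ is merely symmetric, not selfadjoint. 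The paper's proof settles this point only by appealing to the mirrored inclusion at $\bar z_0$, i.e.\ to the adjoint family $F(\bar z)=F(z)^*$ in the lower half-plane; some such two-sided input is indispensable, and your purely upper-half-plane argument never invokes it. A secondary gap is your closing step: two maximal dissipative operators agreeing on the dense set $\mathcal D(F)$ need not coincide, and your resolvent comparison identifies $(F(z)+i)^{-1}$ with $(T+G(z)+i)^{-1}$ only on $(F(z)+i)\mathcal D(F)$, which is dense precisely when $\mathcal D(F)$ is an \emph{operator} core for $F(z)$ --- a condition not contained in the hypotheses, which make $\mathcal D(F)$ only a form core --- while agreement of the two resolvents at the single point $z_0$ cannot be propagated by analytic continuation.
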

   \begin{proof}
(i)  Since $F\in R(\cH)$ is a strongly holomorphic function, the family
$$
\mathcal F=\{\IM\bigl(F(\cdot)u,u\bigr): u\in\mathcal D(F)\}
$$
is well defined and constitutes the family of  nonnegative harmonic
functions, $\mathcal F\subset Har_+(\mathbb C_+)$. Fix $z\in\mathbb
C_+$. Then, by Proposition \ref{prop_form_stability}(ii)  the form $
t(z)[\cdot]$ is closable  and by  the Harnack's inequality
\eqref{6.0},
     \begin{equation*}
     0\le \mathfrak t(z)[u] := \IM\bigl(F(z)u, u\bigr) \le  c_2\IM\bigl(F(z_0)u, u\bigr)
\le c_2\|F_I(z_0)\|\cdot\|u\|^2, \quad u\in \mathcal D(F).
     \end{equation*}
It follows that the form $\mathfrak t(z)$ is bounded on $\mathcal
D(F)$. Since $\mathcal D(F)$ is a core for $\mathfrak t(\cdot)$, the
form $\mathfrak t(z)$ admits a bounded continuation on $\cH$ and by
the Riesz representation theorem,
     \begin{equation}\label{6.11A}
\mathfrak t(z)[u, v] = \bigl(T(z)u, v\bigr)_{\cH}, \qquad 0\le T(z)
= T^*(z) \in \cB(\cH), \quad u,v\in \cB(\cH).
  \end{equation}
Using the polarization identity we obtain from \eqref{6.1}  that
\[
\mathfrak t(z)[u, v]= (2i)^{-1}\left(\bigl(F(z)u, v\bigr) - \bigl(u,
F(z)v\bigr)\right), \qquad u, v \in \dom F(z).
\]
Combining this identity with \eqref{6.11A}  we derive
\[
\bigl((F(z)-i T(z))u, v\bigr) = \bigl(u, (F(z) - iT(z))v\bigr),\qquad  u,v \in \dom F(z).
\]
Since $\dom F(z)$ is dense in $\cH$, it follows that $v\in
\dom\bigl(F(z)^* + iT(z)\bigr)$, i.e.
\[
\dom F(z)\subset
\dom\bigl(F(z)^* + iT(z)\bigr).
\]
On the other hand,
since $T(z) = T^*(z)$ is
bounded, then
\[
\dom\bigl(F(z)^* + iT(z)\bigr)= \dom F(z)^*\quad\mbox{ and }\quad
\dom
F(z)\subset \dom F(z)^*.
\]
By symmetry,
\[
\dom F(z)^* = \dom F(\overline z) \subset \dom F^*(\overline z) = \dom F(z)
\]
Thus, $\dom F(z)^* = \dom F(z)$ and the imaginary part $F_I(\cdot) := (2i)^{-1}\bigl(F(\cdot)-F^*(\cdot)\bigr)$ \\
of $F(\cdot)$  is well defined and
   \begin{equation}\label{6.12}
F_I(z)u = T(z)u, \quad   u\in\dom F(z) (\supset \mathcal D(F)).
   \end{equation}
Hence $F_I(z)$ is bounded and its closure coincides with $T(z)$.

(ii)  Being a nonnegative harmonic $\cB(\cH)$-valued function in
$\mathbb C_+$, $T(\cdot)$ admits a representation
   \begin{equation}\label{Int_rep-n}
T(z)= B_0 + B_1 y + \int_{\mathbb R}\frac{y}{(x-t)^2+y^2}d\Sigma(t),
   \end{equation}
where $B_j = B_j^*\in\cB(\cH)$, $j\in \{0,1\}$, $B_1\ge 0$, and   $\Sigma(\cdot)$ is the $\cB(\cH)$-valued operator measure
satisfying
     \begin{equation}\label{Measuer_Condition}
K_\Sigma := \int_{\mathbb R}(1+t^2)^{-1}d\Sigma(t)\in\cB(\cH).
     \end{equation}\label{6.13}
Define $R[\cH]$-function $G(\cdot)$ by setting
    \begin{equation}\label{6.14}
G(z) = B_0 + B_1 z   + \int_{\mathbb
R}\left(\frac{1}{t-z}-\frac{1}{1+t^2}\right)\,d\Sigma(t).
    \end{equation}
Further we let
 \begin{equation}\label{6.18}
G_1(z) := F(z) - G(z)
\end{equation}
and note that $G_1(\cdot)$ is holomorphic in $\mathbb C_+$.
Moreover, it follows from \eqref{6.12}, \eqref{6.13} and
\eqref{6.14} that
\[
\IM \bigl(G_1(z)u, u\bigr) = 0, \quad     z\in\mathbb C_+, \quad
u\in\mathcal D(F).
\]
Hence  the operator $G_1(z)$ is symmetric for any $z\in\mathbb C_+$.
Let us show that $G_1(z)$ is self-adjoint. Since $F(z)$ is
$m$-dissipative for  $z\in\mathbb C_+$ and $G(\cdot)$ takes values
in $\cB(\cH),$ one has $\rho\bigl(G_1(z)\bigr)\cap\mathbb C_- \not =
\emptyset$.

Further, since $F(z)^*$ is $m$-accumulative for $z\in\mathbb C_+$
and $G(z)^*\in\cB(\cH)$, we get
\[
G_1(z)^*=F(z)^*-G(z)^* \qquad  \text{and}\qquad
\rho\bigl(G_1(z)^*\bigr)\cap\mathbb C_+\not = \emptyset.
\]
Thus, $G_1(z) = G_1(z)^*$  for any $z\in\mathbb C_+$. Being
holomorphic in $\mathbb C_+$, the operator-valued function
$G_1(\cdot)$ is constant, $G_1(z)=T=T^*,\ z\in\mathbb C_+$.
Combining this with \eqref{6.18} leads to \eqref{6.9A}.
  \end{proof}
Next we investigate the invariance property of real continuous spectrum.

Recall that $\lambda_0\in \sigma_c(T)$ if $\lambda_0\not \in \sigma_p(T)$ and  there
exists a non-compact (quasi-eigen) sequence $f_n\in \dom(T)\subset \cH$ such that
$$
\lim_{n\to\infty} \|(T- \lambda_0)f_n\| =0.
$$
  \begin{proposition}\label{prop_Harmonic_cont_and_point_spec}
Let $F\in R(\cH)$  and satisfy  the conditions of Proposition \ref{prop_form_stability}.
Let also $F_I(\cdot)$ be  its imaginary part in the sense of  Definition
\ref{def_imaginary_part}. Then the following holds:

\item[\;\;\rm (i)]
If  $a = \overline{a}\in\sigma_c\bigl(F_I(z_0)\bigr)$ for some
$z_0\in\mathbb C_+$,  then
\[
a\in\sigma_c\bigl(F_I(z)\bigr)\qquad   \text{for}\qquad  z\in\mathbb
C_+;
\]

\item[\;\;\rm (ii)]
If  $a = \overline{a}\in\sigma_p\bigl(F_I(z_0)\bigr)$ for some
$z_0\in\mathbb C_+$, then
\[
a\in\sigma_p\bigl(F_I(z)\bigr)\quad\text{and}\quad
ker\bigl(F_I(z)-a)= \ker\bigl(F_I(z_0)-a)\quad  \text{for}\quad
z\in\mathbb C_+.
\]
       \end{proposition}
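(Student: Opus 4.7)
The plan is to parallel the strategy of the bounded-case argument (Theorem~\ref{prop:Inv}~(1)--(4) and Lemma~\ref{lem:MaxP}): combine Harnack's inequality with a nonnegative $2\times 2$ kernel identity, now working at the level of the sesquilinear form $\overline{\mathfrak{t}}(\cdot)$ rather than on the operators themselves. The form-domain invariance from Proposition~\ref{prop_form_stability}~(ii), together with the harmonicity of $\overline{\mathfrak{t}}(\cdot)[u,v]$ for fixed $u,v\in\cD[F]$ from Proposition~\ref{prop_form_stability}~(iii), are the essential structural inputs.

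For (ii), suppose $F_I(z_0)u_0=au_0$ with $u_0\neq 0$. Since $u_0\in\dom F_I(z_0)\subset\dom F_I(z_0)^{1/2}=\cD[F]$ and $\cD[F]$ is $z$-independent, we immediately have $u_0\in\dom F_I(z)^{1/2}$ for every $z\in\C_+$. By the first representation theorem, verifying $F_I(z)u_0=au_0$ is equivalent to verifying the form identity $\overline{\mathfrak{t}}(z)[u_0,v]=a(u_0,v)$ for all $v\in\cD[F]$. Hence it suffices to show that the function $\phi_v(z):=\overline{\mathfrak{t}}(z)[u_0,v]-a(u_0,v)$, which is harmonic in $\C_+$ by Proposition~\ref{prop_form_stability}~(iii) and vanishes at $z_0$, vanishes identically. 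For this I would adapt the nonnegative $2\times 2$ matrix argument from the proof of Theorem~\ref{prop:Inv}~(4): the Nevanlinna kernel $\mathbf{N}_F(z,w)[u,v]$, initially defined for $u,v\in\cD(F)$ as in~\eqref{eq:Kern_F}, extends by continuity (using harmonicity and the Harnack norm equivalence from Proposition~\ref{prop_form_stability}) to $\cD[F]\times\cD[F]$ and remains nonnegative as a scalar kernel. The realness of $a$ together with $F_I(z_0)u_0=au_0$ forces the diagonal term $(\mathbf{N}_F(z_0,z_0)u_0,u_0)$ to vanish, and nonnegativity of the $2\times 2$ Gram matrix built with test vectors $u_0$ at $z_0$ and an arbitrary $v\in\cD[F]$ at $z$ then forces $\phi_v(z)=0$. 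The identity $\ker(F_I(z)-a)=\ker(F_I(z_0)-a)$ follows by running the argument symmetrically in $z$ and $z_0$.

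For (i), let $a\in\sigma_c(F_I(z_0))$ with a non-compact sequence $f_n\in\dom F_I(z_0)$, $\|f_n\|=1$, $\|(F_I(z_0)-a)f_n\|\to 0$. First, by (ii) applied in both directions, $a\notin\sigma_p(F_I(z))\Leftrightarrow a\notin\sigma_p(F_I(z_0))$; it thus suffices to show $a\in\sigma(F_I(z))$, i.e.\ to produce a singular sequence for $F_I(z)-a$. From $((F_I(z_0)-a)f_n,f_n)\to 0$ we obtain $\overline{\mathfrak{t}}(z_0)[f_n]\to a$, and Harnack's inequality~\eqref{6.0} then gives $\overline{\mathfrak{t}}(z)[f_n]$ bounded for each fixed $z$. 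Hence $\|f_n\|_{+,z}$ is bounded, so $(f_n)$ is a bounded sequence in $\cD[F]=\cH_+(z)$. A spectral truncation of $f_n$ into $\dom F_I(z)$, combined with the same $2\times 2$ kernel estimate used for (ii) applied to $(f_n)$ rather than a fixed eigenvector, should produce the required non-compact approximating sequence $g_n\in\dom F_I(z)$ with $\|(F_I(z)-a)g_n\|\to 0$; the non-compactness of $(g_n)$ is preserved because the Harnack norms are topologically equivalent on $\cD[F]$.

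The hard step will be verifying the form-level Nevanlinna-kernel nonnegativity on the $z$-independent space $\cD[F]$ and, for (i), using it to extract a genuine Weyl sequence in $\dom F_I(z)$ (a strict operator-domain statement) from the form-level approximation. Setting up the correct $2\times 2$ matrix for elements of $\cD[F]\setminus\cD(F)$ is delicate because the natural kernel $\mathbf{N}_F(z,w)$ from~\eqref{eq:Kern_F} involves $(F(z)u,v)$ and $(u,F(w)v)$, which need not be defined outside $\cD(F)$; the passage to the form setting must be justified by approximating $u,v$ by elements of the core $\cD(F)$ and invoking the Harnack-based norm equivalence to close the limits in both the form and the $\cH$-norm simultaneously.
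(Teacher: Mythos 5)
The central step of your argument for (ii) fails. The diagonal entry of your Gram matrix is
$(\mathbf N_F(z_0,z_0)u_0,u_0)=(\IM F(z_0)u_0,u_0)/\IM z_0=\overline{\mathfrak t}(z_0)[u_0]/\IM z_0=a\|u_0\|^2/\IM z_0$,
which vanishes only when $a=0$; the realness of $a$ does not help, because here $a$ is an eigenvalue of the \emph{imaginary part} $F_I(z_0)$, not of $F(z_0)$. You are transplanting the argument of Theorem~\ref{prop:Inv}\,(4), which concerns real eigenvalues of $F$ itself, to a situation where its analogue (Theorem~\ref{prop:Inv}\,(1)) covers precisely the case $a=0$. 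This is not a repairable slip: for $a\neq0$ no invariance mechanism can exist, as $F(z)=zA$ with $0\le A=A^*\in\cB(\cH)$ shows, since then $F_I(z)=(\IM z)A$ and nonzero eigenvalues and nonzero continuous spectrum move with $z$; accordingly the paper's proof treats $a=0$. Moreover, even for $a=0$ your route is not closed: the kernel $\mathbf N_F$ of \eqref{eq:Kern_F} is only defined using $F(z_0)u_0$, so it requires $u_0\in\dom F(z_0)$, whereas your $u_0\in\dom F_I(z_0)$ need not lie there (its positivity in the unbounded case comes through the pair kernel ${\sf N}_{\Phi,\Psi}$ on the graphs); and after extending the off-diagonal entries by continuity to $\cD[F]$ you can no longer split them into the two summands $(F(z)u,u)$ and $(u,F(z_0)u)$ whose separate control the bounded-case argument of Theorem~\ref{prop:Inv}\,(1) uses. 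For (i) the situation is similar: your Harnack step yields only boundedness of $\overline{\mathfrak t}(z)[f_n]$ (which gives nothing), and the decisive transfer of the singular sequence is left as ``a spectral truncation \dots should produce'', i.e.\ is not actually carried out.

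The missing idea, which is essentially the paper's entire proof, is to work with the square root $F_I(z)^{1/2}$: by Proposition~\ref{prop_form_stability} and the second representation theorem, $\dom F_I(z)^{1/2}=\cH_+(z)=\cD[F]$ is independent of $z$ and $\|F_I(z)^{1/2}u\|^2=\overline{\mathfrak t}(z)[u]$, which is a nonnegative harmonic function of $z$ for each $u\in\cD[F]$. Hence if $F_I(z_0)u_0=0$, then $\overline{\mathfrak t}(z_0)[u_0]=0$ and Harnack's inequality (the closed-form version of \eqref{6.4}) gives $0\le\overline{\mathfrak t}(z)[u_0]\le c_2\,\overline{\mathfrak t}(z_0)[u_0]=0$, so $F_I(z)^{1/2}u_0=0$ and thus $F_I(z)u_0=0$; no kernel positivity or harmonic-identity argument is needed, and the kernel equality follows by symmetry in $z,z_0$. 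For (i), with $a=0$ and a non-compact sequence $\|v_k\|=1$, $\|F_I(z_0)v_k\|\to0$, one gets $\overline{\mathfrak t}(z_0)[v_k]\to0$, hence $\|F_I(z)^{1/2}v_k\|^2=\overline{\mathfrak t}(z)[v_k]\le c_2\,\overline{\mathfrak t}(z_0)[v_k]\to0$; since $v_k\in\cD[F]=\dom F_I(z)^{1/2}$ automatically, the same sequence is a singular sequence for $F_I(z)^{1/2}$, whence $0\in\sigma_c\bigl(F_I(z)^{1/2}\bigr)$ and therefore $0\in\sigma_c\bigl(F_I(z)\bigr)$ --- no truncation into $\dom F_I(z)$ is required. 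In short, your structural inputs (form-domain invariance and Harnack) are the right ones, but the kernel argument you build on them breaks down for $a\neq0$ and is incomplete even for $a=0$, while the actual proof is a direct Harnack estimate at the level of $F_I(z)^{1/2}$.
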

   \begin{proof}
(i)  Without loss of generality we can assume that $a=0.$ Since
$0\in\sigma_c\bigl(F_I(z_0)\bigr)$, there exists an non-compact
(quasi-eigen) sequence $\{v_k\}_{k\in \mathbb N}\in \dom(F_I(z_0))$
such that
       \begin{equation}\label{4.25}
\|v_k\|=1\quad \mbox{ and }\quad \lim_{k\to\infty}\|F_I(z_0)v_k\|=0.
     \end{equation}
By Proposition \eqref{prop_form_stability}(ii),  $\{v_k\}_{k\in \N}\in
\dom(F_I(z_0))\subset \cD[F]$. Using Definition \ref{def_imaginary_part} and relation
\eqref{4.11}  one rewrites the right-hand side of inequality \eqref{6.4} as
    \begin{equation}\label{6.4New}
0  \le  \|F_I(z)^{1/2}v_k\|^2 = \mathfrak t(z)[v_k] \le c_2
\mathfrak t(z_0)[v_k]= \|F_I(z_0)^{1/2}v_k\|^2, \quad v_k\in\mathcal
D[F]= \cH_+(z).
    \end{equation}
Combining \eqref{4.25} with  \eqref{6.4New} and noting that the sequence  $\{v_k\}$ is
not compact, one gets that $0\in\sigma_c\bigl(F_I(z)^{1/2}\bigr)$. Hence
$0\in\sigma_c\bigl(F_I(z)\bigr)$.

(ii)  Let  $a=0 \in\sigma_p\bigl(F_I(z_0)\bigr)$ and $u\in \ker\bigl(F_I(z_0)\bigr)$.
Hence $u\in \ker\bigl(F_I(z_0)^{1/2} \bigr)$.  By Proposition~\eqref{prop_form_stability}~(ii),  $u\in \dom(F_I(z))\subset \cD[F] =
\dom(F_I(z_0)^{1/2})$ for each $z\in \C_+$. It follows from \eqref{6.4New} with $u$ in
place of $v_k$ that $F_I(z)^{1/2}u = 0$. Hence $F_I(z)u = 0$ for each $z\in \C_+$.
 \end{proof}

Next we slightly improved Proposition \ref{prop_Harmonic_cont_and_point_spec}(i).
  \begin{proposition}\label{prop_Harmonic_cont_spec}
Let $F\in R(\cH)$  and satisfy  the conditions of Proposition
\ref{prop_form_stability}. Let also $F_I(\cdot)$ be  its imaginary
part in the sense of  Definition \ref{def_imaginary_part}. If  $a =
\overline{a}\in\sigma_c\bigl(F_I(z_0)\bigr)$ for some $z_0\in\mathbb
C_+$,  then
  \begin{equation*}
a\in\sigma_c\bigl(F_I(z)\bigr)\qquad   \text{for}\qquad  z\in\mathbb
C_+.
  \end{equation*}
Moreover, a quasi-eigen sequence can be chosen to be common for all
$F_I(z),\ z\in \mathbb C_+.$
        \end{proposition}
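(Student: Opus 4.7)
The plan is to observe that, with a careful re-reading of the proof of Proposition~\ref{prop_Harmonic_cont_and_point_spec}(i), the same quasi-eigen sequence constructed there already serves simultaneously for every $z\in\mathbb C_+$; if one wants the sharper operator-norm version, one refines the construction by picking vectors in low-lying spectral subspaces of $F_I(z_0)$.

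First I would reduce to $a=0$ by replacing $F(\cdot)$ with $F(\cdot)-a$. Since $0\in\sigma_c(F_I(z_0))$ one has $\ker F_I(z_0)=\{0\}$ (by Proposition~\ref{prop_Harmonic_cont_and_point_spec}(ii)) and the spectrum of the self-adjoint operator $F_I(z_0)$ accumulates at $0$. Let $E_{z_0}(\cdot)$ denote its spectral measure, pick $\epsilon_k\downarrow 0$ with $E_{z_0}((0,\epsilon_k])\neq 0$, and choose an orthonormal system $v_k\in \ran E_{z_0}((0,\epsilon_k])$. Then $\|v_k\|=1$, $\{v_k\}$ is non-compact, $v_k\in\bigcap_n\dom F_I(z_0)^n$, and $\|F_I(z_0)v_k\|\le\epsilon_k\to 0$. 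By Proposition~\ref{prop_form_stability}(ii), $v_k\in\cD[F]=\dom F_I(z)^{1/2}$ for every $z\in\mathbb C_+$.

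The next step is to invoke the form Harnack inequality \eqref{6.4New}. The harmonic function $z\mapsto(F_I(z)v_k,v_k)$ is nonnegative on $\mathbb C_+$, so Harnack yields a constant $c_2(z,z_0)$, depending only on $z$ and $z_0$ and independent of $v_k$, such that
\[
\|F_I(z)^{1/2}v_k\|^2=\mathfrak t(z)[v_k]\le c_2(z,z_0)\,\mathfrak t(z_0)[v_k]\le c_2(z,z_0)\|F_I(z_0)v_k\|\to 0.
\]
This already shows that $\{v_k\}$ is, uniformly in $z$, a non-compact unit-norm sequence with $\|F_I(z)^{1/2}v_k\|\to 0$, witnessing $0\in\sigma_c(F_I(z)^{1/2})$, hence $0\in\sigma_c(F_I(z))$, simultaneously for all $z\in\mathbb C_+$. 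The final step is to upgrade this form-level convergence to the operator-norm convergence $\|F_I(z)v_k\|\to 0$ demanded by the definition of a quasi-eigen sequence. The bounded spectral support of $v_k$ in $\ran E_{z_0}((0,\epsilon_k])$, combined with the holomorphic-family-of-type-$(B)$ structure secured by Proposition~\ref{prop_form_stability}, gives $v_k\in\dom F_I(z)$ together with a bound $\|F_I(z)\uphar\ran E_{z_0}((0,\epsilon_k])\|\le M(z,z_0)$, so that
\[
\|F_I(z)v_k\|^2\le M(z,z_0)\cdot(F_I(z)v_k,v_k)\to 0.
\]

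The main obstacle is precisely this last upgrade. Form Harnack only controls $F_I(z)^{1/2}$ on $\cD[F]$ and does not by itself pass from $\|F_I(z)^{1/2}v_k\|\to 0$ to $\|F_I(z)v_k\|\to 0$ when $F_I(z)$ is unbounded: a sequence can have $\|T^{1/2}f_n\|\to 0$ while $\|Tf_n\|\not\to 0$. The bounded spectral localization built into the refined construction $v_k\in\ran E_{z_0}((0,\epsilon_k])$ is exactly what closes this gap, and the comparison of spectral subspaces of $F_I(z_0)$ and $F_I(z)$ rests on the type-$(B)$ holomorphy of the family $F_I(\cdot)$; verifying these two ingredients is the technical heart of the proof.
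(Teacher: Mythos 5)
Your construction of the sequence and the use of the form Harnack inequality reproduce what the paper already has in Proposition \ref{prop_Harmonic_cont_and_point_spec}(i); the genuinely new content of the present proposition is exactly the step you flag at the end, and that step has a real gap. From $\|F_I(z)^{1/2}v_k\|\to 0$ you want $\|F_I(z)v_k\|\to 0$, and you claim that the spectral localization $v_k\in\ran E_{z_0}((0,\epsilon_k])$ together with the type-$(B)$ holomorphy of the family yields $v_k\in\dom F_I(z)$ and a uniform bound $\|F_I(z)\uphar\ran E_{z_0}((0,\epsilon_k])\|\le M(z,z_0)$. Neither assertion follows. The localization is with respect to the spectral measure of $F_I(z_0)$, while the bound you need concerns $F_I(z)$; Harnack (equivalently, type-$(B)$ holomorphy) only gives two-sided comparability of the \emph{forms} $\mathfrak t(z)$ and $\mathfrak t(z_0)$, i.e. equality of the form domains $\dom F_I(z)^{1/2}=\dom F_I(z_0)^{1/2}$ with equivalent norms. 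Comparability of forms does not imply any inclusion between the operator domains: two nonnegative selfadjoint operators with mutually comparable forms can have operator domains intersecting trivially (this is precisely the phenomenon exhibited in Example \ref{Ex4A}, where $\ran A^{1/2}=\ran B^{1/2}$ while $\ran A\cap\ran B=\{0\}$). So there is no reason that $\ran E_{z_0}((0,\epsilon_k])\subset\dom F_I(z)$, let alone that $F_I(z)$ is bounded there with a $k$-independent constant, and the final display $\|F_I(z)v_k\|^2\le M(z,z_0)\,(F_I(z)v_k,v_k)$ is unsupported.

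The paper closes this gap by a different device: it does not try to control $F_I(z)$ on spectral subspaces of $F_I(z_0)$, but instead reduces to the bounded case by compression. One picks an orthonormal quasi-eigen sequence with $\sum_k\|F_I(z_0)v_k\|^2<\infty$, approximates it quadratically closely by vectors $u_k\in\mathcal D(F)$ (possible since $\mathcal D(F)$ is a core), notes that $\{u_k\}$ is a Riesz basis of $\cH_0=\cspan\{u_k\}$ by the Gohberg--Kre\u{\i}n theorem, and shows that the compressed function $F_0=P_0F(\cdot)\uphar\cH_0$ has bounded imaginary part at $z_0$; Proposition \ref{prop6.2} then gives boundedness of $F_{I,0}(z)$ for \emph{all} $z\in\C_+$. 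Only after this reduction does Harnack's inequality, applied to $h_k(z)=(F_{I,0}(z)u_k,u_k)$, combine with boundedness to upgrade form convergence to the operator-norm convergence $\|F_{I,0}(z)u_k\|\to 0$, producing the common quasi-eigen sequence. Some such boundedness mechanism is indispensable, and your argument does not supply one.
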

   \begin{proof}
(i) First we reduce the proof to the case of $R$-function with bounded imaginary part.

In fact, this sequence  $\{v_k\}_{k\in \mathbb N}\in \dom(F_I(z_0))$
in \eqref{4.25} can be chosen to be orthonormal. Passing if
necessary to a subsequence $\{v_{n_k}\}_{k\in\mathbb N},$ we can
assume that
\[
\sum^{\infty}_{k=1}\|F_I(z_0)v_k\|^2 := \alpha_{F}(z_0)<\infty.
\]
Since $\mathcal D(F)$ is a core for the form $\mathfrak t(z_0)$, it
is dense  in $\dom(F_I(z_0))(\subset \cH)$ equipped with the graph's
norm. So, there exists a sequence $\{u_k\}^{\infty}_1\subset\mathcal
D(F)$ such that
   \begin{equation}\label{4.26}
\sum^{\infty}_{k=1}\|u_k - v_k\|^2<\infty \quad \text{and}\quad
\sum^{\infty}_{k=1}\|F_I(z_0)(u_k - v_k)\|^2 =: \beta_F(z_0)<\infty.
   \end{equation}
Let  $\cH_0$  be  a subspace spanned by the sequence $\{u_k\}_1^\infty$,   $\cH_0:=
\span\{u_k\}^{\infty}_1$. It is known (see \cite[Theorem 6.2.3]{GK65}) that the system
$\{u_k\}^{\infty}_1$  forms (after possible replacement  of a finite number of vectors
by another system of linearly independent vectors) a Riesz basis
in $\cH_0$. 
Assume for convenience  that such replacement is not needed, i.e. the system
$\{u_k\}_1^\infty$ itself  constitutes  the Riesz basis  in $\cH_0$. Denote by $P_0$ the
orthoprojection in $\cH$ onto  $\cH_0$  and put
     \begin{equation*}
F_{0}(\cdot) := P_0 F(\cdot)\lceil\cH_0 \quad \text{and}\quad
F_{I,0}(z) := P_0 F_{0,I}(z)\lceil\cH_0, \quad z\in\mathbb C_+.
  \end{equation*}
First we show that $F_{I,0}(\cdot)\in R[\cH_0]$. Indeed, since the
system $\{u_k\}_1^\infty$ forms the Riesz basis  in $\cH_0$, any
$u\in \cH_0$ admits a decomposition  $u=\sum_k c_k u_k$  with  $c:=
\{c_k\}_1^\infty \in l^2(\mathbb N)$. Clearly,
   \begin{eqnarray*}
\|F_{I,0}(z_0)\sum^n_{k=1}c_k u_k \|^2 &\le & \left(\sum^n_{k=1}|c_k|\cdot\|F_{I,0}(z_0)u_k \|\right)^2 \nonumber   \\
&\le &(\sum^n_1|c_k|^2)\bigl(\sum^n_1\|P_0F_I(z_0)u_k\|^2\bigr)\nonumber   \\
&\le & 2(\alpha_{F}(z_0) + \beta_{F}(z_0)))\cdot\|c\|_{l^2}^2, \quad
n\in \mathbb N.
  \end{eqnarray*}
Hence $F_{I,0}(z_0) \in [\cH_0]$. Moreover, it is easily seen that
$F_{0}(\cdot)$ satisfies the conditions of
Proposition~\ref{prop_form_stability} together with $F(\cdot)$.
Thus, by Proposition  \ref{prop6.2}, $F_{I,0}(\cdot)$ takes values
in $\cB(\cH_0)$.

(ii)   It follows from \eqref{4.25} and \eqref{4.26} that the sequence $\{u_k\}_{k\in
\N}$ is a quasi-eigen  sequence for the operator $F_{I,0}(z_0)$ corresponding to the
point $a=0,$ i.e. it is bounded, non-compact and
   \begin{equation}\label{4.27}
\lim_{k\to\infty}\|F_{I,0}(z_0)u_k\|=0.
  \end{equation}
Define  a family of scalar nonnegative harmonic  functions
$h_k(\cdot) := \bigl(F_{I,0}(\cdot)u_k,u_k\bigr)$ in $\mathbb C_+$.
Since the sequence $\{u_k\}_{k\in \N}$ is bounded, relation
\eqref{4.27} yields
\[
\lim_{k\to\infty}h_k(z_0) = \lim_{k\to\infty} \bigl(F_{I,0}(z_0)u_k, u_k\bigr) = 0.
\]
By the Harnack inequality \eqref{6.0}, this relation implies similar
relation  for any $z\in\mathbb C_+$ (cf.~\eqref{6.4}),
\[
\lim_{k\to\infty}h_k(z) = \lim_{k\to\infty} \bigl(F_{I,0}(z)u_k,
u_k\bigr) = \lim_{k\to\infty} \|F_{I,0}^{1/2}(z)u_k\|^2 = 0, \qquad
z\in\mathbb C_+.
\]
Since $F_{I,0}(\cdot)$ takes values in $\cB(\cH_0)$, the latter implies
$\lim_{k\to\infty} \|F_{I,0}(z)u_k\| =0$ which proves the result.
    \end{proof}
%
%
  \begin{corollary}\label{cor6.4}
Let $F(\cdot)\in R(\cH)$ and $F(z_0)\in \cB(\cH)$ for some
$z_0\in\mathbb C_+$. Then $F(\cdot)\in R[\cH]$.
  \end{corollary}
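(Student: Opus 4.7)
The plan is to obtain this as a direct application of Theorem~\ref{invar}(vii). The key observation is that every $F\in R(\cH)$ belongs to the wider class $\wt R(\cH)$ of Nevanlinna families via the graph identification $\cF(z):=\graph F(z)$, so that Theorem~\ref{invar} is directly applicable. The hypothesis $F(z_0)\in \cB(\cH)$ asserts that $F(z_0)$ is everywhere defined and bounded on $\cH$; equivalently, $\cF(z_0)$ is the graph of an operator in $\cB(\cH)$, which is the precise meaning of $\cF(z_0)\in\cB(\cH)$ appearing in Theorem~\ref{invar}(vii).

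Applying that theorem yields $\cF(z)\in \cB(\cH)$ for every $z\in\cmr$, i.e.\ $\dom F(z)=\cH$ and $F(z)\in \cB(\cH)$ for all $z\in\cmr$. The symmetry $F(\bar z)=F(z)^*$ built into condition (NF2) is automatic and handles the lower half-plane without additional argument. By the definition of $R[\cH]$ as the subclass of $R(\cH)$ consisting of operator-valued functions with full domain throughout $\cmr$, this gives $F\in R[\cH]$.

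Thus no further work is required beyond unwinding notation: the substance of the corollary is already encapsulated in Theorem~\ref{invar}(vii). Recall that the latter rests on Lemma~\ref{lem:F_C}(vii), which translates $\cF(z)\in\cB(\cH)$ into the condition $1\in\rho(\cC(z))$ for the Cayley transform $\cC(z)=(\Psi(z)-i\Phi(z))(\Psi(z)+i\Phi(z))^{-1}$, combined with Lemma~\ref{lem:MaxP}(5), which propagates $1\in\rho(\cC(z))$ from $z_0$ to all of $\C_+$ by applying Harnack's inequality to the nonnegative harmonic function $\RE\bigl(((1-\cC(z))u,u)\bigr)$. Consequently the only genuine analytic ingredient, Harnack's inequality, is already invested in Theorem~\ref{invar}, and there is no real obstacle; the corollary is a transparent rephrasing of that theorem in the operator-valued rather than relation-valued setting.
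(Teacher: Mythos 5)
Your proof is correct, but it follows a genuinely different route from the paper's. The paper deduces Corollary~\ref{cor6.4} from Proposition~\ref{prop6.2}: it writes $F(\cdot)=F_1(\cdot)+T$ with $F_1(\cdot)\in R[\cH]$ and $T=T^*$, and then observes that $T=F(z_0)-F_1(z_0)\in\cB(\cH)$, so $F\in R[\cH]$; this argument runs through the form-theoretic machinery of Section~4 (strong holomorphy, closability of $\mathfrak t(z_0)$, the core condition of Proposition~\ref{prop_form_stability}, and the integral representation of the harmonic function $T(\cdot)$). You instead invoke Theorem~\ref{invar}(vii) for the Nevanlinna family $\cF(z)=\graph F(z)$, whose proof rests on Lemma~\ref{lem:F_C}(vii) (the translation $\cF(z)\in\cB(\cH)\Leftrightarrow 1\in\rho(\cC(z))$ for the Cayley transform) together with Lemma~\ref{lem:MaxP}(5) (propagation of $1\in\rho(\cC(z))$ by Harnack's inequality), and then you unwind the definition $R[\cH]=\{F\in R(\cH):\dom F(z)=\cH \text{ for all } z\in\cmr\}$. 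What your route buys is that it needs no hypotheses beyond those actually stated in the corollary: Theorem~\ref{invar} applies to arbitrary Nevanlinna families, whereas Proposition~\ref{prop6.2} formally presupposes the conditions of Proposition~\ref{prop_form_stability} (in particular density of $\mathcal D(F)$ and the core/closability assumptions), which the corollary does not list; in this sense your argument is cleaner and more self-contained, and it also delivers the conclusion on all of $\cmr$ at once, the lower half-plane being covered either by the theorem itself or by the symmetry $F(\bar z)=F(z)^*$. What the paper's route buys is the finer structural statement $F=G+T$ with $G\in R[\cH]$ and $T=T^*$, which is of independent interest in Section~4. Both arguments ultimately rest on the same analytic ingredient, Harnack's inequality.
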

  \begin{proof}
By Proposition \ref{prop6.2},  $F(\cdot)= F_1(\cdot) + T$ where $F_1(\cdot)\in R[\cH]$ and $T=T^*$. Since
\[
F(z_0)=F_1(z_0) + T\in\cB(\cH)\quad\mbox{ and }\quad
F_1(z_0)\in\cB(\cH),
\]
the operator
$T$ is bounded and $F\in R[\cH]$.
 \end{proof}

Next we present another proof of statement (iv) in Theorem~\ref{invar}.
  \begin{proposition}\label{prop6.5}
Let $F(\cdot)\in R(\cH)$, $a
=\overline{a}\in\sigma_p\bigl(F(z_0)\bigr)$ for $z_0\in\mathbb C_+$.
Then
\[
a\in \sigma_p \bigl(F(z)\bigr)\qquad \text{and}\qquad
\ker\bigl(F(z)-a)= \ker\bigl(F(z_0)-a) \qquad  z\in\mathbb C_+.
\]
\end{proposition}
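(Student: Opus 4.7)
The plan is to reduce the (possibly unbounded) eigenvalue problem $F(z_0)h_0=ah_0$, $a\in\dR$, to a boundary eigenvalue problem for a holomorphic family of contractions via the Cayley transform, and then to exploit a one-line application of the scalar maximum modulus principle together with the Cauchy--Schwarz inequality. First, I would introduce
\[
 U(z):=(F(z)-i)(F(z)+i)^{-1}=I-2i\,(F(z)+i)^{-1},\quad z\in\mathbb C_+.
\]
Since $F(z)$ is maximal dissipative on $\mathbb C_+$, the resolvent $(F(z)+i)^{-1}$ is bounded with norm at most $1$ and, by (NF3), holomorphic in $z\in\mathbb C_+$; consequently $U(\cdot)$ is a holomorphic $\cB(\cH)$-valued contraction. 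A direct computation converts the hypothesis $F(z_0)h_0=ah_0$ into $U(z_0)h_0=\alpha h_0$ with $\alpha=(a-i)/(a+i)$, $|\alpha|=1$.

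The heart of the argument would be to study the scalar holomorphic function $\phi(z):=(U(z)h_0,h_0)$ on $\mathbb C_+$. By Cauchy--Schwarz and $\|U(z)\|\le 1$ one has $|\phi(z)|\le\|h_0\|^2$, while the previous step yields $\phi(z_0)=\alpha\|h_0\|^2$, so $|\phi(z_0)|=\|h_0\|^2$. The scalar maximum modulus principle then forces $\phi(z)\equiv\alpha\|h_0\|^2$ throughout $\mathbb C_+$. Saturation of Cauchy--Schwarz, $\|h_0\|^2=|\phi(z)|\le\|U(z)h_0\|\,\|h_0\|\le\|h_0\|^2$, then forces $U(z)h_0$ to be a scalar multiple of $h_0$, and the exact value of $\phi(z)$ identifies that multiple as $\alpha$. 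Hence $U(z)h_0=\alpha h_0$ for every $z\in\mathbb C_+$.

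Finally, inverting the Cayley transform, the identity $U(z)h_0=\alpha h_0$ is equivalent to $(F(z)+i)^{-1}h_0=(a+i)^{-1}h_0$, which in turn places $(a+i)^{-1}h_0\in\ran(F(z)+i)^{-1}=\dom F(z)$ and gives $F(z)h_0=ah_0$, proving the first claim. The equality $\ker(F(z)-a)=\ker(F(z_0)-a)$ is then immediate by swapping the roles of $z_0$ and $z$. The main obstacle I anticipate is the domain bookkeeping for unbounded $F\in R(\cH)$: namely verifying cleanly that $U(z)=I-2i(F(z)+i)^{-1}$ is everywhere defined, contractive, and holomorphic on all of $\mathbb C_+$, and that the inverse Cayley step genuinely places $h_0$ in $\dom F(z)$ rather than merely in a closure. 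Both checks are routine consequences of maximal dissipativity combined with the explicit formula for $U(z)$, but they are precisely what makes the argument work uniformly within $R(\cH)$ without any additional hypothesis such as strong holomorphy or boundedness of $F_I$.
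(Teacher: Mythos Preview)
Your argument is correct and essentially parallels the paper's, with only cosmetic differences in packaging. The paper reduces to $a=0$, works with the resolvent $G(z):=-(F(z)+i)^{-1}\in R[\cH]$ (contractive, holomorphic), and applies the maximum principle for the \emph{harmonic} function $h(z)=(G_I(z)u_0,u_0)\le 1$, which attains the value $1$ at $z_0$; from $h\equiv 1$ and $I-G_I(z)\ge 0$ one gets $G_I(z)u_0=u_0$, and then contractivity of $G$ forces $G(z)u_0=iu_0$, i.e.\ $F(z)u_0=0$. Your Cayley transform $U(z)=I+2iG(z)$ is an affine image of the paper's $G$, and your use of the maximum \emph{modulus} principle for the holomorphic scalar $\phi(z)=(U(z)h_0,h_0)$ followed by equality in Cauchy--Schwarz is the holomorphic counterpart of the paper's harmonic argument. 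Both routes handle the domain bookkeeping the same way (maximal dissipativity gives $(F(z)+i)^{-1}\in\cB(\cH)$ holomorphic, and the inverse Cayley step lands $h_0$ in $\dom F(z)$), so neither offers a real advantage over the other; the paper's choice simply keeps the presentation thematically aligned with the Harnack/harmonic-function machinery used elsewhere in the section.
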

   \begin{proof}
Since $F(\cdot) - a\in R(\cH)$ for any $a\in \mathbb R$, we can
assume without loss of generality that $a=0$. Let us put $G(\cdot) :
= -\bigl(F(\cdot)+ i\bigr)^{-1}$. Since $F(z)$ is $m$-dissipative
for $z\in \mathbb C_+$, $G(\cdot) \in R[\cH]$. Moreover, due to the
classical estimate
\[
\|G(z)\| = \|\bigl(F(z) + i\bigr)^{-1}\| \le 1, \qquad z\in \C_+,  
\]
$G(\cdot)$ is a contractive holomorphic operator-valued function in
$\mathbb C_+.$
%
%
%
 Hence its  imaginary part $G_I(\cdot)$  is also contractive, $0\le G_I(z) \le 1,\  z\in\mathbb C_+$.
Further, let us assume that  $u_0 \in \ker\bigl(F(z_0)\bigr)$ and
for definiteness $\|u_0\|=1$.  Then   $h(\cdot) :=
\bigl(G_I(\cdot)u_0,u_0\bigr)$ is a scalar nonnegative contractive
harmonic function in $\mathbb C_+$. Moreover,  since
$\bigl(F(z_0)+i\bigr)u_0 = iu_0$   we get
    \begin{equation*}
G_I(z_0)u_0=u_0\quad \text{and}\quad   h(z_0) = \bigl(G_I(z_0)u_0,u_0\bigr) = \|u_0\|^2 = 1.
    \end{equation*}
According to the Maximum  Principle applied to  the contractive
harmonic function $h(\cdot)$, one gets  $h(z) = h(z_0) = 1$,
$z\in\mathbb C_+$. Rewriting this identity in the form
      \begin{equation*}
\bigl((I - G_I(z))u_0, u_0\bigr) = 0,\quad  z\in\mathbb C_+,
      \end{equation*}
and noting that  $I-G_I(z)\ge 0$, we derive  $G_I(z)u_0=u_0,\
z\in\mathbb C_+$. Since $G(\cdot)$ is contractive, the previous
identity yields   $G(z)u_0 = iu_0$, i.e. $F(z)u_0=0$ for
$z\in\mathbb C_+$.
\end{proof}
   \begin{corollary}
Assume the conditions of Proposition \ref{prop6.5} and let $\cH_0
:=\ker\bigl(F(i)-a\bigr)$. Then $\cH = \cH_0\oplus\cH_1$  and
$F(\cdot)$ admits the following orthogonal decomposition
    \begin{equation*}
F(z)=aI_{\cH_0}\oplus F_a(z),
      \end{equation*}
where $F_a(\cdot)\in R(\cH_1)$ and $\ker\bigl(F_a(z)-a\bigr)=\{0\},\
z\in\mathbb C_+$.
 \end{corollary}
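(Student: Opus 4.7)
The plan is to show that the subspace $\cH_0 = \ker(F(i)-a)$ reduces $F(z)$ for every $z\in\cmr$, then to identify the compression $F_a(z)$ of $F(z)$ to the orthogonal complement $\cH_1 = \cH_0^\perp$ and verify that it belongs to $R(\cH_1)$.

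First I would invoke Proposition~\ref{prop6.5} to conclude that $\ker(F(z)-a) = \cH_0$ for every $z\in\mathbb{C}_+$; by the symmetry (NF2), $F(\bar z) = F(z)^*$, and an analogous argument applied to $F(\bar z)$, the same identity holds on $\mathbb{C}_-$. In particular $\cH_0 \subset \dom F(z)$ and $F(z)\uphar \cH_0 = a I_{\cH_0}$ for all $z\in\cmr$.

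Second, to establish the reduction, decompose any $f \in \dom F(z)$ as $f = f_0 + f_1$ with $f_0 = P_0 f \in \cH_0$ and $f_1 = P_1 f \in \cH_1$, where $P_0$, $P_1$ are the orthogonal projections onto $\cH_0$, $\cH_1$. Since $\cH_0 \subset \dom F(z)$, also $f_1 = f - f_0 \in \dom F(z)$. Using $F(\bar z) = F(z)^*$ together with $F(\bar z)u_0 = a u_0$ for all $u_0\in\cH_0$, one computes
\begin{equation*}
 (F(z) f_1, u_0) = (f_1, F(\bar z) u_0) = a (f_1, u_0) = 0, \qquad u_0 \in \cH_0,
\end{equation*}
so $F(z)f_1 \in \cH_1$. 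This yields the block-diagonal decomposition $F(z) = a I_{\cH_0} \oplus F_a(z)$, where $F_a(z) := P_1 F(z)\uphar\bigl(\dom F(z) \cap \cH_1\bigr)$.

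Third, I would verify properties (NF1)--(NF3) of Definition~\ref{def:Rfunc} for $F_a(\cdot)\in R(\cH_1)$ by transferring them from $F(\cdot)$: closedness, maximal dissipativity on $\mathbb{C}_+$ (resp.\ accumulativity on $\mathbb{C}_-$), the symmetry $F_a(\bar z)^* = F_a(z)$, and the holomorphy of $(F_a(z)+w)^{-1}$ for appropriate $w$ all follow immediately from the orthogonal splitting and the corresponding properties of $F(\cdot)$, since the summand $a I_{\cH_0}$ is a bounded selfadjoint operator. Finally, if $f_1 \in \cH_1 \cap \dom F_a(z)$ satisfies $F_a(z)f_1 = a f_1$, then $F(z)f_1 = a f_1$, so $f_1 \in \cH_0 \cap \cH_1 = \{0\}$, giving $\ker(F_a(z) - a) = \{0\}$. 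No serious obstacle is expected; the main point is the computation that $\cH_1$ is invariant under $F(z)$, which relies only on Proposition~\ref{prop6.5} and the symmetry (NF2).
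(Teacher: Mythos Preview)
The paper states this corollary without proof, treating it as an immediate consequence of Proposition~\ref{prop6.5}; your proposal correctly supplies the omitted details along the natural route (invariance of the kernel, reduction of $F(z)$ by $\cH_0$, and transfer of (NF1)--(NF3) to the compression $F_a$).

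The only step that deserves an extra word is the assertion that $u_0\in\dom F(\bar z)$ and $F(\bar z)u_0=au_0$ for $z\in\dC_+$, which you need for the identity $(F(z)f_1,u_0)=(f_1,F(\bar z)u_0)$; your phrase ``an analogous argument applied to $F(\bar z)$'' is a little vague here. The clean justification is that an $m$-dissipative operator with a real eigenvalue $a$ shares the corresponding eigenvector with its adjoint (equivalently, the contractive Cayley transform $C(z)=(F(z)-i)(F(z)+i)^{-1}$ satisfies $C(z)u_0=\alpha u_0$ with $|\alpha|=1$, hence $C(z)^*u_0=\bar\alpha u_0$, which unwinds to $F(z)^*u_0=au_0$). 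Alternatively you can simply invoke Corollary~\ref{NFCor}(i), which already asserts $\ker(\cF(z)-a)=\ker(\cF(z_0)-a)$ for all $z\in\cmr$, not just $z\in\dC_+$. With this point secured, the rest of your argument is complete.
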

  \begin{proposition}\label{prop_continuous_spec}
Let $F\in R(\cH)$ satisfy the conditions of Proposition
\ref{prop_form_stability}, and let $a =
\overline{a}\in\sigma_c\bigl(F(z_0)\bigr)$ for some $z_0\in\mathbb
C_+$. Then
  \begin{equation*}
a\in\sigma_c\bigl(F(z)\bigr)\qquad   \text{for}\qquad  z\in\mathbb
C_+.
  \end{equation*}
Moreover, a quasi-eigen sequence can be chosen to be common for all
$F(z),\ z\in \mathbb C_+.$
        \end{proposition}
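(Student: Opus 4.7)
The plan is to adapt the reduction-to-bounded-case argument from the proof of Proposition~\ref{prop_Harmonic_cont_spec}, with Harnack's inequality now applied to the Cayley transform of the projected function rather than its imaginary part. After shifting, we may assume $a=0$. Since $0\in\sigma_c(F(z_0))$, pick an orthonormal sequence $\{v_k\}\subset\dom F(z_0)$ with $\|F(z_0)v_k\|\to 0$; passing to a subsequence we may assume $\sum\|F(z_0)v_k\|^2<\infty$. Exactly as in the proof of Proposition~\ref{prop_Harmonic_cont_spec}, approximate $\{v_k\}$ by a sequence $\{u_k\}\subset\cD(F)$ which (after a finite replacement) forms a Riesz basis of $\cH_0:=\overline{\mathrm{span}}\{u_k\}$ and satisfies $\sum\bigl(\|u_k-v_k\|^2+\|F(z_0)(u_k-v_k)\|^2\bigr)<\infty$. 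In particular $\sum\|F(z_0)u_k\|^2<\infty$ and $\{u_k\}$ remains a non-compact quasi-eigen sequence for $F(z_0)$ at $0$.

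Let $P_0$ denote the orthogonal projection onto $\cH_0$ and define $F_0(z):=P_0F(z)\lceil(\cH_0\cap\cD(F))$. The Riesz basis estimate combined with $\sum\|F(z_0)u_k\|^2<\infty$ forces $F_0(z_0)\in\cB(\cH_0)$. Once it is verified that $F_0(\cdot)\in R(\cH_0)$ inherits the hypotheses of Proposition~\ref{prop_form_stability} from $F(\cdot)$, Corollary~\ref{cor6.4} gives $F_0(\cdot)\in R[\cH_0]$, so that the problem is reduced to the bounded case on $\cH_0$.

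For the bounded function $F_0$, set $\cC_0(z):=(F_0(z)-i)(F_0(z)+i)^{-1}\in\cS(\cH_0)$ and $w_k:=(F_0(z_0)+i)u_k$. Then $(\cC_0(z_0)+I)w_k=2F_0(z_0)u_k\to 0$ while $\{w_k\}$ is bounded and non-compact. The scalar function
\[
h_k(z):=\mathrm{Re}\bigl((\cC_0(z)+I)w_k,w_k\bigr)
\]
is nonnegative and harmonic on $\C_+$ with $h_k(z_0)\to 0$; Harnack's inequality~\eqref{6.0} forces $h_k(z)\to 0$ for every $z\in\C_+$. The contractivity $\|\cC_0(z)\|\le 1$ yields $\|(\cC_0(z)+I)w_k\|^2\le 2h_k(z)$, hence $(\cC_0(z)+I)w_k\to 0$ for every $z\in\C_+$. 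Using $u_k=\tfrac{1}{2i}(I-\cC_0(z_0))w_k$ together with $\cC_0(z_0)w_k\to -w_k$ and $\cC_0(z)w_k\to -w_k$, a short computation gives $(\cC_0(z)+I)u_k\to 0$ for every $z\in\C_+$. The algebraic identity $F_0(z)(I-\cC_0(z))=i(I+\cC_0(z))$ combined with boundedness of $F_0(z)$ then forces $\|F_0(z)u_k\|\to 0$ for every $z\in\C_+$, proving the assertion inside $\cH_0$.

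The last step is to lift the conclusion from $F_0$ back to $F$, i.e.\ to verify $\|F(z)u_k\|\to 0$ in $\cH$ for every $z\in\C_+$. Decomposing $F(z)u_k=F_0(z)u_k+(I-P_0)F(z)u_k$, the first piece is controlled as above, while the off-diagonal term $(I-P_0)F(z)u_k$ must be shown to vanish. Since $u_k\to 0$ weakly and $F(z)^*=F(\bar z)$ acts on a dense subset, one first obtains $(F(z)u_k,v)\to 0$ for every $v\in\cD(F)$, yielding weak vanishing of $F(z)u_k$, and then combines this with a Harnack-type control on the scalar holomorphic functions $z\mapsto(F(z)u_k,u_k)$ to upgrade to norm convergence. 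The main obstacle is two-fold: (a) verifying that $F_0(\cdot)$ really belongs to $R(\cH_0)$ and satisfies the hypotheses of Proposition~\ref{prop_form_stability} (requiring careful handling of the unbounded domains), and (b) securing the norm-level control of the off-diagonal component $(I-P_0)F(z)u_k$ uniformly in $k$ for each $z\in\C_+$, which rests on the strongly holomorphic property of $F(\cdot)$.
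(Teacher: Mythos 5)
Your reduction to a bounded function on $\cH_0$ is the same as the paper's (repeat the construction from the proof of Proposition~\ref{prop_Harmonic_cont_spec}, then pass to $F_0=P_0F(\cdot)\lceil\cH_0$ and use boundedness at $z_0$ to get $F_0(\cdot)\in R[\cH_0]$), but your treatment of the bounded case is genuinely different, and it is correct. The paper works with the integral representation \eqref{6.14} of $F_0$: Harnack at $z=i$ together with \eqref{6.32} gives $\|B_1u_k\|\to0$ and $\|K_\Sigma u_k\|\to0$, and the Cauchy--Bunyakovskii estimate \eqref{6.36} then controls the real part, $\|(F_0(z)-B_1z-B_0)u_k\|\to0$, with $B_0u_k\to0$ recovered by taking $z=z_0$. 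You instead run everything through the Cayley transform: $h_k(z)=\RE((\cC_0(z)+I)w_k,w_k)$ is nonnegative harmonic, Harnack \eqref{6.0} gives $h_k(z)\to0$, the defect inequality $\|(\cC_0(z)+I)w_k\|^2\le 2h_k(z)$ upgrades this to norm convergence, and the algebra passing to $\|F_0(z)u_k\|\to0$ checks out (with $g_k=(F_0(z)+i)^{-1}u_k$ one has $(I+\cC_0(z))u_k=2F_0(z)g_k\to0$, $u_k=F_0(z)g_k+ig_k$, hence $F_0(z)u_k=F_0(z)F_0(z)g_k+iF_0(z)g_k\to0$ by boundedness of $F_0(z)$). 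This is closer in spirit to the proof of Proposition~\ref{prop6.5} and avoids the operator measure entirely; the paper's route, in exchange, produces the quantitative estimate \eqref{6.36}, which it reuses later for the Schatten--von Neumann results.

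Concerning your two flagged obstacles: (a) the verification that $F_0(\cdot)$ belongs to $R(\cH_0)$ and inherits the hypotheses of Proposition~\ref{prop_form_stability} is also left implicit in the paper (``it is easily seen'' in the proof of Proposition~\ref{prop_Harmonic_cont_spec}), so you are not missing anything the paper supplies. (b) is the delicate point: the paper does not treat the off-diagonal term $(I-P_0)F(z)u_k$ at all; it proves $0\in\sigma_c(F_0(z))$ and then simply writes ``hence $0\in\sigma_c(F(z))$''. Keep in mind that the bare invariance statement $a\in\sigma_c(F(z))$ is already contained in Theorem~\ref{invar}~(v), so what is genuinely at stake in (b) is the ``common quasi-eigen sequence'' claim, and your proposed repair does not close it: Harnack only controls the nonnegative harmonic functions $\IM(F(z)u_k,u_k)$, and weak vanishing of $F(z)u_k$ together with vanishing of that form does not bound $\|(I-P_0)F(z)u_k\|$, since the real part of the off-diagonal component is left uncontrolled. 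So this step remains open in your write-up exactly where it is silently assumed in the paper; apart from that, your argument is a complete and somewhat more elementary alternative for the compressed (bounded) function.
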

   \begin{proof}
Repeating the  procedure  applied in the proof of Proposition
\ref{prop_Harmonic_cont_spec}  one reduces the proof to the case of $F=F_0$ with values
in $\cB(\cH).$
 Therefore $F_0(\cdot)$ admits the  integral representation \eqref{6.14}
%
%
%
%
with  $B_0\ge 0,\  B_1=B^*_1\in[\cH_0]$ and $\Sigma(\cdot)$ being  the
$\cB(\cH_0)$-valued operator measure  satisfying condition \eqref{Measuer_Condition}.
%
%
%
%
Clearly, $K_\Sigma \ge 0$ and  $K_\Sigma\in \cB(\cH_0)$.

Repeating the reasoning of Proposition  \ref{prop_Harmonic_cont_spec}  one shows that
there exists a quasi-eigen sequence for $F_{0}(z_0)$ and such that $\{u_k\}\subset
\cD(F)$, i.e.
   \begin{equation}\label{4.27NevFun}
\lim_{k\to\infty}\|F_{0}(z_0)u_k\|=0.
  \end{equation}

Setting
\[
H(z):= \IM F_0(z)  := (2i)^{-1}\bigl(F_0(z) -
F_0(z)^*\bigr)
\]
one  defines a family of scalar nonnegative harmonic  functions
$h_n(\cdot) := \bigl(H(\cdot)u_n,u_n\bigr)$ in $\mathbb C_+$. It
follows from \eqref{4.27NevFun} that
%
$\lim_{n\to\infty}h_n(z_0) = \lim_{n\to\infty} \bigl(H(z_0)u_n, u_n\bigr) = 0.$
%
%
By Proposition \ref{prop_Harmonic_cont_spec}  similar conclusion
holds   for any $z\in\mathbb C_+$, i.e.
     \begin{equation}\label{6.30}
\lim_{n\to\infty}h_n(z) = \lim_{n\to\infty} \bigl(H(z)u_n, u_n\bigr)
= 0, \qquad z\in\mathbb C_+.
     \end{equation}
On the other hand, it follows from \eqref{Int_rep-n}  with account of
\eqref{Measuer_Condition}  that
    \begin{equation}\label{6.32}
H(i) = -i\bigl(F_0(i) - B_0\bigr) = B_1 + K_{\Sigma}.
    \end{equation}
Combining \eqref{6.30} with \eqref{6.32} and noting that $B_1\ge 0$ and $K_{\Sigma}>0$,
one gets
     \begin{equation}\label{6.33}
\lim_{n\to\infty}\|B_1u_n\| = \lim_{n\to\infty}\|K_{\Sigma}u_n\| = 0.
 \end{equation}
Further, for any fixed $z\in\mathbb C_+$ we set
    \begin{equation}\label{6.34}
c_2(z):=\max_{t\in\mathbb R}\left|\frac{1+z t}{t-z}\right|.
  \end{equation}
Combining  \eqref{Int_rep-n} with  \eqref{Measuer_Condition}, applying the
Cauchy-Bunyakovskii inequality for integrals,  and taking the notation \eqref{6.34} into
account we derive  (cf. \cite[Section 7]{MalMal03})
     \begin{eqnarray*}
|\bigl((F_0(z)- B_1z - B_0)u, v\bigr)|^2
&\le & \left|\int_{\mathbb R}\frac{1+z t}{(t-z)(1+t^2)}\,d \bigl(\Sigma(t)u, v\bigr)\right|^2 \nonumber   \\
&\le & c_2(z)^2 \int_{\mathbb R} \frac{1}{1+t^2}\,d
\bigl(\Sigma(t)u, u\bigr)
\int_{\mathbb R} \frac{1}{1+t^2}\,d \bigl(\Sigma(t)v, v\bigr) \nonumber   \\
&\le & c_2(z)^2 (K_{\Sigma}u, u)(K_{\Sigma}v, v) \nonumber  \\
&= & c_2(z)^2 \|K_{\Sigma}^{1/2}u\|^2\cdot \|K_{\Sigma}^{1/2}v\|^2.
 \end{eqnarray*}
This "weak" estimate  is equivalent to the following "strong" one
   \begin{equation}\label{6.36}
\|\bigl(F_0(z)- B_1z - B_0\bigr)u\| \le
c_2(z)\|K_{\Sigma}^{1/2}\|\cdot \|K_{\Sigma}^{1/2}u\|,\quad
z\in\mathbb C_+.
 \end{equation}
Inserting in this inequality $u=u_n$ and taking into account \eqref{6.33} yields
     \begin{equation}\label{6.41}
\lim_{n\to\infty} \|\bigl(F_0(z) - B_0\bigr)u_n\| = 0, \qquad
z\in\mathbb C_+.
  \end{equation}
Setting here $z=z_0$ and using the assumption $ \lim_{n\to\infty}\|F_0(z_0)u_n\| = 0,$
we get $\lim_{n\to\infty}B_0u_n =0$. Finally,  combining this relation with \eqref{6.41}
implies
     \begin{equation*}
\lim_{n\to\infty}F_0(z)u_n = 0, \quad   z\in\mathbb C_+.
  \end{equation*}
Since the  sequence $\{u_n\}_{n\in \mathbb N}$ is non-compact, the
latter means that $0\in\sigma_c\bigl(F_0(z)\bigr)$.  Hence
$0\in\sigma_c\bigl(F(z)\bigr)$ and the result is proved.
   \end{proof}
%
%
%

%
%
%
%
%
%
%
%
%
\begin{proposition}
Let $F(\cdot)\in R[\cH]$  and $F(z_0)\in \mathfrak S_p(\cH)$ for
some $z_0\in\mathbb C_+$ and $p\in (0, \infty]$. Then $F(\cdot)$
takes values in $ \mathfrak S_p(\cH)$,
  \begin{equation*}
F(\cdot): \mathbb C_+  \to  \mathfrak S_p(\cH).  
 \end{equation*}
\end{proposition}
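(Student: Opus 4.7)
The key identity from~\eqref{eq:Kern_F} is $F(z) - F(z_0)^* = (z - \bar z_0)\,{\mathbf N}_F(z,z_0)$, valid whenever $z, z_0 \in \mathbb C_+$. Since $\mathfrak S_p$ is a symmetric two-sided ideal, the hypothesis $F(z_0) \in \mathfrak S_p$ yields $F(z_0)^* \in \mathfrak S_p$ and hence $\IM F(z_0) \in \mathfrak S_p$, equivalently ${\mathbf N}_F(z_0,z_0) \in \mathfrak S_p$. The plan is to prove ${\mathbf N}_F(z,z_0) \in \mathfrak S_p$ for every $z \in \mathbb C_+$ and read off the conclusion from the identity together with $F(z_0)^* \in \mathfrak S_p$.

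The first step is to propagate Schatten membership along the diagonal of the kernel by Harnack's inequality. For fixed $u \in \cH$ the function $(\IM F(\cdot)u, u)$ is nonnegative and harmonic on $\mathbb C_+$, so~\eqref{6.0} supplies a constant $c_2 = c_2(z, z_0)$, independent of $u$, with
\[
 0 \le (\IM F(z)u, u) \le c_2\,(\IM F(z_0)u, u), \qquad u \in \cH.
\]
This is the operator inequality $0 \le \IM F(z) \le c_2\, \IM F(z_0)$, and monotonicity of singular numbers for nonnegative operators forces $\IM F(z) \in \mathfrak S_p$, i.e.\ ${\mathbf N}_F(z,z) \in \mathfrak S_p$ for every $z \in \mathbb C_+$.

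For the off-diagonal kernel, the fact that ${\mathbf N}_F$ is a nonnegative kernel on $\mathbb C_+$ (as recalled right after~\eqref{eq:Kern_F}) gives the Cauchy--Schwarz bound
\[
 |({\mathbf N}_F(z,z_0)u, v)|^2 \le ({\mathbf N}_F(z,z)u, u)\,({\mathbf N}_F(z_0,z_0)v, v), \qquad u, v \in \cH,
\]
from which a standard Douglas-type factorization produces a contraction $C \in \cB(\cH)$ with
\[
 {\mathbf N}_F(z,z_0) = {\mathbf N}_F(z,z)^{1/2}\,C\,{\mathbf N}_F(z_0,z_0)^{1/2}.
\]
Since both nonnegative factors lie in $\mathfrak S_p$, their square roots lie in $\mathfrak S_{2p}$, and the Schatten H\"older inequality $\mathfrak S_{2p} \cdot \cB(\cH) \cdot \mathfrak S_{2p} \subset \mathfrak S_p$ (valid for all $p \in (0,\infty]$) gives ${\mathbf N}_F(z,z_0) \in \mathfrak S_p$. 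Substituting back, $F(z) - F(z_0)^* \in \mathfrak S_p$, whence $F(z) \in \mathfrak S_p$.

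The main technical point to marshal is the H\"older inequality for Schatten ideals in the quasi-Banach regime $0 < p < 1$; it remains valid there but may warrant an explicit citation. Apart from that, the argument is the short two-step combination --- Harnack on the diagonal, Douglas for the off-diagonal, then Schatten H\"older --- and no manipulation of the integral representation~\eqref{INTrep} is needed.
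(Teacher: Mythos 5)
Your argument is correct, and it takes a genuinely different route from the paper. The paper works through the integral representation: it writes $F(z)=B_0+B_1z+\int(\ldots)\,d\Sigma(t)$ as in \eqref{6.14}, derives from the Cauchy--Bunyakovskii estimate the factorization $F(z)-B_1z-B_0=K_\Sigma^{1/2}T(z)K_\Sigma^{1/2}$ with $\|T(z)\|\le c_2(z)$ (formulas \eqref{6.36}, \eqref{6.44}), then uses Harnack to compare $\IM F(i)=B_1+K_\Sigma$ with $\IM F(z_0)$, obtaining $B_1+K_\Sigma=T_0\,\IM F(z_0)\,T_0^*\in\mathfrak S_p$ with $T_0$ boundedly invertible; monotonicity of singular numbers then puts $B_1$ and $K_\Sigma$ (hence $K_\Sigma^{1/2}\in\mathfrak S_{2p}$) into the ideal, and finally $B_0\in\mathfrak S_p$ by setting $z=z_0$. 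You bypass the representation \eqref{INTrep} entirely: Harnack applied pointwise gives the operator inequality $0\le\IM F(z)\le c_2\,\IM F(z_0)$, positivity of the Nevanlinna kernel \eqref{eq:Kern_F} plus a Douglas-type factorization handles the off-diagonal term ${\mathbf N}_F(z,z_0)$, and the identity $F(z)-F(z_0)^*=(z-\bar z_0){\mathbf N}_F(z,z_0)$ closes the argument. (Minor point: with your ordering of $u,v$ the factorization comes out as ${\mathbf N}_F(z_0,z_0)^{1/2}C\,{\mathbf N}_F(z,z)^{1/2}$, but this is immaterial since both outer factors lie in $\mathfrak S_{2p}$; also $\mathfrak S_p$ is closed under addition for $0<p<1$ via the $p$-triangle inequality, which you implicitly use at the last step.) Your route is shorter and uses the same ingredients in a more intrinsic way---it is essentially the kernel-based analogue of the paper's weak-to-strong estimate, with $\IM F(z_0)^{1/2}$ playing the role of $K_\Sigma^{1/2}$. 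What the paper's longer route buys is structural information it reuses elsewhere: the membership $B_0,B_1,K_\Sigma\in\mathfrak S_p$, the uniform factorization through $K_\Sigma^{1/2}$ (also exploited in the proof of Proposition \ref{prop_continuous_spec}), and the observation of Remark \ref{rem_general_ideal} that the argument extends verbatim to arbitrary two-sided ideals. Your flagged caveat is the right one: the H\"older-type bound $\|ST\|_p\le\|S\|_{2p}\|T\|_{2p}$ in the quasi-normed range $0<p<1$ (a consequence of Horn's inequality $s_{j+k-1}(ST)\le s_j(S)s_k(T)$) should be cited, but it is valid, so there is no gap.
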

\begin{proof}
Since  $F(\cdot)\in R[\cH]$, it  admits integral representation \eqref{6.14}. According
to \eqref{6.36}  the following estimate holds
\[
|\bigl((F(z) - B_1z - B_0)u, v\bigr)|   \le
c_2(z)\|K_{\Sigma}^{1/2}u\|\cdot \|K_{\Sigma}^{1/2}v\|,\quad u,v \in
\cH, \quad   z\in\mathbb C_+,
\]
where $K_{\Sigma}\ge 0$ is  a nonnegative bounded operator in $\cH$
given by \eqref{Measuer_Condition}. This estimate is equivalent to
the following representation
  \begin{equation}\label{6.44}
F(z) - B_1z - B_0 = K^{1/2}_{\Sigma}T(z)K^{1/2}_{\Sigma}, \quad
z\in\mathbb C_+,
  \end{equation}
where $T(z)$ is an operator-valued function with values in
$\cB(\cH)$ and $\|T(z)\|\le c_2(z)$.

On the other hand, setting as above  $H(z):= \IM F(z) := F_I(z)$,
applying the Harnack's inequality \eqref{6.0}, and taking \eqref{6.32}
into account, we obtain
   \begin{eqnarray*}
C_1\bigl(H(z_0)u, u\bigr) &\le & \bigl(H(i)u, u\bigr) = \bigl(B_1u, u\bigr) + \bigl(K_{\Sigma}u, u\bigr) \nonumber   \\
&\le & C_2\bigl(H(z_0)u, u\bigr), \qquad  u\in\cH.
\end{eqnarray*}
It follows that there exists an operator $T_0 \in\cB(\cH)$ with bounded
inverse $T^{-1}_0\in\cB(\cH)$ and such that
  \begin{equation*}
B_1 + K_{\Sigma} = T_0 H(z_0)T_0^* = T_0 F_I(z_0)T_0^*\in\mathfrak
S_p(\cH).
  \end{equation*}
Since both operators $B_1$ and $K_{\Sigma}$ are nonnegative, one
gets
     \begin{equation*}
s_j(B_1) = {\lambda}_j(B_1) \le {\lambda}_j(B_1+K_{\Sigma}) =
s_j(B_1 + K_{\Sigma}), \quad j\in\mathbb N.
  \end{equation*}
Hence $B_1\in\mathfrak S_p(\cH)$ and $K_{\Sigma}\in \mathfrak
S_p(\cH)$. Combining these inclusion with \eqref{6.44} we get
\[
F(z)- B_0\in \mathfrak S_p(\cH).
\]
Setting here $z=z_0$, yields $B_0\in\mathfrak S_p(\cH)$. Thus, $F(z)
\in \mathfrak S_p(\cH)$ for any $z\in\mathbb C_+$.
   \end{proof}
%
%
\begin{corollary}
Let $F(\cdot)\in R[\cH]$  and $F_I(z_0)\in \mathfrak S_p(\cH)$ for
some $z_0\in\mathbb C_+$ and $p\in (0, \infty]$. Then $F_I(\cdot)$
takes values in $ \mathfrak S_p(\cH)$,
  \begin{equation*}
F(\cdot): \mathbb C_+  \to  \mathfrak S_p(\cH).  
 \end{equation*}
\end{corollary}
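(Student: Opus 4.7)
The plan is to mimic the proof of the preceding proposition, but track the imaginary part of $F$ instead of $F$ itself. The starting point will again be the integral representation \eqref{6.14} and the decomposition identity \eqref{6.44} established therein, together with the Harnack-type two-sided estimate already derived in the previous argument.

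First I would use the two-sided Harnack inequality for the nonnegative harmonic $\cB(\cH)$-valued function $H(\cdot) = F_I(\cdot)$, which shows (exactly as in the previous proof) that there is an invertible operator $T_0\in\cB(\cH)$ with $T_0^{-1}\in\cB(\cH)$ such that
\[
F_I(i) = B_1 + K_\Sigma = T_0\, F_I(z_0)\, T_0^*.
\]
Since $\mathfrak S_p(\cH)$ is a two-sided symmetrically normed ideal of $\cB(\cH)$, the hypothesis $F_I(z_0)\in\mathfrak S_p(\cH)$ then gives $B_1+K_\Sigma\in\mathfrak S_p(\cH)$. Because $B_1,K_\Sigma\geq 0$, the standard singular-number estimate $s_j(B_1)=\lambda_j(B_1)\leq\lambda_j(B_1+K_\Sigma)=s_j(B_1+K_\Sigma)$ (and similarly for $K_\Sigma$) yields both
\[
B_1 \in \mathfrak S_p(\cH)\quad\text{and}\quad K_\Sigma \in \mathfrak S_p(\cH).
\]

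Next, for arbitrary $z\in\C_+$ I would take the imaginary part of the decomposition \eqref{6.44}. Since $B_0,B_1,K_\Sigma^{1/2}$ are all selfadjoint, this gives
\[
F_I(z) - y\, B_1 = K_\Sigma^{1/2}\,\bigl(\IM T(z)\bigr)\,K_\Sigma^{1/2},\qquad z=x+iy\in\C_+,
\]
where $\|\IM T(z)\|\leq\|T(z)\|\leq c_2(z)$. Since $K_\Sigma\in\mathfrak S_p(\cH)$, we have $K_\Sigma^{1/2}\in\mathfrak S_{2p}(\cH)$, and the Hölder inequality for Schatten ideals forces $K_\Sigma^{1/2}(\IM T(z))K_\Sigma^{1/2}\in\mathfrak S_p(\cH)$. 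Combined with $y\,B_1\in\mathfrak S_p(\cH)$, this gives $F_I(z)\in\mathfrak S_p(\cH)$ for every $z\in\C_+$, as required.

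No step looks like a serious obstacle: the proof is essentially a bookkeeping exercise on top of the identities \eqref{Int_rep-n}, \eqref{6.32}, \eqref{6.36} and \eqref{6.44} already established in the preceding proposition. The only mildly delicate point is the extraction of $B_1,K_\Sigma\in\mathfrak S_p(\cH)$ from their sum being in $\mathfrak S_p(\cH)$, which is handled by the monotonicity of singular values on the cone of nonnegative operators; everything else is a direct consequence of the ideal property of $\mathfrak S_p(\cH)$ and the Hölder inequality for Schatten norms.
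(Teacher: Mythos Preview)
Your proof is correct. The paper states this corollary without proof, so there is nothing to compare against directly; your argument is the natural adaptation of the preceding proposition's proof and all steps go through as you describe.

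That said, there is a shorter route which is presumably why the paper leaves it as an unproved corollary: you can bypass the decomposition \eqref{6.44} entirely. Applying the scalar Harnack inequality \eqref{6.0} to each of the nonnegative harmonic functions $z\mapsto (F_I(z)u,u)$ yields the operator inequality
\[
0 \le F_I(z) \le c_2(z_0,z)\, F_I(z_0), \qquad z\in\C_+,
\]
and then the very same singular-value monotonicity you invoke, $s_j(F_I(z))=\lambda_j(F_I(z))\le c_2\,\lambda_j(F_I(z_0))=c_2\,s_j(F_I(z_0))$, gives $F_I(z)\in\mathfrak S_p(\cH)$ in one line. Your detour through \eqref{6.44} and the H\"older inequality for Schatten classes is correct but unnecessary here, since for the imaginary part (unlike for $F$ itself) the ``$B_0$-problem'' never arises: the Harnack bound alone already dominates $F_I(z)$ by an $\mathfrak S_p$-operator.
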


\begin{remark}\label{rem_general_ideal}
The result is valid for any two-sided ideal $\mathfrak S(\cH)$
instead of $\mathfrak S_p(\cH)$. In particular, for any $F(\cdot)\in
R[\cH]$  the following implication holds
    \begin{equation*}
s_j\bigl(F(z_0)\bigr)= O(j^{-1/p}) \Longrightarrow  s_j\bigl(F(z)\bigr)= O(j^{-1/p}),
\quad z\in \C_+.
    \end{equation*}
\end{remark}

    \section{Examples}
     \begin{example}
Let $\varphi(\cdot)$  be  a scalar $R$-function and $\cH =L^2(0, \infty).$ Consider an
operator-valued function $F_\varphi(\cdot)$ given  by
\[
F_\varphi(z) u = -\frac{d^2u}{dx^2},\qquad  \dom (F_\varphi(z)) =
\{u\in W^2_2(\mathbb R_+) :\ u'(0) = \varphi(z)u(0)\}, \quad z \in
\C_+.
\]
Clearly, $F(\cdot)\in R(\cH)$  and
$$
\mathcal D(F) := \cap_{z\in\mathbb C_+}\dom F(z) = W^2_{2,0}(\mathbb
R_+) := \{u\in W^2_{2}(\mathbb R_+): u(0)= u'(0)=0\}
$$
is dense in $\cH$. The corresponding family of closed  quadratic forms reads as follows
   \begin{equation}\label{quadr_form}
{F}_{\varphi(z)} [u] =  \int_{\R_+}|u'(x)|^2dx + \varphi(z)|u(0)|^2,
\quad u\in \dom ({F}_{\varphi(z)}) = W^1_2(\mathbb R_+),  \quad z
\in \mathbb C_+.
   \end{equation}
However, the imaginary parts of these forms constitute a family of
non-closable (singular) forms
  \begin{equation*}
{\mathfrak t}_{\varphi(z)} [u] := \IM {F}_{\varphi(z)} [u] =
\IM{\varphi(z)}\cdot|u(0)|^2, \quad  u\in W^1_2(\mathbb R_+), \quad
z \in \mathbb C_+.
     \end{equation*}
In accordance with Proposition  \ref{prop_form_stability}  they  are non-closable for
all $z\in \C_+$ simultaneously.

On the other hand, taking the real part  of the form \eqref{quadr_form} one gets
  \begin{equation}\label{real_part_form}
{\mathfrak r}_{\varphi(z)} [u]  :=  \RE {F}_{\varphi(z)} [u] =
\int_{\R_+}|u'(x)|^2dx + \RE{\varphi(z)}\cdot|u(0)|^2, \quad  u\in
W^1_2(\mathbb R_+), \quad z \in \mathbb C_l.
     \end{equation}
If  $\varphi(\cdot)\in S_+$, then this  form is nonnegative for each $z\in \C_l,$ hence
$F_\varphi(\cdot)\in S_+(\cH)$. This example demonstrates Proposition
\ref{prop_form_stability}  applied to the real part of $F_\varphi(\cdot)$ in place  of
its  imaginary part: the form ${\mathfrak r}_{\varphi(z)}$ is closed for each $z\in
\C_l,$
$$
\cH_{+,r}(z) = \cH_{+,r}(z_0) = W^1_2(\mathbb R_+), \ z\in\mathbb
C_l, \quad \text{and}\quad \mathcal D_r[F] = W^1_2(\mathbb R_+).
$$
Moreover, the operator associated with  the form
\eqref{real_part_form} is given by
\[
F_{\varphi(z),R}(z) u = -\frac{d^2u}{dx^2},\quad  \dom
(F_{\varphi(z),R}(z)) = \{u\in W^2_2(\mathbb R_+) : u'(0) =
(\RE\varphi(z))u(0)\}, \  z \in \C_l.
\]
In accordance with  Definition \ref{def_imaginary_part} (in fact, with its real
counterpart) $F_{\varphi(z),R}(\cdot)$ is the real part of the function
$F_{\varphi(z)}(\cdot)\in S_+(\cH)$.

It is easily seen that $a\in\sigma_c(F_\varphi(z))$ for each $z\in \C\setminus \R_+$ and
$a\ge0.$ This fact correlates with Propositions \ref{prop_continuous_spec}
 for $z\in \C_+$ and  $z\in \C_l$, respectively.
       \end{example}
%
%
     \begin{example}
Let $\varphi(\cdot)$   and $\cH$ be as above.  Define  an operator-valued function
$G_\varphi(\cdot)$  by
\[
G_\varphi(z) f = -i\frac{d^2u}{dx^2},\qquad  \dom (G_\varphi(z)) =
\{u\in W^2_2(\mathbb R_+) :\ u'(0) = \varphi(z)u(0)\}, \quad z \in
\C_+.
\]
Clearly, $\rho(G_\varphi(z))\not = \emptyset$ for each  $z \in
\C_+.$ Furthermore,  the corresponding family of quadratic forms
have the description
    \begin{equation}\label{6.7}
{G}_{\varphi(z)} [u] =  i\int_{\R_+}|u'(x)|^2dx + \varphi(z)|u(0)|^2, \quad u\in \dom
{G}_{\varphi(z)} = \dom (G_\varphi(z)).
    \end{equation}
It follows that the form ${G}_{\varphi(z)} [\cdot]$  is dissipative
for each $z \in \C_+$, hence $G(\cdot)\in R(\cH)$  and $\mathcal
D(G) = W^2_{2,0}(\mathbb R_+)$ is dense in $\cH$. Taking  imaginary
part in \eqref{6.7} we get
$$
{\mathfrak t}_{\varphi(z)}[u] := \IM {G}_{\varphi(z)}[u] =
\int_{\R_+}|u'(x)|^2dx + \IM \varphi(z)|u(0)|^2, \quad \dom
{\mathfrak t}_{\varphi(z)} = \dom (G_\varphi(z)).
$$
This  form is   closable   and  its  closure  is  given by
$$
{\overline {\mathfrak t}}_{\varphi(z)}[u] := \IM {G}_{\varphi(z)}[u]
= \int_{\R_+}|u'(x)|^2dx + \IM \varphi(z)|u(0)|^2, \quad \dom
{\overline {\mathfrak t}}_{\varphi(z)} = W^1_2(\mathbb R_+),  \  z
\in \mathbb C_+.
$$
The latter is in accordance with Proposition  \ref{prop_form_stability}:
$$
\cH_+(z) = W^1_2(\mathbb R_+), \quad  z\in\mathbb C_+, \quad
\text{and}\quad  \mathcal D[G_{\varphi}] = W^1_{2}(\mathbb R_+).
$$
The operator associated with the form ${\overline {\mathfrak
t}}_{\varphi(z)}$ (the imaginary part of the operator
$G_{\varphi}(z)$ in the sense of Definition
\ref{def_imaginary_part}) is given by
\[
G_{\varphi(z),I} u = - \frac{d^2u}{dx^2},\quad  \dom
(G_{\varphi(z),I}) = \{u\in W^2_2(\mathbb R_+): u'(0) =  (\IM
\varphi(z)) u(0)\}, \  z \in \C_+.
\]
According to Proposition \ref{prop_Harmonic_cont_and_point_spec},
$a\in\sigma_c(G_{\varphi(z),I})$ for each $z\in \C_+$ and  $a\ge0.$
Moreover, by Proposition \ref{prop_continuous_spec},
$0\in\sigma_c(G_{\varphi(z)})$ for each $z\in \C_+.$
   \end{example}
%
%
     \begin{example}\label{ex_finite_int-val}
Let $\varphi(\cdot)$  be  a scalar $R$-function and $\cH =L^2(0, 1).$ Consider an
operator-valued function $G_\varphi(\cdot)$ given  by
\[
G_\varphi(z) u = -i\frac{d^2u}{dx^2},\quad  \dom (G_\varphi(z)) = \{u\in W^2_2(0,1) :\
u'(0) = \varphi(z)u(0),\  u(1)=0\}, \quad z \in \C_+.
\]
It is easily seen that $\rho(G_\varphi(z))\not = \emptyset$  for each  $z \in \C_+$ and
the operator $G_\varphi(z)$ has discrete spectrum. Moreover, the corresponding quadratic
form is
    \begin{equation}\label{6.7B}
{G}_{\varphi(z)} [u] =  \int_0^1 |u'(x)|^2\,dx + \varphi(z)|u(0)|^2, \quad u\in \dom
{G}_{\varphi(z)} = \dom (G_\varphi(z)).
    \end{equation}
Clearly, the form is dissipative, hence  $G(\cdot)\in R(\cH)$  and
$$
\mathcal D(G) := \cap_{z\in\mathbb C_+}\dom G(z) =  \{u\in
W^2_{2}(0,1): u(0)= u'(0)=u(1) =0\}
$$
is dense in $\cH$. Taking imaginary part in \eqref{6.7B}  one
obtains a nonnegative closable form ${\mathfrak
t}_{\varphi(z)}[\cdot]$ defined on $\dom (G_\varphi(z))$. Its
closure is given by
  $$
{\overline {\mathfrak t}}_{\varphi(z)}[u] := \IM {G}_{\varphi(z)}[u]
= \int_0^1 |u'(x)|^2dx + \IM \varphi(z)|u(0)|^2, \quad \dom
({\overline {\mathfrak t}}_{\varphi(z)}) = {\widetilde
W}^2_{2,0}(0,1), \  z \in \mathbb C_+,
  $$
where ${\widetilde W}^2_{2,0}(0,1) := \{u\in W^1_2(0,1) :\  u(1)=0\}$.

The latter is in accordance with Proposition  \ref{prop_form_stability}:
$$
\cH_+(z) = {\widetilde W}^2_{2,0}(0,1), \quad  z\in\mathbb C_+,
\quad \text{and}\quad \mathcal D[G_{\varphi}] = {\widetilde
W}^2_{2,0}(0,1).
$$

The operator associated with the form ${\overline {\mathfrak
t}}_{\varphi(z)}$ (the imaginary part of $G_{\varphi}(z)$) is given
by
\[
G_{\varphi,I}(z) u = - \frac{d^2u}{dx^2},\   \dom (G_{\varphi,I})(z) = \{u\in
W^2_2(\mathbb R_+): u'(0) -  (\IM \varphi(z)) u(0)= u(1)=0\}. 
\]
Since the spectrum of $G_{\varphi}(z)$ is discrete,  $\sigma_c(G_{\varphi}(z)) =
\sigma_c(G_{\varphi,I}(z))=\emptyset$ for each $z\in \C_+$. This fact is in accordance
with Propositions \ref{prop_continuous_spec} and
\ref{prop_Harmonic_cont_and_point_spec}.

Moreover, the estimate $s_j((G_{\varphi(z)})^{-1}) = O(j^{-2})$, $j\in \N$, holds for
each $z\in \C_+.$ This fact correlates with Remark  \ref{rem_general_ideal}.
     \end{example}

\end{document}